\author{Matthew D. Kvalheim}
\address{Department of Electrical and Systems Engineering, University of Pennsylvania, Philadelphia, PA 19104}
\author{Shai Revzen}
\address{Department of Electrical Engineering and Computer Science, Ecology and Evolutionary Biology Department, Robotics Institute, University of Michigan, Ann Arbor, MI 48109}
\email{kvalheim@seas.upenn.edu, shrevzen@umich.edu}
\title[Existence and uniqueness of global Koopman eigenfunctions]{Existence and uniqueness of global Koopman eigenfunctions for stable fixed points and periodic orbits}
\newcommand{\concept}[1]{\textit{#1}}
\newcommand{\N}{\mathbb{N}}
\newcommand{\Z}{\mathbb{Z}}
\newcommand{\R}{\mathbb{R}}
\newcommand{\Grp}{\mathbb{T}}
\newcommand{\C}{\mathbb{C}}
\newcommand{\Ws}{W^s}
\newcommand{\Sym}{\textnormal{Sym}}
\newcommand{\slot}{\,\cdot\,} 
\newcommand{\T}{\mathsf{T}}
\newcommand{\D}{\mathsf{D}}
\newcommand{\id}{\textnormal{id}}
\newcommand{\interior}{\textnormal{int}}
\newcommand{\Lip}[1]{\textnormal{Lip}(#1)}
\newcommand{\GL}{\mathsf{GL}}
\newcommand{\CL}{C_{\textnormal{loc}}}
\newcommand{\CLKA}{C_{\textnormal{loc}}^{k,\alpha}}
\newcommand{\CKA}{C^{k,\alpha}}
\newcommand{\A}{\mathcal{A}}
\newcommand{\F}{\mathcal{F}}
\newcommand{\Ll}{\mathcal{L}}
\DeclarePairedDelimiter\norm{\lVert}{\rVert}
\newtheorem{Lem}{Lemma}
\newtheorem{Th}{Theorem}
\newtheorem{Prop}{Proposition}
\newcommand{\thistheoremname}{}
\newtheorem*{genericthm}{\thistheoremname}
{\renewcommand{\thistheoremname}{Theorem~\ref{#1}$'$}%
	\begin{genericthm}}
	{\end{genericthm}}
\theoremstyle{definition}
\newtheorem{Def}{Definition}
\newtheorem*{Def*}{Definition}
\newtheorem{Ex}{Example}
\newtheorem{Rem}{Remark}
\newtheorem*{Not}{Notation}
\begin{document}

	\begin{abstract}
		We consider $C^1$ dynamical systems having an attracting hyperbolic fixed point or periodic orbit and prove existence and uniqueness results for $C^k$ (actually $\CLKA$) linearizing semiconjugacies---of which Koopman eigenfunctions are a special case---defined on the entire basin of attraction.
        Our main results both generalize and sharpen Sternberg's $C^k$ linearization theorem for hyperbolic sinks, and in particular
        our corollaries include uniqueness statements for Sternberg linearizations and Floquet normal forms.  
        Using our main results we also prove new existence and uniqueness statements for $C^k$ Koopman eigenfunctions, including a complete classification of $C^\infty$ eigenfunctions assuming a $C^\infty$ dynamical system with semisimple and nonresonant linearization.
        We give an intrinsic definition of ``principal Koopman eigenfunctions'' which generalizes the definition of Mohr and Mezi\'{c} for linear systems, and which includes the notions of ``isostables'' and ``isostable coordinates'' appearing in work by Ermentrout, Mauroy, Mezi\'{c}, Moehlis, Wilson, and others. 
        Our main results yield existence and uniqueness theorems for the principal eigenfunctions and isostable coordinates and also show, e.g., that the (a priori non-unique) ``pullback algebra'' defined in \cite{mohr2016koopman} is unique under certain conditions.
        We also discuss the limit used to define the ``faster'' isostable coordinates in \cite{wilson2018greater,monga2019phase} in light of our main results. 
	\end{abstract}

	\maketitle

	\tableofcontents
	
	\section{Introduction} 		
	This paper fills a significant technical gap between the linearization results known from classical dynamical systems theory---e.g., the linearization theorems of Poincar\'{e}-Siegel \cite{poincare1879proprietes,siegel1942iteration,siegel1952uber}, Sternberg \cite{sternberg1957local}, Grobman-Hartman \cite{grobman1959homeomorphism,hartman1960lemma}, and Hartman \cite{hartman1960local}---and the growing interest in applied fields such as engineering and fluid dynamics in using linearizations based on Koopman theory.\footnote{Here ``linearization'' refers to a nonlinear change of coordinates in which a nonlinear dynamical system becomes \emph{exactly} linear, and is distinct from approximate linearization. A recent paper extending and discussing some of the state of the art is \cite{newhouse2017differentiable}.}
	Motivated largely by data-driven applications, this ``applied Koopmanism'' literature has experienced a surge of interest initiated by \cite{mezic1994geometrical,mezic2004comparison,mezic2005spectral} more than 70 years after Koopman's seminal work \cite{koopman1931hamiltonian}.\footnote{See, e.g., \cite{budivsic2012applied,mauroy2012use,mauroy2013isostables,lan2013linearization,mohr2014construction,giannakis2015spatiotemporal,mezic2015applications,williams2015data,mauroy2016global,mohr2016koopman,brunton2016koopman,surana2016koopman, surana2016linear,arbabi2017ergodic, arbabi2017study,kaiser2017data,mezic2019spectrum,proctor2018generalizing,korda2018convergence,korda2018data,korda2019optimal,das2019delay,bruder2019nonlinear,dietrich2019koopman,arbabi2019data}.}

	The practical application of computational Koopman eigenfunction representations of dynamical systems is grounded in (i) eigenfunction existence theorems based on such classical linearization theorems \cite{lan2013linearization,mauroy2013isostables} and (ii) uniqueness theorems applying only to analytic eigenfunctions for certain analytic dynamical systems \cite{mauroy2013isostables}.
    Existence and uniqueness results are desirable since, in analyzing the theoretical properties of any algorithm for computing some quantity, it is desirable to know whether the computation is well-posed \cite{hadamard1902problemes}, and in particular whether the quantity in question \emph{exists} and is \emph{uniquely determined}.

    The results in the present paper yield new precise conditions under which various quantities in the applied Koopmanism literature---including targets of numerical algorithms---exist and are unique, and are especially relevant to work on \concept{principal eigenfunctions} and \concept{isostables} for point attractors \cite{mohr2016koopman,mauroy2013isostables} and to work on \concept{isostable coordinates} for periodic orbit (limit cycle) attractors \cite{wilson2016isostable,shirasaka2017phase,wilson2018greater,monga2019phase}.
    Isostables and isostable coordinates are useful tools for nonlinear model reduction, and it has been proposed that they could prove useful in real-world applications such as treatment design for Parkinson's disease, migraines, cardiac arrhythmias \cite{wilson2016isostable-pde}, and jet lag \cite{wilson2014energy}.	
	
	\subsection{Nontechnical overview of results}\label{sec:flavor}
	This paper was motivated by the following three questions. (A $C^k$ function is one which has continuous mixed partial derivatives up to order $k$.)	
	\begin{description}
	\item[Eigenfunction uniqueness] When---and in what sense---are $C^k$ ($1\leq k \leq +\infty$) Koopman eigenfunctions unique?
	\item[Linearization uniqueness] When---and in what sense---are full $C^k$ ($1\leq k \leq +\infty$) linearizing coordinate changes unique?
	\item[Eigenfunction existence] Can the existence of specific $C^k$ ($2\leq k \leq +\infty$) Koopman eigenfunctions be guaranteed under assumptions which are weaker than those needed to invoke a classical result guaranteeing a full set of linearizing coordinates exists? 
	\end{description}
	We provide answers to each of these three questions for eigenfunctions and coordinate changes defined on the basin of an attracting hyperbolic fixed point or periodic orbit.
	To the best of our knowledge, our answers to the eigenfunction uniqueness and existence questions are new.
	During the review process of this paper we discovered that an answer to the full linearization uniqueness question for the case $k=+\infty$ (equivalent to our answer in that case) can also be obtained from results in the recent book \cite[Thm~6.8.21,~6.8.22]{fisher2019hyperbolic}. 

	Sternberg's work showed that, for $C^k$ ($1\leq k\leq +\infty$) dynamical systems such as those arising from a $C^k$ ordinary differential equation $$\frac{d}{dt}x(t) = f(x(t)) \qquad x(t)\in \R^n$$ having an attracting fixed point $x_0\in \R^n$, there is an intimate relationship between the eigenvalues of the derivative $\D_{x_0}f$ of $f$ at $x_0$ and the existence of $C^k$ linearizing coordinates defined near $x_0$ \cite{sternberg1957local}.
	Our results establish a similarly intimate relationship between certain eigenvalues and the answers to our three questions.
	In the case of the eigenfunction uniqueness and existence questions, we show that the corresponding relationship is less restrictive than in Sternberg's case. 
	In particular, the answer to the eigenfunction existence question is ``yes'' (especially in the case of the ``slowest'' principal eigenfunctions/isostable coordinates).
	The answer to the linearization uniqueness question is this: under Sternberg's hypotheses guaranteeing that $C^k$ linearizing coordinates exist in the vicinity (hence also on the entire basin \cite{lan2013linearization,kvalheim2018global}) of an attracting hyperbolic fixed point, these coordinates are uniquely determined by their derivatives at the fixed point.	
	
	To obtain the answers to our three questions, we prove a general result on the existence and uniqueness of $C^k$ linearizing \emph{semi}conjugacies (partial linearizations), of which Koopman eigenfunctions and linearizing conjugacies are special cases.
    To provide more refined answers to the same three questions, we further examine the $\CLKA$ smoothness classes refining the familiar $C^k$ smoothness classes, wherein $C^k = \CL^{k,0}$. 
    Our main results concern the existence and uniqueness of linearizing semiconjugacies defined on the basin of an attracting hyperbolic fixed point or periodic orbit.
    Notable consequences worked out in this paper, for dynamics in the basin of an attracting hyperbolic fixed point or periodic orbit, include: 
    \begin{itemize}
    \item a fairly tight relationship between properties of the system eigenvalues/Floquet multipliers and the uniqueness of $\CLKA$ \concept{principal} eigenfunctions for $C^1$ dynamics (Proposition \ref{prop:koopman-cka-fix}) or $\CLKA$ dynamics (Proposition \ref{prop:koopman-cka-per});
    \item sufficient conditions for the existence of such $\CLKA$ principal eigenfunctions when the dynamics are $\CLKA$ (Propositions \ref{prop:koopman-cka-fix} and \ref{prop:koopman-cka-per});
    \item sufficient conditions for when such $\CLKA$ eigenfunctions (or isostable coordinates) can be constructed by limiting procedures such as \concept{Laplace averages} (Propositions \ref{prop:koopman-cka-fix} and \ref{prop:koopman-cka-per} and Remark \ref{rem:laplace});
    \item a full classification of \emph{all} $C^\infty$ eigenfunctions in terms of principal eigenfunctions for $C^\infty$ dynamics satisfying a nondegeneracy assumption (Theorems \ref{th:classify-point} and \ref{th:classify-per}); and
    \item sufficient conditions under which a full set of linearizing $C^k$ coordinates exist and are uniquely determined by their first-order approximation (Propositions \ref{prop:sternberg} and \ref{prop:floq-norm-form}).   
    \end{itemize}
    
    To the best of our knowledge, our uniqueness results for principal Koopman eigenfunctions are the first uniqueness results known for non-analytic eigenfunctions.
    Similarly, our classification of \emph{all} $C^\infty$ Koopman eigenfunctions appears to be the first such classification theorem for non-analytic eigenfunctions.
    While certain existence results for principal Koopman eigenfunctions defined on the basin of an attracting hyperbolic equilibrium or periodic orbit have been known for some time \cite{lan2013linearization, mauroy2013isostables}, we believe our new existence results to be the strongest known for $C^k$ eigenfunctions with $2\leq k \leq +\infty$.
    This is because prior existence results (cf. \cite[Sec.~5, 7, 8]{mezic2019spectrum}) construct $C^k$ eigenfunctions by pulling back linear eigenfunctions through the linearizing conjugacies provided by the Sternberg or Poincar\'{e}-Siegel linearization theorems mentioned above, and the full hypotheses of one of these linearization theorems must be assumed in order to invoke it; in contrast, our existence result for \emph{specific} principal eigenfunctions requires much weaker assumptions.
    These assumptions are extremely mild in the case of the ``slowest'' principal eigenfunctions/isostable coordinates; the following subsection contains a precise statement (with more details in Remark \ref{rem:isostable}) as part of a more technical overview of our results.
    
	\subsection{Technical overview of results and organization of the paper}
	In this paper, we consider $C^1$ dynamical systems $\Phi\colon Q\times \Grp \to Q$ for which $Q$ is the basin of attraction of a stable hyperbolic fixed point or periodic orbit.
	Here $Q$ is a smooth manifold, either $\Grp = \Z$ or $\Grp = \R$, and $\Phi$ could be the restriction of a dynamical system defined on a larger space (e.g., $\R^n$) to some basin of attraction $Q$. 
	That $\Phi$ is a \concept{dynamical system} means that $\Phi^0 = \id_Q$ and $\Phi^{t+s}=\Phi^t\circ \Phi^s$ for all $t,s\in \Grp$, where $\Phi^t\coloneqq \Phi(\slot,t)$; in particular, it follows that $\Phi^t\colon Q\to Q$ is a $C^1$ diffeomorphism with inverse $\Phi^{-t}$ for any $t\in \Grp$. 
	When $\Grp = \R$, $\Phi$ is called a \concept{flow}; a common example is that of $t\mapsto \Phi^t(x_0)$ being the solution to the initial value problem $$\frac{d}{dt}x(t) = f(x(t)),\qquad x(0) = x_0$$ determined by a complete $C^1$ vector field $f$ on $Q$.
	Our main contributions are existence and uniqueness results regarding $\CLKA$ linearizing semiconjugacies $\psi\colon Q\to \C^m$ defined on the entire basin of attraction $Q$, where we do not assume that $m$ has any relationship to the dimension of $Q$; in particular, $\psi$ need not be a diffeomorphism or a homeomorphism. 
	By definition, such a semiconjugacy makes the diagram
	\begin{equation}\label{eq:cd-gen-efunc}
	\begin{tikzcd}
	&Q \arrow{r}{\Phi^t}\arrow{d}{\psi}&Q\arrow{d}{\psi}\\
	&\C^m\arrow{r}{e^{tA}}&\C^m
	\end{tikzcd}
	\end{equation}
    commute for some $A\in \C^{m\times m}$ and all $t\in \Grp$.
    By $\CLKA$ with $k\in \N_{\geq 1}$ and $0\leq \alpha \leq 1$, we mean that $\psi\in C^k(Q,\C^m)$ and that all $k$-th partial derivatives of $\psi$ are locally $\alpha$-H\"{o}lder continuous in local coordinates, and by definition $\CL^{\infty,\alpha}\coloneqq \CL^{+\infty,\alpha}\coloneqq C^\infty$.
    We note that $\CL^{k,0} = C^k$, so the reader uninterested in H\"{o}lder continuity can simply keep in mind the case $\CL^{k,0}=C^k$ and the fact that every $C^{k+1}$ function is also $\CLKA$ for every $0\leq \alpha \leq 1$.
    Our motivation for including local H\"{o}lder continuity of derivatives is that, once it is included, our main results become fairly close to optimal, at least in the interesting case $m=1$ of Koopman eigenfunctions $\psi$ (see Examples \ref{ex:thm-sharp} and \ref{ex:koop-converge}).  
    
    Linearizing semiconjugacies are also known as linearizing \concept{factors} or \concept{factor maps} in the literature and can be viewed as a further generalization of the \concept{generalized Koopman eigenfunctions} of \cite{mezic2019spectrum,korda2019optimal}.
    We note that such semiconjugacies are distinct from those in the diagram
	\begin{equation}\label{eq:cd-param-method}
	\begin{tikzcd}
	&Q \arrow{r}{\Phi^t}&Q\\
	&\C^m\arrow{r}{e^{tA}}\arrow{u}{K}&\C^m\arrow{u}{K}
	\end{tikzcd}
	\end{equation}    
    obtained from \eqref{eq:cd-gen-efunc} by flipping the vertical arrows (although the diagrams are equivalent if, e.g., $\psi$ and $K$ are diffeomorphisms).
    In \eqref{eq:cd-param-method} $K$ is a factor of $e^{tA}$, whereas $\psi$ is a factor of $\Phi^t$ in \eqref{eq:cd-gen-efunc}.
    Existence results for semiconjugacies of the type in \eqref{eq:cd-param-method} were obtained by \cite{cabre2003parameterization1,cabre2003parameterization2,cabre2005parameterization3} in the context of proving invariant manifold results using the parameterization method.
    
    Our main result for the case of an attracting hyperbolic fixed point both generalizes and sharpens Sternberg's linearization theorem \cite[Thms~2,3,4]{sternberg1957local} which provides conditions ensuring the existence of a linearizing local $C^k$ diffeomorphism defined on a neighborhood of the fixed point; the results of \cite{lan2013linearization,kvalheim2018global} show that this local diffeomorphism can be extended to a $C^k$ diffeomorphism $\psi \colon Q\to \R^n\subset \C^n$ defined on the entire basin of attraction $Q$ making \eqref{eq:cd-gen-efunc} commute. 
    Under Sternberg's conditions, a corollary of our main result is that this global linearizing diffeomorphism is in fact uniquely determined by its derivative at the fixed point (cf. \cite[Thm~6.8.21,~6.8.22]{fisher2019hyperbolic} for the case $k=+\infty$).
    Additionally, we sharpen  Sternberg's result from $C^k$ to $\CLKA$ linearizations.
    For the case of an attracting hyperbolic periodic orbit of a flow, our main result also yields a similar existence and uniqueness corollary for the Floquet normal form, a nonlinear change of coordinates in which the dynamics become the product of a linear system with constant-rate rotation on a circle \cite{abraham1967transversal,smoothInvariant,abraham2001manifolds}.
    We remark that the Floquet normal form is a \emph{nonlinear} generalization of the comparatively well-known classical Floquet theory of \emph{linear} time-periodic systems \cite[Sec.~III.7]{hale1980ode}
    
    Using our two main results, we make the following contributions to the theory of Koopman eigenfunctions.
    We give an intrinsic definition of principal eigenfunctions for nonlinear dynamical systems which generalizes the definition for linear systems in \cite{mohr2016koopman}.
    We provide existence and uniqueness results for $\CLKA$ principal eigenfunctions, and we also show that the (a priori non-unique) ``pullback algebra'' defined in \cite{mohr2016koopman} is unique under certain conditions.
    For the case of periodic orbit attractors, principal eigenfunctions essentially coincide with the notion of isostable coordinates defined in  \cite{wilson2018greater, monga2019phase}, except that the definition in these references involves a limit which might not exist except for the ``slowest'' isostable coordinate.
    Our techniques shed light on this issue, and our results imply that this limit does in fact always exist for the ``slowest'' isostable coordinate if the dynamical system is at least smoother than $C^{1,\alpha}_{\textnormal{loc}}$ with $\alpha > 0$.
    In fact, our results imply---assuming that there is a unique and algebraically simple ``slowest'' Floquet multiplier which is real---that a corresponding ``slowest'' $\CL^{1,\alpha}$ isostable coordinate with $\alpha > 0$ always exists and is unique modulo scalar multiplication for a $\CL^{1,\alpha}$ dynamical system (e.g., a $C^2$ dynamical system), without the need for any nonresonance or spectral spread assumptions. 
    Similarly, if instead there is a unique and algebraically simple ``slowest'' pair of Floquet multipliers which are complex conjugates, then a corresponding ``slowest'' complex conjugate pair of $\CL^{1,\alpha}$ isostable coordinates always exists and is unique modulo scalar multiplication for a $\CL^{1,\alpha}$ dynamical system with $\alpha > 0$.
    As a final application of our main results, we give a complete classification of $C^\infty$ eigenfunctions for a $C^\infty$ dynamical system with semisimple (diagonalizable over $\C$) and nonresonant linearization, generalizing known results for analytic dynamics and analytic eigenfunctions \cite{mauroy2013isostables,mezic2019spectrum}. 
    
    The remainder of the paper is organized as follows.    
    We explain notation and terminology below to be used in the sequel.
    After some definitions, in \S \ref{sec:main-results} we state Theorems \ref{th:main-thm} and \ref{th:main-thm-per}, our two main results, without proof.
    We also state a proposition on the uniqueness of linearizing factors which does not assume any nonresonance conditions. 
    As applications we derive in \S \ref{sec:applications} several results which are essentially corollaries of this proposition and the two main theorems.
    \S \ref{sec:app:stern-floq} contains existence and uniqueness theorems for global Sternberg linearizations and Floquet normal forms.
    In \S \ref{sec:app-p-eigs} we define principal Koopman eigenfunctions and isostable coordinates for nonlinear dynamical systems and discuss how Theorems \ref{th:main-thm} and \ref{th:main-thm-per} yield corresponding existence and uniqueness results.
    We then discuss the relationship between various notions defined in \cite{mohr2016koopman} and our definitions, and we also discuss the convergence of the isostable coordinate limits in \cite{wilson2018greater,monga2019phase}.
    \S \ref{sec:app-classify} contains our theorem which completely classifies the $C^\infty$ eigenfunctions of $C^\infty$ dynamical systems on the basin of an attracting hyperbolic fixed point or periodic orbit.
    Finally, \S \ref{sec:proofs-main-results} contains the proofs of Theorems \ref{th:main-thm} and \ref{th:main-thm-per}.
    
    \subsection{Notation and terminology}\label{sec:notation-and-terminology}
        In this paper we employ the following mostly standard notation and terminology.    
    
    \subsubsection{Sets of numbers} We denote the real numbers by $\R$, complex numbers by $\C$, integers by $\Z$, and nonnegative integers by $\N$.
        Given $c\in \R$ and $S\subset \R$, we define $S_{\geq c}\coloneqq \{s\in S\colon s\geq c\}$ and $S_{> c}\coloneqq \{s\in S\colon s > c\}$ so that, e.g., $\Z_{\geq 0} = \N_{\geq 0}=\N$.
        
     \subsubsection{Linear algebra} 
     Given $m\in \N_{\geq 1}$, we denote by $\GL(m,\C)\subset \C^{m\times m}$ the invertible $m\times m$ matrices with entries in $\C$ and by  $\GL(m,\R)\subset \GL(m,\C)$ those with entries in $\R$.
     Given $A\in \C^{m\times m}$, we denote by $\textnormal{spec}(A)\subset \C$ the set of eigenvalues of $A$; given $A\in \R^{m\times m}$, we denote by $\textnormal{spec}(A)\subset \C$ the set of eigenvalues of $A\in \R^{m\times m}\subset \C^{m\times m}$ when viewed as a complex matrix.
     If $A\colon V\to V$ is a linear self-map with $V$ a complex vector space, $\textnormal{spec}(A)\subset \C$ also denotes the eigenvalues of $A$.
     If $V$ is a real vector space, then its complexification $V_\C$ is the complex vector space given by all formal linear combinations of vectors in $V$ with complex coefficients (cf. \cite[p.~64]{hirsch1974differential}); if $A\colon V\to W$ is an $\R$-linear map between real vector spaces, then the complexification $A_\C\colon V_\C\to W_\C$ of $A$ is the unique $\C$-linear extension of $A$ (cf. \cite[p.~65]{hirsch1974differential}), and if $W=V$ we define $\textnormal{spec}(A)\coloneqq\textnormal{spec}(A_\C)\subset \C$.
        If $E_1, E_2\subset V$ are linear subspaces of a real or complex vector space $V$, we say that $E_1$ and $E_2$ are complementary if $V = \{e_1+e_2\colon e_1\in E_1, e_2\in E_2\}$ and if $E_1\cap E_2 = \{0\}$.                  
    
    	\subsubsection{Derivatives} 
    	Given a differentiable map $F\colon M\to N$ between smooth manifolds, we use the notation $\D_x F$ for the derivative of $F$ at the point $x\in M$.
    	(Recall that the derivative $\D_x F\colon \T_x M \to \T_{F(x)}N$ is a linear map between tangent spaces \cite{lee2013smooth}, which can be identified with the Jacobian of $F$ evaluated at $x$ in local coordinates.)
       	In particular, given a dynamical system $\Phi\colon Q\times \Grp \to Q$ and fixed $t \in \Grp$, we write $\D_{x}\Phi^t\colon \T_{x}Q\to \T_{\Phi^t(x)}Q$ for the derivative of the time-$t$ map $\Phi^t\colon Q\to Q$ at the point $x\in Q$.
       	A map $F\colon M\to N$ satisfies $F\in C^k(M,N)$ with $k\in \N_{\geq 0}\cup \{+\infty\}$, or briefly $F\in C^k$, if every $x\in M$ is contained in a coordinate chart in which all mixed partial derivatives of $F$ of order less than $k+1$ exist and are continuous.       	
       	For convenience, we define the $0$-th derivative $\D^0_x F\coloneqq F(x)$ to coincide with $F$ for all $x\in M$.   
       	
       	Several of our results include conditions such as ``$\D_{x_0}^i F = 0$ for all integers $0\leq i < r$.''       	
       	This is to be interpreted to mean that, in local coordinates, all mixed partial derivatives of $F$ of order less than $r$ vanish at $x_0$. 
       	This can be made more formal in the following way.
       	Inductively, if $i\geq 2$ and $\D_{x_0}^jF\colon (\T_{x_0}M)^{\otimes j}\to \T_{F(x_0)}N$ is well-defined and zero for all $1\leq j\leq i-1$, then the $i$-th derivative $\D^i_{x_0}F\colon (\T_{x_0}M)^{\otimes i}\to \T_{F(x_0)}N$ is a well-defined linear map from the $i$-th tensor power $(\T_{x_0}M)^{\otimes i}$ to $\T_{F(x_0)}N$
       	represented in local coordinates by the $(1+i)$-dimensional array of $i$-th partial derivatives of $F$ evaluated at $x_0$.\footnote{To define $\D_{x_0}^i F$, use local coordinates. The inductive assumption that the first $i-1$ derivatives are well-defined and zero at $x_0$ ensures that the result is independent of the choice of local coordinates, hence well-defined.}

	\section{Main results}\label{sec:main-results}
	Before stating our main results, we give two definitions which are essentially asymmetric versions of some appearing in \cite{sternberg1957local,sell1985smooth}.
	When discussing eigenvalues and eigenvectors of a matrix or linear self-map (endomorphism) in the remainder of the paper, we are always discussing eigenvalues and eigenvectors of its complexification, although we do not always make this explicit.
	\begin{Def}[$(X,Y)$ $k$-nonresonance]\label{def:nonres}Let $X \in \C^{d\times d}$ and $Y \in \C^{n\times n}$ be matrices with eigenvalues $\mu_1,\ldots,\mu_d$ and $\lambda_1,\ldots,\lambda_n$, respectively, repeated with multiplicities.
		For any $k \in \N_{\geq 1} \cup \{+\infty\}$, we say that $(X,Y)$ is \concept{$k$-nonresonant} if, for any $i\in \{1,\ldots, d\}$ and any $m=(m_1,\ldots, m_n) \in \N^n_{\geq 0}$ satisfying $2\leq m_1 + \cdots + m_n < k+1$, \begin{equation}\label{eq:nonres-lem}
		\mu_i \neq \lambda_1^{m_1}\cdots \lambda_n^{m_n}.
		\end{equation}	
		(Note this condition vacuously holds if $k = 1$; i.e., any two matrices are $1$-nonresonant.)  
		We also extend the definition of $k$-nonresonance to general linear self-maps $X, Y$ of finite-dimensional complex vector spaces by identifying $X, Y$ with their matrix representations with respect to any choice of bases.
		We say that linear self-maps $(X,Y)$ of finite-dimensional real vector spaces are $k$-nonresonant if the complexifications $(X_\C,Y_\C)$ are $k$-nonresonant.
	\end{Def}
	For the definition below, recall that the spectral radius $\rho(X)$ of a matrix is defined to be the largest modulus (absolute value) of the eigenvalues of (the complexification of) $X$.
	\begin{Def}[$(X,Y)$ spectral spread] 
		\label{def:nonres-spread}
		Let $X\in \GL(m,\C)$ and $Y\in \GL(n,\C)$ be invertible matrices with the spectral radius $\rho(Y)$ satisfying $\rho(Y) < 1$. 
		We define the spectral spread $\nu(X,Y)$ to be
		\begin{equation}\label{eq:spread-def}
		\begin{split}
		\nu(X,Y)&\coloneqq \max_{\substack{\mu \in \textnormal{spec}(X)\\ \lambda \in \textnormal{spec}(Y)}}\frac{\ln(|\mu|)}{\ln(|\lambda|)}\\
		&= \min\left\{r\in \R\colon \left(\min_{\mu\in \textnormal{spec}(X)}|\mu|\right)\geq \left(\max_{\lambda\in \textnormal{spec}(Y)}|\lambda|^r\right)\right\}\\
		&= \min\left\{r\in \R\colon  \rho(X^{-1})\left(\rho(Y)\right)^r\leq 1\right\}.
		\end{split}
		\end{equation}
		We also extend the definition of $\nu(X,Y)$ to general linear automorphisms $X, Y$ of finite-dimensional complex vector spaces by identifying $X, Y$ with their matrix representations with respect to any choice of bases.	
		We extend the definition to linear automorphisms $(X,Y)$ of real vector spaces by defining $\nu(X,Y)\coloneqq \nu(X_\C,Y_\C)$ to be the spectral spread of the complexifications.	
	\end{Def}		
	The second line in \eqref{eq:spread-def} follows since $\ln(|\mu|)/\ln(|\lambda|)\leq r$ if and only if $\ln(|\mu|)\geq r \ln(|\lambda|)$, with the inequality flipping because $\ln(|\lambda|)<0$ since $|\lambda|\leq \rho(Y)<1$, and this in turn holds if and only if $|\mu|\geq |\lambda|^r$.
	The third line follows from the definition of the spectral radius $\rho(Y)$.
	Figure \ref{fig:spread} illustrates the condition $\nu(e^{A},\D_{x_0} \Phi^1) < k + \alpha$ in Theorem \ref{th:main-thm} below.		
	Finally, we now recall the definition of $\CL^{k,\alpha}$ functions.
	\begin{Def}[$\CL^{k,\alpha}$ functions]\label{def:ckal-funcs}
		Let $M,N$ be smooth manifolds of dimensions $m$ and $n$, let $\psi\in C^k(M,N)$ be a $C^k$ map $\psi\colon M\to N$ with $k\in \N_{\geq 0}$, and let $0\leq \alpha \leq 1$.
		We will say that $\psi \in \CL^{k,\alpha}(M,N)$ if for every $x\in M$ there exist charts $(U_1,\varphi_1)$ and $(U_2,\varphi_2)$ containing $x$ and $\psi(x)$ such that all $k$-th partial derivatives of $\varphi_2 \circ \psi \circ \varphi_1^{-1}$ 
		are H\"{o}lder continuous with exponent $\alpha$.
		If $k=+\infty$, we use the convention $\CL^{+\infty,\alpha}(M,N)\coloneqq C^\infty(M,N)$ for any $0\leq \alpha \leq 1$.
		If the domain and codomain $M$ and $N$ are clear from context, we will sometimes write $C^k$ and $\CL^{k,\alpha}$ instead of $C^k(M,N)$ and $\CL^{k,\alpha}(M,N)$ and write, e.g., $\psi \in C^k$ or $\psi\in \CLKA$.
		We note that $\CL^{k,\beta}\subset \CL^{k,\alpha}$ for any $k\in \N\cup \{+\infty\}$ and $0\leq \alpha \leq \beta \leq 1$, and that $\psi\in \CL^{k,0}$ if and only if $\psi \in C^k$.
	\end{Def}
	
	\begin{Rem}
		Using the chain rule and the fact that compositions and products of locally $\alpha$-H\"{o}lder continuous functions are again locally $\alpha$-H\"{o}lder, it follows that the property of being $\CLKA$ on a manifold does not depend on the choice of charts in Definition \ref{def:ckal-funcs}.
	\end{Rem}	
	
	We now state our main results, Theorems \ref{th:main-thm} and \ref{th:main-thm-per}, as well as Proposition \ref{prop:uniqueness-without-nonresonance}.
	An example and several clarifying remarks follow the statement of Theorem \ref{th:main-thm}; in particular, see Remark \ref{rem:intuition} for intuitive remarks and Example \ref{ex:intuition} for concreteness.
	Simple analytic (counter-)examples demonstrating Theorems \ref{th:main-thm} and \ref{th:main-thm-per} in the case $m=1$ of Koopman eigenfunctions---demonstrating in particular the sharpness of the uniqueness statement---are Examples \ref{ex:thm-sharp} and \ref{ex:koop-converge} of \S \ref{sec:app-p-eigs}.

	We emphasize that Theorems \ref{th:main-thm} and \ref{th:main-thm-per} do not assume that the linear map $B$ is invertible, and do not claim anything about the semiconjugacy $\psi\colon Q\to \C^m$ being a diffeomorphism (but see Propositions ~\ref{prop:sternberg} and \ref{prop:floq-norm-form}); moreover, nothing is assumed about the relationship of $m$ to $\dim(Q)$.	
	\begin{figure}
		\centering
		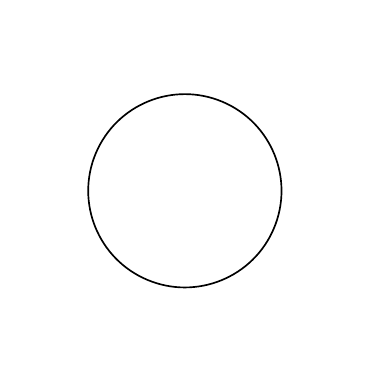
		\caption{An illustration of the condition $\nu(e^{A},\D_{x_0} \Phi^1) < k + \alpha$ of Theorem \ref{th:main-thm}. 
		This condition is equivalent to every eigenvalue of $\D_{x_0} \Phi^1$ (represented by an ``$\times$'' above) belonging to the open disk with radius given by raising the smallest modulus of the eigenvalues of $e^A$ to the power $\frac{1}{k+\alpha}$. }\label{fig:spread}
	\end{figure}	
	
	\begin{restatable}[Existence and uniqueness of $\CLKA$ global linearizing factors for a point attractor]{Th}{ThmMain}\label{th:main-thm}
	Let $\Phi\colon Q \times \Grp \to Q$ be a $C^{1}$ dynamical system with $Q$ the basin of an  attracting hyperbolic fixed point $x_0\in Q$,  where $Q$ is a smooth manifold with $\dim(Q)\geq 1$ and either $\Grp = \Z$ or $\Grp = \R$. 
	Let $m\in \N_{\geq 1}$ and $e^{A}\in \GL(m,
	\C)$ have spectral radius $\rho(e^{A})<1$, and let the linear map $B\colon \T_{x_0}Q\to \C^m$ satisfy
	\begin{equation}\label{eq:main-th-1}
	\forall t \in \Grp\colon B \D_{x_0}\Phi^t = e^{tA} B.
	\end{equation}
	Fix $k\in \N_{\geq 1}\cup \{+\infty\}$ and $0 \leq \alpha \leq 1$, assume that $(e^{A},\D_{x_0} \Phi^1)$ is $k$-nonresonant, and assume that $\nu(e^{A},\D_{x_0} \Phi^1) < k + \alpha$. 	
	
	\emph{\textbf{Uniqueness.}} 
	Any $\psi \in \CLKA(Q,\C^m)$ satisfying 
	\begin{equation*}
	\psi \circ \Phi^1 = e^{A} \psi, \qquad \D_{x_0} \psi = B
	\end{equation*}
	is unique, and if $B\colon \T_{x_0}Q\to \R^m\subset \C^m$ and $e^A\in \GL(m,\R)\subset \GL(m,\C)$ are real, then $\psi\colon Q\to \R^m \subset \C^m$ is real.
	
	\emph{\textbf{Existence}.} If furthermore $\Phi\in \CLKA$, then such a unique $\psi \in \CLKA(Q,\C^m)$ exists and additionally satisfies
	\begin{equation}\label{eq:main-th-3}
	\forall t \in \Grp\colon \psi \circ \Phi^t = e^{tA} \psi.
	\end{equation}	
	In fact, if $P\in \CLKA(Q,\C^m)$ is any ``approximate linearizing factor'' satisfying $\D_{x_0}P = B$ and 
	\begin{equation}\label{eq:main-th-4}
	P \circ \Phi^1 = e^A P + R
	\end{equation} 
	with $\D_{x_0}^i R = 0$ for all integers $0\leq i < k+\alpha$, then 
	\begin{equation}\label{eq:main-th-5}
	\psi = \lim_{t\to \infty} e^{-tA} P \circ \Phi^t
	\end{equation}
	in the topology of $C^{k,\alpha}$-uniform convergence on compact subsets of $Q$ if $k < +\infty$, and in the topology of $C^{k'}$-uniform convergence on compact subsets of $Q$ for any $k'\in \N_{\geq 1}$ if $k=+\infty$.
	\end{restatable}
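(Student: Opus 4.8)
The plan is to prove \textbf{uniqueness} first and then invoke it as a black box inside the \textbf{existence} proof. Throughout write $S := \D_{x_0}\Phi^1$, so that $\rho(S) < 1$ by hyperbolicity together with global attraction, and note that $\eqref{eq:main-th-1}$ at $t = 1$ reads $BS = e^A B$, whence $BS^n = e^{nA}B$ for all $n \geq 0$. For uniqueness, suppose $\psi_1,\psi_2 \in \CLKA(Q,\C^m)$ both satisfy $\psi_i\circ\Phi^1 = e^A\psi_i$ with $\D_{x_0}\psi_i = B$, and put $\delta := \psi_1 - \psi_2$, so $\delta\circ\Phi^1 = e^A\delta$, $\D_{x_0}\delta = 0$, $\delta \in \CLKA$. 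I would first show $\D^i_{x_0}\delta = 0$ for $0 \leq i \leq k$: the case $i = 0$ holds because $I - e^A$ is invertible ($\rho(e^A) < 1$), and for $2 \leq i \leq k$, assuming $\D^j_{x_0}\delta = 0$ for $j < i$ and differentiating $\delta\circ\Phi^1 = e^A\delta$ exactly $i$ times at $x_0$ via the chain rule, every term involving a lower-order derivative of $\delta$ drops and one is left with the Sylvester-type identity $\D^i_{x_0}\delta \circ S^{\otimes i} = e^A\circ \D^i_{x_0}\delta$; the operator $T \mapsto e^{-A}\circ T\circ S^{\otimes i}$ on $i$-linear maps has spectrum $\{\mu^{-1}\lambda_{j_1}\cdots\lambda_{j_i}\}$, which omits $1$ for $2\leq i\leq k$ precisely by $k$-nonresonance, forcing $\D^i_{x_0}\delta = 0$. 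Thus $\delta$ vanishes to order $k$ at $x_0$ with $\D^k\delta$ locally $\alpha$-H\"older, so in a chart centered at $x_0$ one has $\norm{\delta(x)} \leq \norm{x}^k\,\omega(\norm{x})$ for some modulus $\omega$ with $\omega(0^+) = 0$ and $\omega(r) \lesssim r^\alpha$ when $\alpha > 0$. Iterating $\delta = e^{-A}(\delta\circ\Phi^1)$ gives $\delta(x) = e^{-nA}\bigl(\delta(\Phi^n(x))\bigr)$ for all $x$ and $n$; since $x_0$ is a globally attracting hyperbolic fixed point, $\norm{\Phi^n(x)} \leq C_\epsilon(\rho(S)+\epsilon)^n$ for large $n$, while $\norm{e^{-nA}} \leq C_\epsilon(\rho(e^{-A})+\epsilon)^n$, so $\norm{\delta(x)} \leq C_\epsilon(\rho(e^{-A})+\epsilon)^n(\rho(S)+\epsilon)^{n(k+\alpha)}$; unwinding Definition~\ref{def:nonres-spread}, the logarithmic decay rate of this bound is negative exactly when $k + \alpha > \nu(e^A,S)$, so $\delta \equiv 0$. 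Granting uniqueness, the remaining clauses are formal: when $A,B$ are real $\overline{\psi}$ is again such a solution, so $\psi = \overline{\psi}$; and when $\Grp = \R$, for each fixed $t$ the map $e^{-tA}(\psi\circ\Phi^t)$ is again a $\CLKA$ solution with derivative $B$ at $x_0$ (using $\Phi^t\circ\Phi^1 = \Phi^1\circ\Phi^t$ and $\eqref{eq:main-th-1}$), hence equals $\psi$, which is $\eqref{eq:main-th-3}$.

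For existence, now assuming $\Phi \in \CLKA$ and $\nu(e^A,S) < k + \alpha$, I would first record that an approximate linearizing factor $P$ exists: in a chart at $x_0$ one solves the homological equations for the jet of $P$ at $x_0$ — orders $0,1$ being fixed by $P(x_0) = 0$ and $\D_{x_0}P = B$ (compatible with $\eqref{eq:main-th-1}$), orders $2 \leq i \leq k$ being uniquely solvable by $k$-nonresonance exactly as above — then realizes this jet by a $\CLKA$ map (Borel's lemma when $k = \infty$) and extends it to $Q$; the resulting $R$ in $\eqref{eq:main-th-4}$ then has $\D^i_{x_0}R = 0$ for all integers $i < k+\alpha$. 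Given any such $P$, set $\psi_n := e^{-nA}(P\circ\Phi^n)$. From $\eqref{eq:main-th-4}$ one obtains the telescoping identity $\psi_{n+1} - \psi_n = e^{-(n+1)A}(R\circ\Phi^n)$. On a compact $K$, using $\norm{\D^iR(y)} \lesssim \norm{y}^{\,k+\alpha-i}$ near $x_0$ with $[\D^kR]_{C^{0,\alpha}}$ bounded, the standard derivative-of-iterate estimates $\norm{\D^j_x\Phi^n}_{C^0(K)} \lesssim (\rho(S)+\epsilon)^n$ for each $j\geq 1$ (proved on a small neighborhood in a norm adapted to $S$, then transferred to $K$ since $\Phi^{n_0}(K)$ eventually enters that neighborhood), and a chain-rule (Fa\`a di Bruno) expansion of $\D^\bullet(R\circ\Phi^n)$, every contribution to $\norm{R\circ\Phi^n}_{C^{k,\alpha}(K)}$ is $\lesssim (\rho(S)+\epsilon)^{n(k+\alpha)}$; hence $\norm{\psi_{n+1}-\psi_n}_{C^{k,\alpha}(K)} \lesssim (\rho(e^{-A})+\epsilon)^n(\rho(S)+\epsilon)^{n(k+\alpha)}$, which is summable precisely because $\nu(e^A,S) < k+\alpha$. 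So $\psi_n$ converges in $C^{k,\alpha}$ uniformly on compacts to some $\psi \in \CLKA(Q,\C^m)$, giving $\eqref{eq:main-th-5}$ along $\Grp\cap\N$. Passing to the limit in $\psi_n\circ\Phi^1 = e^A\psi_{n+1}$ yields $\psi\circ\Phi^1 = e^A\psi$, and $\D_{x_0}\psi_n = e^{-nA}BS^n = B$ for every $n$ yields $\D_{x_0}\psi = B$; finally $\eqref{eq:main-th-3}$ and, for $\Grp = \R$, the full limit $\eqref{eq:main-th-5}$ over $t \in \R$ follow from the uniqueness clause already proved (write $t = n+s$ with $s\in[0,1)$ and use $e^{-sA}(\psi\circ\Phi^s) = \psi$).

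The step I expect to be the main obstacle is the sharp $C^{k,\alpha}$ bookkeeping in the existence proof — establishing the adapted-norm bounds $\norm{\D^j_x\Phi^n} \lesssim (\rho(S)+\epsilon)^n$ for all $j$, and then controlling the H\"older seminorm of $\D^k(R\circ\Phi^n)$ carefully enough that no room is lost in the spectral-spread inequality — together with the endpoint case of the uniqueness statement where only $\nu(e^A,S)\leq k$ is assumed (so possibly $\alpha = 0$ and $\nu = k$). In that case the crude estimate above degrades to a polynomial-in-$n$ factor multiplying $\omega(\norm{\Phi^n(x)})$, which need not tend to $0$ for an arbitrary modulus $\omega$; one must instead exploit the functional equation inherited by $\D^k\delta$ rather than a bare Taylor estimate.
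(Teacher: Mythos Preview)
Your strategy matches the paper's almost exactly: uniqueness via (i) nonresonance forces $\D^i_{x_0}\delta = 0$ for all $i \leq k$, then (ii) the functional equation plus a decay estimate forces $\delta \equiv 0$; existence via an approximate polynomial factor followed by convergence of $e^{-nA}P\circ\Phi^n$ (your telescoping sum is precisely the Neumann series the paper extracts from its contraction mapping in Lemma~\ref{lem-make-approx-exact}). Your passage from $\Grp = \Z$ to $\Grp = \R$ by applying uniqueness to $e^{-tA}(\psi\circ\Phi^t)$ is also what the paper ultimately does, though the paper first writes down Sternberg's integral $\int_0^1 e^{-sA}\tilde\psi\circ\Phi^s\,ds$ before invoking uniqueness to identify it with $\tilde\psi$; your shortcut is cleaner.

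The gap you flag at the endpoint $\alpha = 0$, $\nu(e^A,S) = k$ is real, and your proposed fix (``exploit the functional equation inherited by $\D^k\delta$'') is not how the paper proceeds and it is unclear how to make it work. The paper (Lemma~\ref{lem:psi-identically-0}) replaces your ``bound from above and show the bound tends to zero'' by a structural argument that reverses the direction of the estimate. Fix $x \neq x_0$, set $x_j := \Phi^j(x)$, and use pseudo-hyperbolic invariant manifold theory to conclude that $x_j$ approaches $x_0$ tangent to the generalized eigenspace of \emph{one specific} eigenvalue $\lambda$ of $S$, so that $|\lambda|^j/\|x_j\| \to C \neq 0$; this is strictly sharper than your $\|x_j\| \lesssim (\rho(S)+\epsilon)^j$, which only sees the slowest eigenvalue. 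From $e^{jA}\delta(x) = \delta(x_j) = o(\|x_j\|^k)$ (Peano remainder, using $\D^i_{x_0}\delta = 0$ for $i\leq k$), dividing by $\|x_j\|^k$ and using $|\lambda|^j/\|x_j\| \to C$ yields
\[
\Bigl(\tfrac{e^A}{|\lambda|^k}\Bigr)^j\delta(x) \longrightarrow 0.
\]
The hypothesis $\nu(e^A,S) \leq k$ says exactly that every eigenvalue of $e^A/|\lambda|^k$ has modulus $\geq 1$, so by Jordan form the only vector whose forward orbit under this matrix tends to $0$ is the zero vector; hence $\delta(x) = 0$. The little-$o$ is thus absorbed into the \emph{qualitative} statement that $\bigl(e^A/|\lambda|^k\bigr)^j\delta(x) \to 0$, and no quantitative rate on the modulus $\omega$ is ever needed---in particular the polynomial factors from Jordan blocks that worried you never enter. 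Your approach via $\rho(S)$ and $\rho(e^{-A})$ cannot recover this, because it conflates all trajectories' decay rates into the single worst case $\rho(S)$; the point is that each trajectory selects its own $\lambda$, and for \emph{that} $\lambda$ the matrix $e^A/|\lambda|^k$ has spectrum outside the open unit disk.
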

	
	\begin{Rem}\label{rem:intuition}
	The spectral spread condition $\nu(e^A,\D_{x_0}\Phi^1)<k+\alpha$ means that, if $\lambda$ is any eigenvalue of $\D_{x_0}\Phi^1$, then $|\lambda|^{k+\alpha}$ is smaller than $|\mu|$ for every eigenvalue $\mu$ of $e^A$.
	The $k$-nonresonance condition means that no eigenvalue $\mu$ of $e^A$ can be written as a product (with repetitions allowed) of $\ell\in \{2,\ldots, k\}$ eigenvalues of $\D_{x_0}\Phi^1$.
	The uniqueness statement of Theorem \ref{th:main-thm} then says that, under these two conditions, any linearizing semiconjugacy $\psi\in \CLKA(Q,\C^m)$ is uniquely determined by its derivative at the fixed point $x_0$.
	Under the additional assumption that $\Phi\in \CLKA$ rather than merely $\Phi\in C^1$, the existence statement of Theorem \ref{th:main-thm} gives sufficient conditions ensuring that, given a \emph{linear} linearizing semiconjugacy $B\colon \T_{x_0}Q\to \C^m$ for the \emph{linear} dynamical system $(v,t)\mapsto \D_{x_0}\Phi^t \cdot v$, there exists a unique \emph{nonlinear} linearizing semiconjugacy $\psi\in \CLKA(Q,\C^m)$ for the \emph{nonlinear} dynamical system $\Phi$ satisfying $\D_{x_0}\psi = B$. 
	Thus, the existence statement can be thought of as supplying sufficient conditions under which a linearizing semiconjugacy can be constructed from an ``infinitesimal'' one.
	\end{Rem}
	
	\begin{Rem}[Weaker nonresonance assumption in the case $\Grp = \R$]\label{rem:weaker-nonresonance-real-time}
	Assume $\Grp = \R$ in the setting of Theorem \ref{th:main-thm}.
	Differentiating the identity $\Phi^{t+s}=\Phi^t\circ \Phi^s$ at $x_0$ yields
	\begin{equation}\label{eq:d-phi-group-property}
	\forall t,s\in \R\colon \D_{x_0}\Phi^{t+s}= \D_{x_0}\Phi^t\circ \D_{x_0}\Phi^s.
	\end{equation}
	If $\Phi\in C^1$, then the map $t\mapsto \D_{x_0}\Phi^t$ is, a priori, merely continuous.
	However, continuity together with \eqref{eq:d-phi-group-property} actually implies the existence of a linear map $J\colon \T_{x_0}Q\to \T_{x_0}Q$ such that
	\begin{equation}
	\forall t\in \R\colon \D_{x_0}\Phi^t = e^{tJ}.
	\end{equation}
	See \cite[Thm~2.9]{engel2000one} and \cite[p.~9, para.~1]{engel2000one}.
	Let $\lambda_1,\ldots, \lambda_n$ be the eigenvalues of (the complexification of) $J$, repeated with multiplicities.
	Taking the natural logarithm of \eqref{eq:nonres-lem}, we see that $k$-nonresonance of $(e^A, \D_{x_0}\Phi^1)$ means that, for any (possibly complex) eigenvalue $\mu$ of $A$, any $n$-tuple $(m_1,\ldots, m_n)\in \N_{\geq 0}^n$ satisfying $2\leq m_1 + \cdots + m_n < k + 1$, and any $\ell\in \Z$,
	\begin{equation}\label{eq:nonresonance-log-rem}
	\mu \neq m_1 \lambda_1 + \cdots + m_n \lambda_n + i 2\pi \ell,
	\end{equation}
	where  $i=\sqrt{-1}$.
    If $c>0$ and we define the rescaled linear map $A_c\coloneqq cA$ and the time-rescaled flow $\Phi_c\coloneqq \Phi^{ct}$, then we see that $B \D_{x_0}\Phi_c^t= B\D_{x_0}\Phi^{ct} = e^{ctA}B= e^{tA_c}B$ and $\psi \circ \Phi_c^t = \psi \circ \Phi^{ct} = e^{ctA}\psi = e^{tA_c}\psi$ for all $t\in \R$.
    Thus, $B$ and $\psi$ satisfy \eqref{eq:main-th-1} and \eqref{eq:main-th-3} with $\Grp=\R$ if and only if $B$ and $\psi$ satisfy \eqref{eq:main-th-1} and \eqref{eq:main-th-3} with $A$ and $\Phi$ replaced by $A_c$ and $\Phi_c$ for every $c> 0$. 
    Now, the $k$-nonresonance condition for $(e^{A_c},\D_{x_0}\Phi_c^1)$ is obtained from \eqref{eq:nonresonance-log-rem} by multiplying the eigenvalues $\mu$ and $\lambda_1,\ldots,\lambda_n$ by $c$; dividing by $c$ then yields
   	\begin{equation}\label{eq:nonresonance-log-rem-rescale}
   	\mu \neq m_1 \lambda_1 + \cdots + m_n \lambda_n + i \frac{2\pi}{c} \ell.
   	\end{equation}
	Because there are only finitely many eigenvalues of $A$ and $J$, \eqref{eq:nonresonance-log-rem-rescale} can be violated for all $c>0$ if and only if it is violated with $\ell = 0$.
	It follows that, if $\Grp = \R$, the $k$-nonresonance assumption in Theorem \ref{th:main-thm} (as well as in Theorem \ref{th:classify-point} and Propositions \ref{prop:sternberg} and \ref{prop:koopman-cka-fix}) can be replaced with the less restrictive condition 
	\begin{equation}\label{eq:nonresonance-log-rem-no-2-pi}
	\mu \neq m_1 \lambda_1 + \cdots + m_n \lambda_n
	\end{equation}
	for all $(m_1,\ldots, m_n)\in \N_{\geq 0}^n$ satisfying $2\leq m_1+\cdots m_n < k + 1$ and all (possibly complex) eigenvalues $\mu$ of $A$, where $\lambda_1,\ldots, \lambda_n$ are the eigenvalues of (the complexification of) $J$, repeated with multiplicity. 	
	\end{Rem}
	
	\begin{Ex}\label{ex:intuition}
	Consider the setting of Theorem \ref{th:main-thm} in the special case that $Q\subset \R^n$, $\Grp = \R$, and with $\Phi$ the flow of the ordinary differential equation
	$$\frac{dx}{dt}=f(x)$$
	with $f\in C^1$ a complete vector field, so that $f(x_0)=0$.
	Let $B\in \C^{m\times n}$ be any matrix such that $B\D_{x_0}f = AB$ for some $A\in \C^{m\times m}$. (For example, in the case $m = 1$, $B$ is a left eigenvector of $\D_{x_0}f$ if $B\neq 0$.)
	It follows that $Bh(t\D_{x_0}f)=h(tA)B$ for any analytic function $h$ and $t\in \R$, so in particular $B e^{t\D_{x_0}f}= e^{tA}B$.
	Since $\D_{x_0}\Phi^t = e^{t\D_{x_0}f}$ for all $t\in \R$ (because $f(x_0)=0$), it follows that $B \D_{x_0}\Phi^t = e^{tA}B$ for all $t\in \R$.
	Thus, Theorem \ref{th:main-thm} can be applied in this situation as long as the spectral spread and nonresonance conditions are satisfied.
	Since $\D_{x_0}\Phi^1 = e^{\D_{x_0}f}$ in the present setting, satisfaction of the spectral spread condition $\nu(e^A,\D_{x_0}\Phi^1)<k+\alpha$ according to Definition \ref{def:nonres-spread} (after taking logarithms) means that, if $\lambda$ is any (possibly complex) eigenvalue of $\D_{x_0}f$, and if $\mu$ is any (possibly complex) eigenvalue of $A$, then the real part $\textnormal{Re}(\lambda)<0$ satisfies $$(k+\alpha)\textnormal{Re}(\lambda)< \textnormal{Re}(\mu).$$
	Satisfaction of the $k$-nonresonance condition means that, for any (possibly complex) eigenvalue $\mu$ of $A$, any $n$-tuple $(m_1,\ldots, m_n)\in \N_{\geq 0}^n$ satisfying $2\leq m_1 + \cdots + m_n < k+1$, and any $\ell\in \Z$,
	\begin{equation}\label{eq:nonresonance-log}
	\mu \neq m_1 \lambda_1 + \cdots + m_n \lambda_n + i 2\pi \ell,
	\end{equation}
	where $\lambda_1,\ldots, \lambda_n$ are the (possibly complex) eigenvalues of $\D_{x_0}f$, repeated with multiplicity, and $i=\sqrt{-1}$.
	By Remark \ref{rem:weaker-nonresonance-real-time}, the $k$-nonresonance condition of Theorem \ref{th:main-thm} can actually be replaced with the less restrictive condition
	\begin{equation}\label{eq:nonresonance-log-no-2-pi}
	\mu \neq m_1 \lambda_1 + \cdots + m_n \lambda_n
	\end{equation}
	in the setting of the present example.
	To make easier the application of the existence portion of Theorem \ref{th:main-thm}, we note that if the vector field $f\in \CLKA$, then also the flow $\Phi\in \CLKA$ \cite[Thm~A.6]{eldering2013normally}.	
	\end{Ex}

   	\begin{Rem}
   	Definitions \ref{def:nonres} and \ref{def:nonres-spread} are not independent.
   	In particular, if $(X,Y)$ is $(\ell-1)$-nonresonant and $\nu(X,Y) < \ell$ for $\ell\in \N_{\geq 2}$, then it follows that $(X,Y)$ is $\infty$-nonresonant.
   	Hence an equivalent statement of Theorem \ref{th:main-thm} could be obtained by replacing $k$-nonresonance with $\infty$-resonance everywhere (alternatively, for the existence statement only $(k-1)$-nonresonance need be assumed in the case $\alpha = 0$).
   	We prefer to use the stronger-sounding statement of the theorem above since it makes it clear that the set of matrix pairs $(e^A, \D_{x_0}\Phi^1)$ satisfying its hypotheses are \emph{open} in the space of all matrix pairs.
   	Openness for $k < +\infty$ is immediate, and openness for $k = +\infty$ follows the fact that $\nu(e^A,\D_{x_0}\Phi^1)$ is always finite.  
   \end{Rem}

    \begin{Rem}\label{rem:no-nonres-needed}
     The statement in Theorem \ref{th:main-thm} regarding the limit in \eqref{eq:main-th-5} actually holds without any nonresonance assumptions if an approximate linearizing factor $P\in \CLKA(Q,\C^m)$ satisfying \eqref{eq:main-th-4} exists; see  Lemma  \ref{lem-make-approx-exact} in \S \ref{sec:main-proof-exist}. 
    \end{Rem}

    \begin{Rem}[the $C^\infty$ case]\label{rem:point-cinf-case}
    In the case that $k = +\infty$, the hypothesis $\nu(e^A,\D_{x_0}\Phi^1) < k + \alpha$ becomes $\nu(e^A,\D_{x_0}\Phi^1) < +\infty$ which is automatically satisfied since $\nu(e^A,\D_{x_0}\Phi^1)$ is always finite. 	
    Hence for the case $k = +\infty$, no assumption is needed on the spectral spread in Theorem \ref{th:main-thm}; we need only assume that $(e^A, \D_{x_0}\Phi^1$) is $\infty$-nonresonant.
    Similar remarks hold for all of the following results in this paper which include a condition of the form $\nu(\slot,\slot) < k + \alpha$.
    \end{Rem} 
    \begin{Rem}[sketch of the proof of the existence portion of Theorem \ref{th:main-thm}]\label{rem:main-thm-1-outline}
    Here we sketch the proof of the existence statement of Theorem \ref{th:main-thm}, which is somewhat more involved than the uniqueness proof.
    (The existence proof also yields uniqueness, but under the additional assumption $\Phi\in \CLKA$ not needed for the uniqueness statement in Theorem \ref{th:main-thm}.)
   	Since the basin of attraction $Q$ of $x_0$ is always diffeomorphic to $\R^n$  \cite[Lem~2.1]{wilson1967structure}, we may assume that $Q = \R^n$ and $x_0 = 0$.
   	For now we consider the case $k < +\infty$.
   	First, the $k$-nonresonance assumption implies that we can uniquely solve \eqref{eq:main-th-4} order by order (in the sense of Taylor polynomials) for $P$ up to order $k$.
   	Once we obtain a polynomial $P$ of sufficiently high order, we derive a fixed point equation for the high-order remainder term $\varphi$, where $\psi = P + \varphi$ is the desired linearizing factor.
   	Given a sufficiently small, positively invariant, closed ball $N$ centered at the fixed point, the proof of Lemma \ref{lem-make-approx-exact} shows that the spectral spread condition $\nu(e^A,\D_{x_0}\Phi^1)< k+\alpha$ implies that the restriction $\varphi|_N$ of the desired high-order term is the fixed point of a map $S\colon \CKA(N,\C^m)\to \CKA(N,\C^m)$ which is a contraction, with respect to the standard $\CKA$ norm $\norm{\slot}_{k,\alpha}$ making $\CKA(N,\C^m)$ a Banach space, when restricted to the closed linear $S$-invariant subspace $\F\subset  \CKA(N,\C^m)$ of functions with vanishing $i$-th derivatives at the fixed point for all integers $0\leq i < k+\alpha$.\footnote{Note, however, that $\norm{\slot}_{k,\alpha}$ must be induced by an appropriate underlying \concept{adapted norm} \cite[Sec.~A.1]{cabre2003parameterization1} on $\R^n$ to ensure that $S$ is a contraction.}
   	In fact, $S$ is the affine map defined by 
    	\begin{equation}\label{eq:contraction}
    	S(\varphi|_N)\coloneqq -P|_N + e^{-A}\left(P|_N + \varphi|_N\right)\circ \Phi^1.
    	\end{equation}
    Hence we can obtain $\varphi|_N$ by the standard contraction mapping theorem, thereby obtaining the function $\psi|_N = \varphi|_N + P|_N\in \CKA(N,\C^m)$ satisfying $\psi|_N \circ \Phi^1|_N = e^A \psi|_N$.
    (The preceding techniques are an extension of Sternberg's \cite{sternberg1957local} and owe much to Sternberg's work.)
    We then extend the domain of $\psi|_N$ using the globalization techniques of \cite{lan2013linearization,kvalheim2018global} to obtain a function $\psi \in \CLKA(Q,\C^m)$ defined on the entire basin $Q$ and satisfying $\psi \circ \Phi^1 = e^A \psi$.
    To show that the function $\psi$ satisfies \eqref{eq:main-th-3} when $\Grp = \R$, i.e., that $\psi$ is actually a linearizing factor of $\Phi^t$ for \emph{all} $t\in \R$, we use an argument of Sternberg \cite[Lem.~4]{sternberg1957local} in combination with the uniqueness statement of Theorem \ref{th:main-thm}.
    We extend the result to the case that $k = +\infty$ using a bootstrapping argument.
        
        \begin{Rem}[a numerical consideration]\label{rem:numerical}
        Our proof of the existence portion of Theorem \ref{th:main-thm}, outlined above, was inspired by Sternberg's proof of his linearization theorem \cite[Thms~2,~3,~4]{sternberg1957local} and also has strong similarities with the techniques used to prove the existence of semiconjugacies of the type \eqref{eq:cd-param-method} using the parameterization method \cite{cabre2003parameterization1,cabre2003parameterization2,cabre2005parameterization3}.
        We repeat here an observation of \cite[Sec.~3]{cabre2003parameterization1} and \cite[Rem.~5.5]{cabre2005parameterization3} which is also relevant for numerical computations of linearizing semiconjugacies of the type \eqref{eq:cd-gen-efunc} (such as Koopman eigenfunctions) based on our proof of Theorem \ref{th:main-thm}.
        Consider $P\in \CLKA(\R^n,\C^m)$ satisfying \eqref{eq:main-th-4} as in Remark \ref{rem:main-thm-1-outline}; $N$, $S$, and $\F$ as in the same remark; and an initial guess $\psi_0|_N = P|_N + \varphi_0|_N$ for a local linearizing factor with $\varphi_0|_N\in \F$. If $$\Lip{S}\leq \kappa <  1\qquad \norm{S(\varphi_0|_N)-\varphi_0|_N}\leq \delta$$ 
        where $\Lip{S}$ is the Lipschitz constant of $S$, then the standard proof of the contraction mapping theorem implies the estimate
        \begin{equation}\label{eq:a-posteriori}
        \norm{\varphi|_N - \varphi_0|_N} \leq \delta/(1-\kappa),
        \end{equation}
        where $\varphi|_N\in \F$ is such that $\psi|_N = P|_N + \varphi|_N$ is the unique actual local linearizing factor.
        Thus equation \eqref{eq:a-posteriori} furnishes an upper bound on the distance between the initial guess $\varphi_0|_N$ and the true solution $\varphi|_N$, and can be used for \concept{a posteriori estimates} in numerical analysis.
        \end{Rem}
        
    \end{Rem}

	Theorem \ref{th:main-thm} gave conditions ensuring existence and uniqueness of linearizing factors under spectral spread and nonresonance conditions.
	Before stating Theorem \ref{th:main-thm-per}, we state a result on the uniqueness of linearizing factors which does not assume any nonresonance conditions.
	Proposition \ref{prop:uniqueness-without-nonresonance} follows immediately from Lemma \ref{lem:psi-identically-0} (used to prove the uniqueness statement of Theorem \ref{th:main-thm}) and the fact that $Q$ is diffeomorphic to $\R^{\dim(Q)}$ as mentioned above.
	\begin{Prop}\label{prop:uniqueness-without-nonresonance}
	Fix $k\in \N_{\geq 1}\cup \{+\infty\}$ and $0 \leq \alpha \leq 1$, and let $\Phi\colon Q \times \Grp \to Q$ be a $C^1$ dynamical system with $Q$ the basin of an attracting hyperbolic fixed point $x_0\in Q$,  where $Q$ is a smooth manifold with $\dim(Q)\geq 1$ and either $\Grp = \Z$ or $\Grp = \R$.  
	Let $m\in \N_{\geq 1}$ and $e^{A}\in \GL(m,
	\C)$ have spectral radius $\rho(e^{A})<1$ and satisfy $\nu(e^A,\D_0 F) < k + \alpha$.
	Let $\varphi\in \CLKA(Q,\C^m)$ satisfy $\D_{x_0}^i \varphi = 0$ for all integers\footnote{Note the case $\alpha = 0$ which does not require vanishing of the $k$-th derivative.} $0\leq i < k+\alpha$  and $$\varphi \circ \Phi^1 = e^{A} \varphi.$$
	Then it follows that $\varphi \equiv 0$.
	In particular, if $\varphi = \psi_1 - \psi_2$, then $$\psi_1 = \psi_2.$$	
	\end{Prop}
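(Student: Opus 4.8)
First I would reduce to a standard model. Since $x_0$ is a globally attracting hyperbolic fixed point it is globally asymptotically stable, so (exactly as in the sketch in Remark~\ref{rem:main-thm-1-outline}) the structure result of \cite{wilson1967structure} shows $Q$ is diffeomorphic to $\R^{\dim Q}$; because being $\CLKA$ is diffeomorphism invariant (the Remark following Definition~\ref{def:ckal-funcs}), I may assume $Q=\R^n$ and $x_0=0$. Writing $F:=\Phi^1$, $L:=\D_0F$ and $\nu:=\nu(e^A,L)$, the task is to show that any $\varphi\in\CLKA(\R^n,\C^m)$ with $\D_0^i\varphi=0$ for $0\le i\le k$ and $\varphi\circ F=e^A\varphi$ is identically zero; the closing ``in particular'' is then the case $\varphi=\psi_1-\psi_2$ (legitimate precisely when $\psi_1,\psi_2$ agree to order $k$ at $x_0$, as in Theorem~\ref{th:main-thm}).

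The engine is the iterated functional equation: inducting on $\varphi\circ F=e^A\varphi$ gives $\varphi(x)=e^{-nA}\,\varphi(F^n(x))$ for all $x\in\R^n$ and all $n\in\N_{\geq 0}$. Two observations make this effective. First, since $0$ is globally attracting and $e^A$ is invertible, it suffices to prove $\varphi\equiv 0$ on a single neighborhood $U$ of $0$: for arbitrary $x$, pick $N$ with $F^N(x)\in U$, so $0=\varphi(F^N x)=e^{NA}\varphi(x)$ forces $\varphi(x)=0$ — no separate globalization step is needed. Second, after shrinking $U$ and passing to an adapted norm \cite[Sec.~A.1]{cabre2003parameterization1} on $\R^n$ together with a matching one on $\C^m$, I can arrange, for any prescribed $\epsilon>0$, that $U$ is positively invariant with $\|F(y)\|\le(\rho(L)+\epsilon)\|y\|$ on $U$ and $\|e^{-A}\|_{\mathrm{op}}\le\rho(e^{-A})+\epsilon$, hence $\|F^n(y)\|\le(\rho(L)+\epsilon)^n\|y\|$ and $\|e^{-nA}\|\le(\rho(e^{-A})+\epsilon)^n$ for $y\in U$.

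I would close the generic case by feeding in a Taylor estimate for $\varphi$. Since $\varphi$ is $\CLKA$ with $\D_0^i\varphi=0$ for $0\le i\le k$, its order-$k$ Taylor remainder at $0$ gives $\|\varphi(z)\|\le C\|z\|^{k+\alpha}$ near $0$ when $\alpha>0$ (as $\D^k\varphi$ is $\alpha$-Hölder and vanishes at $0$), and $\|\varphi(z)\|\le\omega(\|z\|)\|z\|^k$ with $\omega(\rho)\to 0$ as $\rho\to 0$ when $\alpha=0$ (as $\D^k\varphi$ is continuous and vanishes at $0$). Unwinding Definition~\ref{def:nonres-spread}, the hypothesis $\nu<k+\alpha$ is exactly $\rho(e^{-A})\rho(L)^{k+\alpha}<1$, so $\epsilon$ above can be chosen with $q:=(\rho(e^{-A})+\epsilon)(\rho(L)+\epsilon)^{k+\alpha}<1$; then for $x\in U$
\[
\|\varphi(x)\|=\big\|e^{-nA}\varphi(F^n x)\big\|\le(\rho(e^{-A})+\epsilon)^n\,C\big((\rho(L)+\epsilon)^n\|x\|\big)^{k+\alpha}=C\|x\|^{k+\alpha}q^{\,n}\xrightarrow[n\to\infty]{}0,
\]
whence $\varphi|_U\equiv 0$ and so $\varphi\equiv 0$. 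This is the content of Lemma~\ref{lem:psi-identically-0} invoked in the statement, and Proposition~\ref{prop:uniqueness-without-nonresonance} follows.

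The one delicate point — and where I expect the real work to be — is the borderline permitted by the ``or $\nu\le k$'' clause: $\alpha=0$ and $\nu=k$. Then $\rho(e^{-A})\rho(L)^k=1$, so $q>1$ for every $\epsilon>0$ and the $\epsilon$-slack in the adapted-norm bounds is unaffordable. Here I would drop the slack and instead exploit the genuine decay $\omega(\|F^n x\|)\to 0$ (available precisely because $\alpha=0$ asks only for continuity of $\D^k\varphi$) against estimates of $\|F^n(x)\|$ and $\|e^{-nA}\|$ carrying at worst polynomial Jordan-block growth: when $L$ and $e^A$ are semisimple those polynomial factors are absent and $\omega\to 0$ closes the estimate at once, while in general I would try to upgrade $\varphi$ to class $C^{k,\beta}$ near $0$ for some $\beta>0$ by differentiating $\varphi\circ F=e^A\varphi$ $k$ times and iterating the resulting relation (the Faà di Bruno terms other than the leading one decay like a positive power of $\|z\|$, since $\D^j\varphi$ vanishes to order $k-j\ge 1$), thereby reducing back to the already-handled case $\nu<k+\beta$. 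Making this borderline analysis rigorous — controlling the accumulated lower-order terms and the Jordan-block growth — is the technical heart of the argument; everything else is routine.
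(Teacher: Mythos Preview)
Your reduction to $\R^n$ and the non-borderline argument ($\nu<k+\alpha$ strictly) are correct, though the paper's Lemma~\ref{lem:psi-identically-0} does not proceed via adapted norms and a contraction constant $q<1$; it uses a single sharper dynamical fact (below) that treats all cases uniformly.

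The gap is exactly where you flag it, the borderline $\alpha=0$, $\nu=k$, but you have misdiagnosed the obstruction. The $\epsilon$-loss in $\|F(y)\|\le(\rho(L)+\epsilon)\|y\|$ does \emph{not} come from Jordan blocks of $L$; it comes from the nonlinear part of $F$. Even when $L$ and $e^A$ are semisimple you can only arrange $\|Ly\|\le\rho(L)\|y\|$ for the linear map, and you still need $\epsilon>0$ to absorb $F(y)-Ly=o(\|y\|)$ on any neighborhood. Hence at $\nu=k$ you are left with $(\rho(e^{-A})(\rho(L)+\epsilon)^k)^n$, which grows exponentially, against $\omega(\|F^n x\|)$, which need decay only arbitrarily slowly. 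Your semisimple claim fails, and the proposed bootstrap of $\D^k\varphi$ to $C^\beta$ is speculative and would face the same exponential-vs.-$o(1)$ mismatch.

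The paper closes the borderline case by replacing the crude bound $\|F^n x\|\le(\rho(L)+\epsilon)^n\|x\|$ with a pointwise asymptotic: for each fixed $x_0\neq 0$ the orbit $x_j=F^j(x_0)$ approaches $0$ tangent to a specific generalized eigenspace $E_\lambda$ of $L$, so $\|x_j\|\sim C|\lambda|^j$ with no $\epsilon$-loss (this uses Hartman's $C^1$ linearization or the pseudohyperbolic stable/center-stable manifold theorems). Writing $e^{jA}\varphi(x_0)=\varphi(x_j)$ and dividing by $\|x_j\|^k$, the little-o Taylor remainder gives
\[
C^{k}\lim_{j\to\infty}\Big(\tfrac{e^{A}}{|\lambda|^{k}}\Big)^{j}\varphi(x_0)=\lim_{j\to\infty}\frac{\varphi(x_j)}{\|x_j\|^{k}}=0.
\]
Since $\nu\le k$ forces every eigenvalue of $e^A/|\lambda|^k$ to have modulus $\ge 1$, the Jordan form of $(e^A/|\lambda|^k)^j$ shows this limit can vanish only if $\varphi(x_0)=0$. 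This is the missing idea: trade the uniform-in-$x$ contraction estimate for an orbit-by-orbit exact rate.
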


	\begin{restatable}[Existence and uniqueness of $\CLKA$ global linearizing factors for a limit cycle attractor]{Th}{ThmMainPer}\label{th:main-thm-per}
		Fix $k\in \N_{\geq 1}\cup \{+\infty\}$ and $0 \leq \alpha \leq 1$. 
		Let $\Phi\colon Q \times \R \to Q$ be a $\CLKA$ flow with $Q$ the basin of an attracting hyperbolic $\tau$-periodic orbit with image $\Gamma\subset Q$,  where $Q$ is a smooth manifold with $\dim(Q)\geq 2$.  
		Fix $x_0\in \Gamma$ and let $E^s_{x_0}$ denote the unique $\D_{x_0}\Phi^\tau$-invariant subspace complementary to $\T_{x_0} \Gamma$.
		Let $m\in \N_{\geq 1}$ and $e^{\tau A}\in \GL(m,
		\C)$ have spectral radius $\rho(e^{\tau A})<1$, and let the linear map $B\colon E^s_{x_0} \to \C^m$ satisfy
		\begin{equation}\label{eq:main-th-per-1}
		B \D_{x_0}\Phi^\tau|_{E^s_{x_0}} = e^{\tau A} B.
		\end{equation}
		Assume that $(e^{\tau A},\D_{x_0} \Phi^\tau|_{E^s_{x_0}})$ is $k$-nonresonant, and assume that $\nu(e^{\tau A},\D_{x_0} \Phi^\tau|_{E^s_{x_0}}) < k + \alpha$.
		
		Then there exists a unique $\psi \in \CLKA(Q,\C^m)$ satisfying
		\begin{equation}\label{eq:main-th-per-2}
		\forall t\in \R\colon \psi \circ \Phi^t = e^{t A} \psi, \qquad \D_{x_0} \psi|_{E^s_{x_0}} = B,
		\end{equation}
		and if $B\colon E^s_{x_0}\to \R^m\subset \C^m$ and $A\in \R^{m\times m}\subset \C^{m\times m}$ are real, then $\psi\colon Q\to \R^m \subset \C^m$ is real.		
	\end{restatable}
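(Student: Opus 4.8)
The plan is to deduce Theorem~\ref{th:main-thm-per} from Theorem~\ref{th:main-thm} by restricting the flow to a single isochron and then re-spreading the resulting linearizing factor over $Q$ by means of the flow. Since $\Gamma$ is a globally attracting hyperbolic $\tau$-periodic orbit, $x_0$ is a hyperbolic fixed point of the time-$\tau$ map $\Phi^\tau$, and its global stable manifold
$$\Sigma \coloneqq \{q\in Q : \Phi^{n\tau}(q)\to x_0 \text{ as } n\to\infty\}$$
--- the isochron through $x_0$ --- is, by the $\CLKA$ stable manifold theorem for $\Phi^\tau$ together with the globalization/structure results of \cite{lan2013linearization,kvalheim2018global}, a $\CLKA$ embedded submanifold with $\T_{x_0}\Sigma = E^s_{x_0}$; moreover the isochrons $\Sigma_\theta \coloneqq \Phi^\theta(\Sigma)$ ($\theta\in\R/\tau\Z$) foliate $Q$, and $\Phi^\tau|_\Sigma\colon\Sigma\to\Sigma$ is a $\CLKA$ discrete dynamical system with globally attracting hyperbolic fixed point $x_0$ and derivative $\D_{x_0}(\Phi^\tau|_\Sigma) = \D_{x_0}\Phi^\tau|_{E^s_{x_0}}$. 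Applying Theorem~\ref{th:main-thm} to this $\Grp=\Z$ system --- with $e^{\tau A}$ playing the role of ``$e^A$'', so that the $k$-nonresonance and spectral-spread hypotheses of Theorem~\ref{th:main-thm-per} are exactly those of Theorem~\ref{th:main-thm} --- produces a unique $\psi_0\in\CLKA(\Sigma,\C^m)$ with $\psi_0\circ\Phi^\tau|_\Sigma = e^{\tau A}\psi_0$ and $\D_{x_0}\psi_0 = B$, existing when $\nu(e^{\tau A},\D_{x_0}\Phi^\tau|_{E^s_{x_0}})<k+\alpha$, and real when $A,B$ are.

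Next I would extend $\psi_0$ to all of $Q$. Every $q\in Q$ lies on some isochron, so there is $t\in\R$, unique modulo $\tau$, with $\Phi^t(q)\in\Sigma$; set $\psi(q)\coloneqq e^{-tA}\psi_0(\Phi^t(q))$. Replacing $t$ by $t+n\tau$ multiplies $\psi_0(\Phi^t(q))$ by $e^{n\tau A}$, which is cancelled by $e^{-n\tau A}$, so $\psi$ is well-defined. Locally $t=t(q)$ can be taken $\CLKA$ --- the condition $\Phi^t(q)\in\Sigma$ is cut out transversally because the flow advances asymptotic phase at unit rate, so the implicit function theorem applies, using the $\CLKA$ regularity of the isochron foliation --- whence $\psi\in\CLKA(Q,\C^m)$. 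A short computation gives $\psi\circ\Phi^s = e^{sA}\psi$ for all $s\in\R$; restricting to $\Sigma$ gives $\psi|_\Sigma = \psi_0$, so $\D_{x_0}\psi|_{E^s_{x_0}} = B$; and $\psi$ is real when $A,B$ are.

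For uniqueness, if $\psi_1,\psi_2\in\CLKA(Q,\C^m)$ both satisfy \eqref{eq:main-th-per-2}, then (using $\T_{x_0}\Sigma = E^s_{x_0}$) the restrictions $\psi_i|_\Sigma$ both satisfy the hypotheses of the uniqueness part of Theorem~\ref{th:main-thm} for the discrete system $\Phi^\tau|_\Sigma$, hence $\psi_1|_\Sigma=\psi_2|_\Sigma$; then, for any $q\in Q$ and any $t$ with $\Phi^t(q)\in\Sigma$, the relation $\psi_i(q)=e^{-tA}(\psi_i|_\Sigma)(\Phi^t(q))$ forces $\psi_1=\psi_2$ on $Q$. (This is the analogue for the periodic case of how Proposition~\ref{prop:uniqueness-without-nonresonance} / Lemma~\ref{lem:psi-identically-0} feeds the uniqueness statement of Theorem~\ref{th:main-thm}.)

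The main obstacle is the input I have quietly assumed: the $\CLKA$ regularity of the isochron foliation, i.e.\ that $\Sigma$ and the local ``time-to-$\Sigma$'' function $t(q)$ are as smooth as the flow. For integer $k$ this is classical (via fiber-contraction / invariant-manifold arguments), but the sharp $\CLKA$ statement requires the H\"older stable manifold theorem for $\Phi^\tau$ globalized along the flow, which is precisely what the machinery of \cite{lan2013linearization,kvalheim2018global} supplies. Once this regularity is in hand the rest is bookkeeping; in particular nothing is lost in the reduction because the matrix pair relevant to the flow and that relevant to the discrete system $\Phi^\tau|_\Sigma$ literally coincide.
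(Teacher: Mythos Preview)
Your proposal is correct and follows essentially the same approach as the paper: restrict to the isochron $\Sigma=\Ws_{x_0}$, apply Theorem~\ref{th:main-thm} to the $\Grp=\Z$ system $\Phi^\tau|_\Sigma$, and propagate along the flow via $\psi|_{\Phi^{-t}(\Sigma)}=e^{-tA}\psi_0\circ\Phi^t$. The paper's proof is terser but identical in structure; the only technical wrinkle you glossed over is that $\Sigma$ is a priori only a $\CLKA$ manifold while Theorem~\ref{th:main-thm} is stated for smooth $Q$, which the paper handles (in a footnote elsewhere) by endowing $\Sigma$ with a compatible $C^\infty$ structure via \cite{hirsch1976differential}, and the $\CLKA$ regularity of $\Sigma$ itself comes from \cite{de1995irwin} rather than \cite{lan2013linearization,kvalheim2018global}.
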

	
	\begin{Rem}
	For the uniqueness statement of Theorem \ref{th:main-thm} (and Proposition \ref{prop:koopman-cka-fix}) it is only assumed that $\Phi\in C^1$, whereas $\Phi\in \CLKA$ is assumed for the uniqueness statement of Theorem \ref{th:main-thm-per} (and Proposition \ref{prop:koopman-cka-per}).
	This is because our proof of the uniqueness statement of Theorem \ref{th:main-thm-per} relies on $\CLKA$ smoothness of the isochrons \cite{isochrons} or (equivalently) strong stable manifolds \cite{fenichel1974asymptotic,fenichel1977asymptotic,hirsch1977}  of the periodic orbit, a property which is ensured by the assumption that $\Phi\in \CLKA$.
	We leave open the question as to whether the need for this additional assumption is merely an artifact of our proof of Theorem \ref{th:main-thm-per}.
	\end{Rem}
	
	\begin{Rem}
	By considering the Poincar\'{e} (first-return) map with Poincar\'{e} section an isochron \cite{isochrons} and using Theorem \ref{th:main-thm}, it is readily seen that the linearizing factor $\psi$ of Theorem \ref{th:main-thm-per} can also be represented as a limit analogous to \eqref{eq:main-th-5} which converges in the pointwise sense.
	While we believe that this limit also converges in the topology of $C^{k,\alpha}$-uniform convergence on compact subsets of $Q$ as in Theorem \ref{th:main-thm}, we did not attempt to prove this stronger convergence statement. 
	\end{Rem}
	
    \section{Applications}\label{sec:applications}
	In this section, we give some applications of Theorems \ref{th:main-thm} and \ref{th:main-thm-per} and Proposition \ref{prop:uniqueness-without-nonresonance}.
	\S \ref{sec:app:stern-floq} contains results on Sternberg linearizations and Floquet normal forms.
	\S \ref{sec:app-p-eigs} gives applications to principal Koopman eigenfunctions and isostable coordinates.
	\S \ref{sec:app-classify} contains our classification theorems for $C^\infty$ eigenfunctions of $C^\infty$ dynamical systems.
	\subsection{Sternberg linearizations and Floquet normal forms}\label{sec:app:stern-floq}
	
	The following result is an improved statement of Sternberg's linearization theorem for hyperbolic sinks \cite[Thms~2,3,4]{sternberg1957local}.
	Our improvements include: uniqueness of the linearizing conjugacy, refined $\CLKA$ regularity rather than just $C^k$, and global definition of the linearization on the entire basin of attraction $Q$ rather than just on some small neighborhood of $x_0$.
	Our techniques for globalizing the domain of the linearization are essentially the same as those used in \cite{lan2013linearization,kvalheim2018global}.
	For the case $\Grp = \R$, the $k$-nonresonance assumption in the following result can be replaced by the slightly less restrictive nonresonance condition in Remark \ref{rem:weaker-nonresonance-real-time}.

	\begin{Prop}[Existence and uniqueness of global $\CLKA$ Sternberg linearizations]\label{prop:sternberg}
		Fix $k\in \N_{\geq 1}\cup \{+\infty\}$ and $0 \leq \alpha \leq 1$.
		Let $\Phi\colon Q \times \Grp \to Q$ be a $\CLKA$ dynamical system with $Q$ the basin of an attracting hyperbolic fixed point $x_0\in Q$,  where $Q$ is a smooth manifold with $\dim(Q)\geq 1$ and either $\Grp = \Z$ or $\Grp = \R$.  
		Assume that $\nu(\D_{x_0} \Phi^1,\D_{x_0} \Phi^1) < k + \alpha$, and assume that $(\D_{x_0} \Phi^1,\D_{x_0} \Phi^1)$ is $k$-nonresonant.
		
		Then there exists a unique diffeomorphism $\psi\in \CLKA(Q,\T_{x_0}Q)$ satisfying
		\begin{equation}\label{eq:sternberg-lin}
		\forall t \in \Grp\colon \psi \circ \Phi^t = \D_{x_0} \Phi^t \psi, \qquad \D_{x_0}\psi = \id_{\T_{x_0}Q}.
		\end{equation}
		(In writing $\D_{x_0}\psi = \id_{\T_{x_0}Q}$, we are making the standard and canonical identification $\T_{0}(\T_{x_0}Q)\cong \T_{x_0} Q$.)
	\end{Prop}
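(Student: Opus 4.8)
The plan is to obtain $\psi$ directly from Theorem~\ref{th:main-thm}, applied with target dimension $m = n \coloneqq \dim Q$ and prescribed derivative $B \coloneqq \id_{\T_{x_0}Q}$, and then to upgrade the resulting linearizing factor to a diffeomorphism by hand, exploiting global attraction of $x_0$. Fix a basis of $\T_{x_0}Q$, identifying $\T_{x_0}Q \cong \R^n \subset \C^n$. When $\Grp = \R$, the map $t \mapsto \D_{x_0}\Phi^t$ is a continuous one-parameter subgroup of $\GL(\T_{x_0}Q)$ (as $\Phi$ is $C^1$) and hence equals $e^{tA}$ for the real matrix $A \coloneqq \frac{d}{dt}\big|_{t=0}\D_{x_0}\Phi^t$; when $\Grp = \Z$, let $A$ be any complex logarithm of the invertible matrix $\D_{x_0}\Phi^1$. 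In either case $e^{tA} = \D_{x_0}\Phi^t$ for all $t \in \Grp$; moreover $\rho(e^A) = \rho(\D_{x_0}\Phi^1) < 1$ since $x_0$ is a globally attracting hyperbolic fixed point, the intertwining relation \eqref{eq:main-th-1} holds trivially because $B\,\D_{x_0}\Phi^t = \D_{x_0}\Phi^t = e^{tA} = e^{tA}\,B$, and the pair $(e^A, \D_{x_0}\Phi^1)$ equals $(\D_{x_0}\Phi^1, \D_{x_0}\Phi^1)$, so the nonresonance and spectral-spread hypotheses of Proposition~\ref{prop:sternberg} are exactly those needed to invoke both halves of Theorem~\ref{th:main-thm}; finally $\Phi \in \CLKA$ by assumption.

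Applying the existence half of Theorem~\ref{th:main-thm} (with no spread hypothesis needed when $k = \infty$, cf.\ Remark~\ref{rem:point-cinf-case}) produces $\psi \in \CLKA(Q,\C^n)$ with $\D_{x_0}\psi = \id$ and, by \eqref{eq:main-th-3} together with $e^{tA} = \D_{x_0}\Phi^t$, with $\psi \circ \Phi^t = \D_{x_0}\Phi^t\,\psi$ for all $t \in \Grp$. That $\psi$ is real-valued is automatic from Theorem~\ref{th:main-thm} when $\Grp = \R$ (both $A$ and $B$ are real); when $\Grp = \Z$ the complex conjugate $\bar\psi \in \CLKA(Q,\C^n)$ satisfies $\bar\psi \circ \Phi^1 = \overline{\D_{x_0}\Phi^1}\,\bar\psi = \D_{x_0}\Phi^1\,\bar\psi = e^A\bar\psi$ (as $\D_{x_0}\Phi^1$ is real) and $\D_{x_0}\bar\psi = \id = B$, so the uniqueness half of Theorem~\ref{th:main-thm} gives $\bar\psi = \psi$. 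Hence $\psi \in \CLKA(Q,\T_{x_0}Q)$ satisfies \eqref{eq:sternberg-lin}; and any other such $\psi'$, regarded as an element of $\CLKA(Q,\C^n)$, satisfies $\psi' \circ \Phi^1 = \D_{x_0}\Phi^1\,\psi' = e^A\psi'$ with $\D_{x_0}\psi' = \id = B$, so $\psi' = \psi$ by the uniqueness half of Theorem~\ref{th:main-thm}.

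It remains to prove that $\psi$ is a diffeomorphism, and this---the only part not handed to us by Theorem~\ref{th:main-thm}---is where I expect the main, if routine, work to lie. Note $\psi(x_0)$ is a fixed point of $v \mapsto \D_{x_0}\Phi^1 v$, so $\psi(x_0) = 0$ as $\rho(\D_{x_0}\Phi^1) < 1$. Since $\D_{x_0}\psi = \id$ and $k \geq 1$, $\psi$ restricts to a $\CLKA$-diffeomorphism from a neighbourhood $U$ of $x_0$ onto a neighbourhood of $0$; differentiating $\psi \circ \Phi^t = \D_{x_0}\Phi^t\,\psi$ at an arbitrary $x \in Q$ and choosing $t$ large enough that $\Phi^t(x) \in U$ (possible since $\Phi^t(x) \to x_0$ as $t \to \infty$) gives $\D_x\psi = (\D_{x_0}\Phi^t)^{-1}\,\D_{\Phi^t(x)}\psi\,\D_x\Phi^t$, which is invertible, so $\psi$ is a local diffeomorphism everywhere and in particular an open map. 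If $\psi(x) = \psi(y)$ then $\psi(\Phi^t x) = \psi(\Phi^t y)$ for all $t$, and for large $t$ both $\Phi^t x, \Phi^t y$ lie in $U$ where $\psi$ is injective, forcing $\Phi^t x = \Phi^t y$ and hence $x = y$. For surjectivity, $\psi(Q) = \psi(\Phi^t(Q)) = \D_{x_0}\Phi^t(\psi(Q))$ for every $t \in \Grp$ since $\Phi^t(Q) = Q$; given a ball $B_\varepsilon(0) \subseteq \psi(Q)$ and any $v \in \R^n$, we have $\D_{x_0}\Phi^t v \to 0$ as $t \to \infty$, hence $\D_{x_0}\Phi^t v \in B_\varepsilon(0) \subseteq \psi(Q)$ for large $t$, and therefore $v = (\D_{x_0}\Phi^t)^{-1}(\D_{x_0}\Phi^t v) \in (\D_{x_0}\Phi^t)^{-1}(\psi(Q)) = \psi(Q)$, so $\psi(Q) = \R^n$. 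Thus $\psi$ is a $\CLKA$ bijection with everywhere-invertible derivative, and the usual formulas for the partial derivatives of $\psi^{-1}$ in terms of $\psi^{-1}$, the derivatives of $\psi$, and the inverse Jacobian---together with closure of local $\alpha$-H\"older continuity under composition, products, and inversion---give $\psi^{-1} \in \CLKA$, so $\psi$ is a $\CLKA$-diffeomorphism, completing the proof.
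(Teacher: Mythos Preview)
Your proof is correct and follows essentially the same route as the paper's: apply Theorem~\ref{th:main-thm} with $e^{tA}=\D_{x_0}\Phi^t$ and $B=\id$, then verify by hand that the resulting $\psi$ is a diffeomorphism via local-diffeomorphism, injectivity, and surjectivity arguments driven by global attraction. Your treatment is in fact slightly more careful than the paper's in two places: you explicitly handle the existence of a (possibly complex) logarithm $A$ in the $\Grp=\Z$ case and recover real-valuedness via uniqueness rather than assuming $A$ real, and you remark on why $\psi^{-1}\in\CLKA$ (which the paper leaves implicit in ``diffeomorphism'').
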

	\begin{Rem}[Uniqueness of general linearizing conjugacies modulo a linear coordinate transformation]\label{rem:sternberg}
	Under the hypotheses of Proposition \ref{prop:sternberg}, let $L_1, L_2\colon \T_{x_0}Q\times \Grp\to \T_{x_0}Q$ be any linear dynamical systems (i.e., such that each time-$t$ map $L_i^t$ is linear)  and  $\psi_1,\psi_2\in \CLKA(Q,\T_{x_0}Q)$ be any diffeomorphisms satisfying $\psi_i \circ \Phi^t = L_i^t \psi_i$ for all $t\in \Grp$ and $i\in \{1,2\}$.
	Differentiation at the fixed point $x_0$ using the chain rule yields $\D_{x_0}\psi_i \D_{x_0}\Phi^t = L_i^t \D_{x_0}\psi_i$ for all $t\in \Grp$, so $L_i^t = B_i \D_{x_0}\Phi^t B_i^{-1}$ where $B_i\coloneqq \D_{x_0}\psi_i$.
	It follows that $B_i^{-1}\psi_i \circ \Phi^t = \D_{x_0}\Phi^t B_i^{-1}\psi_i$ for all $t\in \Grp$, so $B_i^{-1}\psi_i$ satisfies \eqref{eq:sternberg-lin} for $i\in \{1,2\}$.
	The uniqueness statement of Proposition \ref{prop:sternberg} then implies that $\psi_i = B_i \psi$, from which it follows that $\psi_1 = B_1 B_2^{-1}\psi_2$.
	\end{Rem}
	\begin{proof}
		Identifying $\T_{x_0} Q$ with $\R^n$ by choosing a basis and letting $A\in \GL(n,\C)$ be any matrix logarithm of $\D_{x_0}\Phi^1$, we apply Theorem \ref{th:main-thm} with $e^{tA} = \D_{x_0} \Phi^t$ and $B = \id_{\T_{x_0} Q}$ to obtain a unique $\psi\in \CLKA(Q,\T_{x_0} Q)$ satisfying \eqref{eq:sternberg-lin} and $\D_{x_0} \psi = \id_{\T_{x_0}Q}$.
		It remains only to show that $\psi$ is a diffeomorphism.
		To do this, we separately show that $\psi$ is injective, surjective, and a local diffeomorphism.
		
		By continuity, $\D_{x_0} \psi = \id_{\T_{x_0}Q}$ implies that $\D_x \psi$ is invertible for all $x$ in some neighborhood $U \ni x_0$.
		Since $Q = \bigcup_{t\geq 0}\Phi^{-t}(U)$ by asymptotic stability of $x_0$, \eqref{eq:sternberg-lin} and the chain rule imply that $\D_{x}\psi$ is invertible for all $x\in Q$.
		Hence $\psi$ is a local diffeomorphism.
		
		To see that $\psi$ is injective, let $U$ be a neighborhood of $x_0$ such that $\psi|_U\colon U\to \psi(U)$ is a diffeomorphism, and let $x,y \in Q$ be such that $\psi(x) = \psi(y)$.
		By asymptotic stability of $x_0$, there is $T > 0$ such that $\Phi^T(x),\Phi^T(y)\in U$, and  \eqref{eq:sternberg-lin} implies that $\psi\circ \Phi^T(x) = \psi \circ \Phi^T(y)$.
		Injectivity of $\psi|_U$ then implies that $\Phi^T(x) = \Phi^T(y)$, and injectivity of $\Phi^T$ then implies that $x = y$. 
		Hence $\psi$ is injective.
		
		To see that $\psi$ is surjective, fix any $y\in \T_{x_0}Q$ and let the neighborhood $U$ be as in the last paragraph.
		Asymptotic stability of $0$ for $\D_{x_0}\Phi \colon \T_{x_0}Q\times \Grp\to \T_{x_0}Q$ implies that there is $T > 0$ such that $\D_{x_0} \Phi^T\cdot y \in \psi(U)$, so there exists $x\in U$ with $\D_{x_0}\Phi^T\cdot y = \psi(x)$.
		Hence $y = \D_{x_0} \Phi^{-T}\cdot\psi(x) = \psi \circ \Phi^{-T}(x)$, where we have used \eqref{eq:sternberg-lin}.
		It follows that $\psi$ is surjective.
		This completes the proof.
	\end{proof}
	
	The following is an existence and uniqueness result for the $\CLKA$ Floquet normal form of an attracting hyperbolic periodic orbit of a flow, a nonlinear change of coordinates in which the dynamics become the product of a linear system with constant-rate rotation on a circle.
	References discussing the Floquet normal form include \cite[Sec.~26]{abraham1967transversal}, \cite[Sec.~I.3]{smoothInvariant}, and \cite[Sec.~4.3]{abraham2001manifolds}; the Floquet normal form is a nonlinear generalization of the classical Floquet theory of linear time-periodic systems \cite[Sec.~III.7]{hale1980ode}.
	The following result is proved using a combination of Proposition \ref{prop:sternberg} and stable manifold theory \cite{fenichel1974asymptotic,fenichel1977asymptotic,hirsch1977,ruelle1989elements} specialized to the theory of isochrons \cite{isochrons}.
	For the statement, recall that a map between manifolds is a $C^1$ embedding if it is a homeomorphism onto its image (equipped with the subspace topology) and if its derivative is everywhere one-to-one \cite[p.~21]{hirsch1976differential}; a $\CLKA$ embedding is a $\CLKA$ map which is also a $C^1$ embedding.
	A map between topological spaces is proper if the preimage of every compact subset is compact \cite[p.~610]{lee2013smooth}.
	\begin{Prop}[Existence and uniqueness of $\CLKA$ global Floquet normal forms]\label{prop:floq-norm-form}
		Fix $k\in \N_{\geq 1}\cup \{+\infty\}$ and $0 \leq \alpha \leq 1$.
		Let $\Phi\colon Q \times \R \to Q$ be a $\CLKA$ flow with $Q$ the basin of an  attracting hyperbolic $\tau$-periodic orbit with image $\Gamma \subset Q$,  where $Q$ is a smooth manifold with $\dim(Q)\geq 2$.
		Fix $x_0 \in \Gamma$ and let $E^s_{x_0}\subset \T_{x_0}Q$ denote the unique $\D_{x_0}\Phi^\tau$-invariant subspace complementary to $\T_{x_0}\Gamma$.  
		Assume that $\nu(\D_{x_0} \Phi^\tau|_{E^s_{x_0}},\D_{x_0} \Phi^\tau|_{E^s_{x_0}}) < k + \alpha$, and assume that $(\D_{x_0} \Phi^\tau|_{E^s_{x_0}},\D_{x_0} \Phi^\tau|_{E^s_{x_0}})$ is $k$-nonresonant.
		
		Then if we write $\D_{x_0}\Phi^\tau|_{E^s_{x_0}}=e^{\tau A}$ for some complex linear $A\colon (E^s_{x_0})_\C \to (E^s_{x_0})_\C$, there exists a unique, proper, $\CLKA$ embedding $\psi = (\psi_\theta, \psi_z) \colon Q\to S^1 \times (E^s_{x_0})_\C$ such that $\psi_\theta(x_0) = 1$, $(\D_{x_0}\psi_z)|_{E^s_{x_0}} = (E^s_{x_0} \hookrightarrow (E^s_{x_0})_\C)$, and 
		\begin{equation}\label{eq:floquet}
		\forall t \in \R\colon \psi_\theta \circ \Phi^t(x) = e^{2\pi i \frac{t}{\tau}}\psi_\theta(x), \qquad \psi_z\circ \Phi^t(x) = e^{tA} \psi_z(x),  
		\end{equation}
		where $S^1\subset \C$ is the unit circle and $i=\sqrt{-1}$. 
		If $A|_{E^s_{x_0}}\colon E^s_{x_0}\to E^s_{x_0}\subset (E^s_{x_0})_\C$ is real, then $\psi_z\in \CLKA(Q, E^s_{x_0})$ is real, and the codomain-restricted map $\psi \colon Q \to S^1 \times E^s_{x_0}\subset S^1\times (E^s_{x_0})_\C$ is a diffeomorphism. 
	\end{Prop}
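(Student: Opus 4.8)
The plan is to realize $(Q,\Phi)$ as a mapping torus of the return dynamics on a single isochron, linearize those return dynamics with Proposition~\ref{prop:sternberg}/Theorem~\ref{th:main-thm}, and transport the result. First I would invoke stable manifold theory for the normally hyperbolic orbit $\Gamma$ and the attendant isochron theory \cite{fenichel1974asymptotic,hirsch1977,isochrons} to record the following: the strong stable manifold $\mathcal{I}:=W^{ss}(x_0)$ is a $\CLKA$ submanifold with $\T_{x_0}\mathcal{I}=E^s_{x_0}$, transverse to $f$ along $\mathcal{I}$, invariant under $\Phi^\tau$, with $\Phi^t(\mathcal{I})=\mathcal{I}$ iff $t\in\tau\Z$; distinct isochrons are disjoint; and, by global attractivity of $\Gamma$, every point of $Q$ lies on some isochron. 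Hence $F\colon\mathcal{I}\times\R\to Q$, $F(y,s):=\Phi^s(y)$, is a surjective $\CLKA$ local diffeomorphism (using transversality of $\mathcal{I}$ to $f$) which descends to a $\CLKA$ diffeomorphism $\bar F$ from the mapping torus $M:=(\mathcal{I}\times\R)/{\sim}$, $(y,s)\sim(\Phi^\tau y,s-\tau)$, onto $Q$ (disjointness of isochrons gives injectivity on $M$). Moreover $\Phi^\tau|_{\mathcal{I}}$ generates a $\CLKA$ discrete dynamical system on $\mathcal{I}$ (equipped with a compatible smooth structure) with globally attracting hyperbolic fixed point $x_0$ and linearization $\D_{x_0}\Phi^\tau|_{E^s_{x_0}}$, whose spectral-spread and $k$-nonresonance properties are exactly those assumed in the proposition.

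For existence I would apply Proposition~\ref{prop:sternberg} to $\Phi^\tau|_{\mathcal{I}}$ to obtain the unique $\CLKA$ diffeomorphism $h_\R\colon\mathcal{I}\xrightarrow{\sim}E^s_{x_0}$ with $\D_{x_0}h_\R=\id$ and $h_\R\circ(\Phi^\tau|_{\mathcal{I}})^n=(\D_{x_0}\Phi^\tau|_{E^s_{x_0}})^n h_\R$, and then set $h:=\iota\circ h_\R$ where $\iota\colon E^s_{x_0}\hookrightarrow(E^s_{x_0})_\C$ is the inclusion. Since $\iota$ intertwines $\D_{x_0}\Phi^\tau|_{E^s_{x_0}}$ with its complexification $e^{\tau A}$, the map $h$ is a proper $\CLKA$ embedding with $\D_{x_0}h=\iota$ and $h\circ(\Phi^\tau|_{\mathcal{I}})^n=e^{n\tau A}h$ (it is also what Theorem~\ref{th:main-thm} produces for $\Phi^\tau|_{\mathcal{I}}$ with $B=\iota$). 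I would then define $\psi\colon Q\to S^1\times(E^s_{x_0})_\C$ by $\psi(x):=(e^{2\pi i s/\tau},\,e^{sA}h(\Phi^{-s}x))$ for any $s$ with $\Phi^{-s}(x)\in\mathcal{I}$; the conjugacy property of $h$ together with $\Phi^t(\mathcal{I})=\mathcal{I}\iff t\in\tau\Z$ makes this well defined, and since $\psi=\tilde\psi\circ\bar F^{-1}$ for the evident $\CLKA$ map $\tilde\psi\colon M\to S^1\times(E^s_{x_0})_\C$ induced by $[y,s]\mapsto(e^{2\pi i s/\tau},e^{sA}h(y))$, we get $\psi\in\CLKA$. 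A direct computation gives $\psi_\theta(x_0)=1$, $(\D_{x_0}\psi_z)|_{E^s_{x_0}}=\iota$, and both identities of \eqref{eq:floquet}; when $A$ is real, $h=\iota\circ h_\R$ is $E^s_{x_0}$-valued and $e^{sA}$ preserves $E^s_{x_0}$, so $\psi_z$ is real. For the embedding claim I would note that $(z,s)\mapsto(e^{2\pi i s/\tau},e^{sA}z)$ identifies the mapping torus of $((E^s_{x_0})_\C,e^{\tau A})$ with $S^1\times(E^s_{x_0})_\C$, under which $\psi\circ\bar F$ becomes the map on mapping tori induced by $h$: this is a proper embedding because $h$ is, and is onto — hence a diffeomorphism $Q\cong S^1\times E^s_{x_0}$ — exactly when $h=h_\R$ is onto $E^s_{x_0}$, i.e.\ when $A$ is real.

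For uniqueness, let $\psi'=(\psi'_\theta,\psi'_z)$ also satisfy the conclusion. Since $\psi_\theta,\psi'_\theta$ are $S^1$-valued, $g:=\psi'_\theta/\psi_\theta$ is a continuous $S^1$-valued function with $g\circ\Phi^t=g$ for all $t$ by \eqref{eq:floquet}; being constant along trajectories it is constant along $\Gamma$ (whose orbits are dense in $\Gamma$), hence by continuity and global attractivity of $\Gamma$ constant on $Q$, equal to $g(x_0)=1$, so $\psi'_\theta=\psi_\theta$. Restricting $\psi'_z$ to $\mathcal{I}$ yields a $\CLKA$ solution on $\mathcal{I}$ of $(\psi'_z|_{\mathcal{I}})\circ\Phi^\tau|_{\mathcal{I}}=e^{\tau A}(\psi'_z|_{\mathcal{I}})$ with $\psi'_z(x_0)=0$ (forced since $e^{tA}\psi'_z(x_0)=\psi'_z(x_0)\to 0$) and $\D_{x_0}(\psi'_z|_{\mathcal{I}})=\iota$; the uniqueness statement of Theorem~\ref{th:main-thm} then forces $\psi'_z|_{\mathcal{I}}=h=\psi_z|_{\mathcal{I}}$, and the second identity of \eqref{eq:floquet} propagates this equality along the flow to give $\psi'_z=\psi_z$ on all of $Q$.

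I expect the main obstacle to be the first step — establishing at the $\CLKA$ level that $\mathcal{I}$ is a $\CLKA$ submanifold and that $\bar F\colon M\to Q$ is a $\CLKA$ diffeomorphism, i.e.\ realizing $Q$ as the mapping torus of $\Phi^\tau|_{\mathcal{I}}$ — which relies entirely on stable manifold theory for the normally hyperbolic periodic orbit $\Gamma$. Once that presentation is in hand, the construction of $\psi$ and both the existence and uniqueness conclusions reduce to Theorem~\ref{th:main-thm}/Proposition~\ref{prop:sternberg} together with routine mapping-torus bookkeeping.
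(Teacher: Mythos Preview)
Your proposal is correct and takes essentially the same approach as the paper: both use that the isochron $W^s_{x_0}$ is a $\CLKA$ submanifold, apply Proposition~\ref{prop:sternberg} to the return map $\Phi^\tau|_{W^s_{x_0}}$, extend along the flow, and prove uniqueness of $\psi_\theta$ via the flow-invariant quotient and of $\psi_z$ via restriction to the isochron and Theorem~\ref{th:main-thm}. The only cosmetic differences are that the paper invokes Theorem~\ref{th:main-thm-per} for the existence of $\psi_z$ (whose proof is precisely your isochron-extension construction) and writes down $\psi^{-1}$ explicitly to verify properness, whereas you package the same content in mapping-torus language.
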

	\begin{proof}
		Theorem \ref{th:main-thm-per} implies that a map $\psi_z\in \CLKA(Q,(E^s_{x_0})_\C)$ satisfying all applicable conclusions above exists.
		Letting $\Ws_{x_0}$ denote the global strong stable manifold (isochron) through $x_0$, we have $\T_{x_0}\Ws_{x_0}= E^s_{x_0}$ and $\Phi^\tau(\Ws_{x_0}) = \Ws_{x_0}$.
		Since $\Ws_{x_0}$ is the stable manifold for the fixed point $x_0$ of the $\CLKA$ diffeomorphism $\Phi^\tau$, it follows that $\Ws_{x_0}$ is a $\CLKA$ submanifold \cite[pp.~2, 27; Thm~6.1]{ruelle1989elements} which is properly embedded (rather than merely immersed) in $Q$ because $\Gamma$ is stable \cite[pp.~4208--4209]{kvalheim2018global}.
		Proposition \ref{prop:sternberg} then implies that $\psi_z|_{\Ws_{x_0}}\colon \Ws_{x_0}\to E^s_{x_0} \subset (E^s_{x_0})_\C$ is a diffeomorphism onto its image $E^s_{x_0}$.\footnote{Strictly speaking, Proposition \ref{prop:sternberg} was---for simplicity---stated for smooth manifolds.
		Hence in order to apply Proposition \ref{prop:sternberg} here (and also in the proofs of Theorems \ref{th:main-thm-per} and \ref{th:classify-per}) we must first give $\Ws_{x_0}$ a compatible $C^\infty$ structure, but this can always be done \cite[Thm~2.2.9]{hirsch1976differential}, so we will not mention this anymore.}
		Since $E^s_{x_0}$ is closed in $(E^s_{x_0})_\C$, it follows that $\psi_z|_{\Ws_{x_0}}\colon \Ws_{x_0}\to (E^s_{x_0})_\C$ is a proper $\CLKA$ embedding \cite[Prop.~A.53(c)]{lee2013smooth}.

		Define $\exp_A\colon (E^s_{x_0})_\C\times \R\to (E^s_{x_0})_\C$ via $\exp_A(z,t)\coloneqq e^{tA}z$, let $K\subset (E^s_{x_0})_\C$ be any subset, define $K_\tau\coloneqq \exp_A(K\times [-\tau,0])$, and define $J_{\tau} \coloneqq (\psi_z|_{\Ws_{x_0}})^{-1}(K_\tau)$.
		From the second equation of \eqref{eq:floquet}, we have that $$\psi_z^{-1}(K) = \bigcup_{t\in [0,\tau]}\Phi^t\left((\psi_z|_{\Ws_{x_0}})^{-1}(e^{-tA}K)\right) \subset \Phi(J_\tau\times [0,\tau]).$$
		If $K$ is compact, then so are $K_\tau$ and $J_\tau$ by continuity of $\exp_A$, properness of $\psi_z|_{\Ws_{x_0}}$, and the fact that $\Ws_{x_0}$ is a closed subset of $Q$ since it is properly embedded \cite[Prop.~5.5]{lee2013smooth}.
		Thus, if $K$ is compact, continuity implies that $\psi_z^{-1}(K)$ is a closed subset of the compact set $\Phi(J_\tau\times [0,\tau])$ and is therefore compact.
		This establishes that $\psi_z\colon Q\to (E^s_{x_0})_\C$ is proper.

       Since the vector field generating $\Phi$ intersects $\Ws_{x_0}$ transversely, a standard argument \cite[p.~243]{hirsch1974differential} and the $\CLKA$ implicit function theorem \cite[Cor.~A.4]{eldering2013normally} imply that a real-valued $\CLKA$ ``time-to-impact $\Ws_{x_0}$'' function can be defined on a neighborhood of any point.
		Using these facts, one can show that the function $\psi_\theta\colon Q\to S^1$ defined via $\psi_\theta(\Ws_{x_0})\equiv 1$ and $\psi_\theta(\Phi^t(\Ws_{x_0})) \equiv e^{2\pi i \frac{t}{\tau}}$ is a $\CLKA$ function.
		By construction, this function $\psi_\theta$ satisfies $\psi_\theta(x_0) = 1$ and \eqref{eq:floquet}.
		$\psi_\theta$ is unique among all continuous functions satisfying these equalities, since if $\tilde{\psi}_\theta$ is any other such function, then asymptotic stability of $\Gamma$ implies that the quotient $(\psi_\theta/\tilde{\psi}_\theta)$ is constant on $Q$, and since $(\psi_\theta/\tilde{\psi}_\theta)(x_0) = 1$ it follows that $\tilde{\psi}_\theta \equiv \psi_\theta$.        
         
        Note that, for all $t\in \R$ and $x\in \Ws_{\Phi^{t}(x_0)}= \Phi^t(\Ws_{x_0})$,  $$\ker\left(\D_x \psi_\theta\right)=\T_x \Ws_{\Phi^{t}(x_0)}\qquad \textnormal{and}\qquad \ker\left(\D_x \psi_z\right)\cap \T_x \Ws_{\Phi^t(x_0)}=\{0\},$$ with the second equality following since $\psi_z|_{\Ws_{\Phi^t(x_0)}} = e^{tA}\circ \psi_z|_{\Ws_{x_0}}\circ \Phi^{-t}|_{\Ws_{\Phi^t(x_0)}}$ is a $\CLKA$ embedding.
        It follows that $\psi\coloneqq (\psi_\theta, \psi_z)\colon Q\to S^1\times (E^s_{x_0})_\C$ is an immersion, i.e., that $\D_x \psi$ is injective for all $x\in Q$.
        Furthermore, $\psi$ is injective since the restriction of $\psi_z$ to any level set $\Ws_{\Phi^t(x_0)}$ of $\psi_{\theta}$ is the composition of injective maps $e^{tA} \circ \psi_z|_{\Ws_{x_0}} \circ \Phi^{-t}|_{\Ws_{\Phi^t(x_0)}}$.
        Let $\pi_z\colon S^1\times (E^s_{x_0})_\C\to (E^s_{x_0})_\C$ be projection onto the second factor. 
        Since $\psi^{-1}(K)\subset \psi_z^{-1}(\pi_z(K))$ for any subset $K$, properness of $\psi_z$ and continuity of $\pi_z$ and $\psi$ imply that $\psi$ is also proper.
        Since (i) proper maps between manifolds are closed maps \cite[Thm~A.57]{lee2013smooth}, (ii) closed injective continuous maps are homeomorphisms onto their images \cite[Lem.~A.52.C]{lee2013smooth}, and (iii) $\psi$ is a $\CLKA$ injective immersion, it follows that $\psi$ is a proper $\CLKA$ embedding.
		
		If $A$ is real, then the image of the proper embedding $\psi$ is contained in $S^1\times E^s_{x_0}\subset S^1\times (E^s_{x_0})_\C$.
		Since $\dim(S^1\times E^s_{x_0})=\dim(Q)$, and since $C^1$ proper embeddings between manifolds of the same dimension are both open and closed maps \cite[Prop.~4.28,~Thm~A.57]{lee2013smooth}, it follows that the image of $\psi$ is both open and closed in $S^1\times E^s_{x_0}$.
		Since $S^1\times E^s_{x_0}$ is connected, it follows that the image of $\psi$ is all of $S^1\times E^s_{x_0}$ \cite[Prop.~A.39(e)]{lee2013smooth}.
		Thus, $\psi$ is a $\CLKA$ diffeomorphism onto $S^1\times E^s_{x_0}$ if $A$ is real.
		This completes the proof.	
		\end{proof}	
	
	\subsection{Principal Koopman eigenfunctions, isostables, and isostable coordinates}\label{sec:app-p-eigs}
	Given a $C^1$ dynamical system $\Phi\colon Q\times \Grp \to Q$, where $Q$ is a smooth manifold and either $\Grp = \Z$ or $\Grp = \R$, we say that $\psi \colon Q\to \C$ is a Koopman \concept{eigenfunction} if $\psi$ is not identically zero and satisfies
	\begin{equation}\label{eq:koopman-efunc}
	\forall t \in \Grp\colon \psi \circ \Phi^t = e^{\mu t} \psi
	\end{equation}
	for some $\mu \in \C$.	
	The following are intrinsic definitions of principal eigenfunctions and the principal algebra which extend the definitions for linear systems given in \cite[Def.~2.2--2.3]{mohr2016koopman}; a more detailed comparison is given later in Remark \ref{rem:mohr-mezic}.
	The condition $\psi|_\Gamma \equiv 0$ was motivated in part by the definition of a certain space $\F_{A_c}$ of functions in \cite[p.~3358]{mauroy2016global}.
	\begin{Def}\label{def:principal-eigenfunction}
		If $Q$ is the basin of an asymptotically stable fixed point $x_0\in Q$ for $\Phi$, we say that an eigenfunction $\psi \in C^1(Q)$ is a \concept{principal} eigenfunction if $\psi(x_0) = 0$ and $\D_{x_0}\psi \neq 0$.
		If instead $Q$ is the basin of an asymptotically stable periodic orbit with image $\Gamma \subset Q$ for $\Phi$, we say that an eigenfunction $\psi \in C^1(Q)$ is a \concept{principal} eigenfunction if $\psi|_{\Gamma}\equiv 0$ and $\D_{x_0}\psi \neq 0$ for all $x_0\in \Gamma$.\footnote{By \eqref{eq:koopman-efunc} and the chain rule, it suffices to assume there exists one point $x_0\in \Gamma$ such that $\psi(x_0)\neq 0$ and $\D_{x_0}\psi \neq 0$.}
        In either case, we define the $\CLKA$ \concept{principal algebra} $\A^{k,\alpha}_{\Phi}$ to be the complex subalgebra of $\CLKA(Q,\C)$ generated by all $\CLKA$ principal eigenfunctions.
	\end{Def}

	Given a (real or complex) linear self-map $Y\colon V\to V$, we say that a linear map $w\colon V\to \C$ is a \concept{left eigenvector} of $Y$ with \concept{eigenvalue} $\lambda\in \C$ if $wY = \lambda w$. 
	(If a basis is chosen for $V$, then $Y$ can be identified with a matrix and $w$ can be identified with a row vector so that $wY = \lambda w$ in the usual sense of matrix multiplication.)
	Differentiating \eqref{eq:koopman-efunc} and using the chain rule immediately yields Propositions \ref{prop:p-eig-evec-point} and \ref{prop:p-eig-evec-cycle}, which have previously appeared in the literature (see, e.g., the proof of \cite[Prop.~2]{mauroy2016global}; our stability assumptions are for convenience of exposition and are not necessary).
	\begin{Prop}\label{prop:p-eig-evec-point}
		Let $Q$ be the basin of an asymptotically stable fixed point $x_0$ for the $C^1$ dynamical system $\Phi\colon Q\times \Grp \to Q$. If $\psi$ is a principal Koopman eigenfunction for $\Phi$ satisfying \eqref{eq:koopman-efunc} with exponent $\mu\in \C$, then for any $t\in \Grp$, it follows that $\D_{x_0}\psi$ is a left eigenvector of $\D_{x_0} \Phi^t$ with eigenvalue $e^{\mu t}$. 	
	\end{Prop}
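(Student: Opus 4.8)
The plan is to differentiate the eigenfunction identity \eqref{eq:koopman-efunc} at the fixed point and pass to the dual (adjoint) side. First I would fix $t\in\Grp$ and regard $\psi\circ\Phi^t = e^{\mu t}\psi$ as an equality of $C^1$ maps $Q\to\C$. Differentiating both sides at $x_0$ and applying the chain rule yields
\begin{equation*}
\D_{\Phi^t(x_0)}\psi\circ\D_{x_0}\Phi^t = e^{\mu t}\,\D_{x_0}\psi
\end{equation*}
as linear maps $\T_{x_0}Q\to\T_{\psi(x_0)}\C$. Since $x_0$ is a fixed point, $\Phi^t(x_0)=x_0$, so $\D_{\Phi^t(x_0)}\psi = \D_{x_0}\psi$, and making the canonical identification $\T_{\psi(x_0)}\C\cong\C$ the functional $\D_{x_0}\psi$ becomes the covector $d\psi(x_0)\in(\T^*_{x_0}Q)_\C$.

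Next I would interpret the resulting identity $\,(d\psi(x_0))\circ\D_{x_0}\Phi^t = e^{\mu t}\,d\psi(x_0)\,$ on the dual side. Recalling that the (complexified) adjoint $(\D_{x_0}\Phi^t)^*\colon(\T^*_{x_0}Q)_\C\to(\T^*_{x_0}Q)_\C$ acts by precomposition, $\xi\mapsto\xi\circ\D_{x_0}\Phi^t$, the displayed equality reads exactly
\begin{equation*}
(\D_{x_0}\Phi^t)^*\,d\psi(x_0) = e^{\mu t}\,d\psi(x_0).
\end{equation*}
Hence $d\psi(x_0)$ satisfies the eigenvalue equation for $(\D_{x_0}\Phi^t)^*$ with eigenvalue $e^{\mu t}$; to conclude it is an \emph{eigenvector} I would invoke the nonvanishing clause of Definition~\ref{def:principal-eigenfunction}, namely \eqref{eq:p-eig-def} applied to $M=\{x_0\}$, which gives $\D_{x_0}\psi\neq 0$, equivalently $d\psi(x_0)\neq 0$.

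I expect no real obstacle here beyond careful bookkeeping with the canonical identifications $\C\cong\T_{\psi(x_0)}\C$ and $\T_0(\T_{x_0}Q)\cong\T_{x_0}Q$-type conventions, and with the definition of the adjoint on complexified cotangent spaces; the only substantive input is that the principal eigenfunction hypothesis supplies exactly the nonvanishing needed to upgrade ``solution of the eigenvalue equation'' to ``eigenvector.'' When $\Grp=\R$ one could additionally differentiate in $t$ at $t=0$ to obtain the infinitesimal version $(\D_{x_0}f)^*\,d\psi(x_0) = \mu\, d\psi(x_0)$ for the generating vector field $f$, but this is not required for the statement as given.
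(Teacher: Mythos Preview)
Your proposal is correct and follows exactly the approach the paper indicates: differentiate \eqref{eq:koopman-efunc} at $x_0$, apply the chain rule together with $\Phi^t(x_0)=x_0$, and invoke the nonvanishing condition in Definition~\ref{def:principal-eigenfunction} to conclude that $d\psi(x_0)$ is a genuine eigenvector. The paper states this proposition without a written proof, noting only that it follows immediately from differentiating and the chain rule, which is precisely what you have done.
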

	
	\begin{Prop}\label{prop:p-eig-evec-cycle}
		Let $Q$ be the basin of an asymptotically stable $\tau$-periodic orbit with image $\Gamma\subset Q$ for the $C^1$ dynamical system $\Phi\colon Q\times \R \to Q$.		
		If $\psi$ is a principal Koopman eigenfunction for $\Phi$ satisfying \eqref{eq:koopman-efunc} with exponent $\mu\in \C$, then for any $x_0 \in \Gamma$, it follows that $\D_{x_0}\psi$ is a left eigenvector of $\D_{x_0}\Phi^\tau$ with eigenvalue $e^{\mu \tau}$; in particular, $e^{\mu \tau}$ is a Floquet multiplier for $\Gamma$.
	\end{Prop}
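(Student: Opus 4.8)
The plan is to differentiate the Koopman eigenfunction identity \eqref{eq:koopman-efunc} at the point $x_0\in\Gamma$ and read off the resulting linear-algebraic statement. Fix $x_0\in\Gamma$ and $t\in\R$. Applying the chain rule to $\psi\circ\Phi^t=e^{\mu t}\psi$ and evaluating the derivative at $x_0$ gives
\begin{equation*}
\D_{x_0}(\psi\circ\Phi^t)=\D_{\Phi^t(x_0)}\psi\circ \D_{x_0}\Phi^t=e^{\mu t}\,\D_{x_0}\psi.
\end{equation*}
The first subtlety is that $\Phi^t(x_0)$ need not equal $x_0$ for general $t$, since $x_0$ lies on a periodic orbit rather than being a fixed point. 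So I would specialize to $t=\tau$, where $\Phi^\tau(x_0)=x_0$ by $\tau$-periodicity; then the identity becomes $\D_{x_0}\psi\circ\D_{x_0}\Phi^\tau=e^{\mu\tau}\D_{x_0}\psi$, an identity of linear maps $\T_{x_0}Q\to\C$. Passing to complexifications and rewriting in terms of the adjoint (transpose) map, this says exactly that the covector $d\psi(x_0)\in(\T_{x_0}^*Q)_\C$ satisfies $(\D_{x_0}\Phi^\tau)^*\,d\psi(x_0)=e^{\mu\tau}\,d\psi(x_0)$, using the canonical identification $\C\cong\T_{\psi(x_0)}\C$ that turns $\D_{x_0}\psi$ into the covector $d\psi(x_0)$.

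To conclude that $d\psi(x_0)$ is genuinely an \emph{eigenvector} — and hence that $e^{\mu\tau}$ is an eigenvalue of $(\D_{x_0}\Phi^\tau)^*$, i.e.\ a Floquet multiplier for $\Gamma$ — I must verify $d\psi(x_0)\neq 0$. This is where the \emph{principal} hypothesis enters: by Definition \ref{def:principal-eigenfunction} (or its weaker minimal-case form \eqref{eq:p-eig-def-weaker}), a principal eigenfunction satisfies $\D_x\psi\neq 0$ for all $x\in M=\Gamma$, in particular $\D_{x_0}\psi\neq 0$, so $d\psi(x_0)\neq 0$. Since $(\D_{x_0}\Phi^\tau)^*$ and $\D_{x_0}\Phi^\tau$ have the same eigenvalues (a matrix and its transpose share a characteristic polynomial), $e^{\mu\tau}\in\textnormal{spec}(\D_{x_0}\Phi^\tau)$, which is precisely the statement that $e^{\mu\tau}$ is a Floquet multiplier of $\Gamma$.

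The only remaining loose end is to handle general $t\in\R$ in the statement ``for any $x_0\in\Gamma$'' and ``for any $t$'' — but in fact the proposition as stated asserts the conclusion only with the eigenvalue $e^{\mu\tau}$ and the map $(\D_{x_0}\Phi^\tau)^*$, so no further work with arbitrary $t$ is needed; the single computation at $t=\tau$ suffices. I do not anticipate a serious obstacle here: the argument is a one-line differentiation plus the bookkeeping of identifying $\D_{x_0}\psi$ with the covector $d\psi(x_0)$ and invoking the principal condition for nonvanishing. If anything, the mild care required is in the complexification bookkeeping (that differentiating the real eigenfunction relation and then complexifying is the same as working with the complexified adjoint from the start), which is routine. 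This mirrors the proof of Proposition \ref{prop:p-eig-evec-point} verbatim, with $\Phi^t$ replaced by $\Phi^\tau$ and the fixed point replaced by the periodic point $x_0$.
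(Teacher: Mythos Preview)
Your proposal is correct and follows essentially the same approach as the paper: the paper simply states that Propositions \ref{prop:p-eig-evec-point} and \ref{prop:p-eig-evec-cycle} follow immediately by differentiating \eqref{eq:koopman-efunc} and applying the chain rule, which is exactly what you carry out in detail (specializing to $t=\tau$, invoking the principal condition for nonvanishing, and identifying the resulting covector equation with the adjoint eigenvector statement).
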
	
	
	\begin{Rem}\label{rem:koop-eig-modulus-for-stable}
	For a dynamical system with $Q$ the basin of an attracting compact invariant set $M$, any continuous eigenfunction defined on $Q$ satisfying \eqref{eq:koopman-efunc} with exponent $\mu \in \C$ must have $|e^{\mu}|\leq 1$.
    If this attracting set $M$ is furthermore a hyperbolic fixed point, then there is the stronger observations that either $e^{\mu} = 1$ or $|e^{\mu}|< 1$.
    These observations are straightforward consequences of continuity and \eqref{eq:koopman-efunc}.
	\end{Rem}
	\begin{Rem}\label{rem:koopman-spectral-spread}
	Let $\mu \in \C$ and consider the case that $Q$ is the basin of an attracting hyperbolic fixed point $x_0\in Q$ for the $C^1$ dynamical system $\Phi\colon Q\times \Grp \to Q$. 
	Recall that the spectral radius $\rho(\D_{x_0}\Phi^1)\in (0,1)$ of $\D_{x_0}\Phi^1$ is defined to be the largest modulus (absolute value) of the eigenvalues of (the complexification of) $\D_{x_0}\Phi^1$. 
	From Equation \eqref{eq:spread-def} of Definition \ref{def:nonres-spread}, the spectral spread $\nu(e^\mu,\D_{x_0}\Phi^1)$ satisfies
	\begin{equation}
	\nu(e^\mu,\D_{x_0}\Phi^1) = \min\left\{r\in \R\colon |e^\mu|\geq\left(\rho\left(\D_{x_0}\Phi^1\right)\right)^r\right\}.
	\end{equation}
	It follows that, for any $r\in \R\cup \{+\infty\}$,
	\begin{equation}\label{eq:koopman-spread-iff}
	\nu(e^\mu,\D_{x_0}\Phi^1)<r \qquad \iff \qquad  |e^\mu|>\left(\rho\left(\D_{x_0}\Phi^1\right)\right)^r
	\end{equation}	
	where $\left(\rho\left(\D_{x_0}\Phi^1\right)\right)^{+\infty}\coloneqq 0$ in the special case that $r=+\infty$.
	\end{Rem}
	In light of Remarks \ref{rem:koop-eig-modulus-for-stable} and \ref{rem:koopman-spectral-spread} (taking $r = k+\alpha$ in \eqref{eq:koopman-spread-iff}), Proposition \ref{prop:koopman-cka-fix} below is now nearly immediate from Theorem \ref{th:main-thm} and Proposition \ref{prop:p-eig-evec-point}.
	We prove the non-immediate portion following the statement of Proposition \ref{prop:koopman-cka-fix}.
    We emphasize that, when $k=+\infty$ in Proposition \ref{prop:koopman-cka-fix}, the spectral spread condition $$|e^\mu|>\left(\rho\left(\D_{x_0}\Phi^1\right)\right)^{k+\alpha}= \left(\rho\left(\D_{x_0}\Phi^1\right)\right)^{+\infty}\coloneqq 0$$ is automatically satisfied since $|e^\mu|>0$ for all $\mu \in \C$ (cf. Remark \ref{rem:point-cinf-case}).
    
   	For the case $\Grp = \R$, the $k$-nonresonance assumptions in the following result can be replaced by the slightly less restrictive nonresonance condition in Remark \ref{rem:weaker-nonresonance-real-time}.
   	Applications-oriented readers may find it helpful to consult Example \ref{ex:intuition} with $m=1$ for remarks relevant to the following proposition.

   	\begin{Prop}[Existence and uniqueness of $\CLKA$ Koopman eigenvalues and principal eigenfunctions for a point attractor]\label{prop:koopman-cka-fix}
   		Let $\Phi\colon Q \times \Grp \to Q$ be a $C^{1}$ dynamical system with $Q$ the basin of an attracting hyperbolic fixed point $x_0\in Q$,  where $Q$ is a smooth manifold with $\dim(Q)\geq 1$ and either $\Grp = \Z$ or $\Grp = \R$.
   		Fix $k \in \N_{\geq 1}\cup\{+\infty\}$ and $0 \leq \alpha \leq 1$, and assume that the spectral radius $\rho\left(\D_{x_0}\Phi^1\right)\in (0,1)$ satisfies 
   		$$|e^\mu|>\left(\rho\left(\D_{x_0}\Phi^1\right)\right)^{k+\alpha}$$
   		in all of the following statements.	
			
		\emph{\textbf{Uniqueness of Koopman eigenvalues and principal eigenfunctions.}}
		Let $\psi_1\in \CLKA(Q,\C)$ be any Koopman eigenfunction satisfying \eqref{eq:koopman-efunc} with exponent $\mu \in \C$.
		\begin{enumerate}
			\item\label{item:uniq-thm-1} Then there exists $m = (m_1,\ldots,m_n)\in \N_{\geq 0}^n$ such that $$e^{\mu} =  e^{m\cdot \lambda},$$ where $e^{\lambda_1},\ldots,e^{\lambda_n}$ are the eigenvalues of $\D_{x_0} \Phi^1$ repeated with multiplicities and $\lambda\coloneqq (\lambda_1,\ldots,\lambda_n)$. 
			\item\label{item:uniq-thm-2} Assume that $\psi_1$ is a principal eigenfunction so that $e^{\mu} \in \textnormal{spec}(\D_{x_0}\Phi^1)$, and assume that $(e^\mu,\D_{x_0}\Phi^1)$ is $k$-nonresonant.
			Then $\psi_1$ is uniquely determined by $\D_{x_0}\psi_1$, and if $e^{\mu}$ and $\D_{x_0}\psi_1$ are real, then $\psi_1\colon Q\to \R\subset \C$ is real.
			In particular, if $e^{\mu}$ is an algebraically simple eigenvalue of (the complexification of) $\D_{x_0}\Phi^1$ and if $\psi_2$ is any other principal eigenfunction satisfying \eqref{eq:koopman-efunc} with the same exponent $\mu$, then there exists $c\in \C\setminus \{0\}$ such that $$\psi_1 = c\psi_2.$$		
		\end{enumerate}   		
   		
   		\emph{\textbf{Existence of principal eigenfunctions.}} Assume that $\Phi \in \CLKA$ and that $(e^\mu,\D_{x_0}\Phi^1)$ is $k$-nonresonant.
   		Let $w\colon \T_{x_0} Q\to \C$ be any left eigenvector of $\D_{x_0}\Phi^1$ with eigenvalue $e^\mu$ such that
   		$$\forall t\in \Grp\colon w\D_{x_0}\Phi^t = e^{\mu t}w.$$
   		\begin{enumerate}
   			\item Then there exists a unique principal eigenfunction $\psi \in \CLKA(Q,\C)$ satisfying \eqref{eq:koopman-efunc} with exponent $\mu$ and satisfying  $\D_{x_0}\psi = w$.
   			\item  In fact, if $P\in \CLKA(Q,\C)$ is any ``approximate eigenfunction'' satisfying $\D_{x_0}P = w$ and 
   			\begin{equation}\label{eq:Koop-main-approx}
   			P \circ \Phi^1 = e^\mu P + R
   			\end{equation} 
   			with $\D_{x_0}^i R = 0$ for all integers $0\leq i < k+\alpha$, then 
   			\begin{equation}\label{eq:Koop-main-converge}
   			\psi = \lim_{t\to \infty} e^{-\mu t} P \circ \Phi^t
   			\end{equation}
   			in the topology of $C^{k,\alpha}$-uniform convergence on compact subsets of $Q$  if $k < +\infty$, and in the topology of $C^{k'}$-uniform convergence on compact subsets of $Q$ for any $k'\in \N_{\geq 1}$ if $k=+\infty$.
   		\end{enumerate}
   	\end{Prop}
   	\begin{proof}
   	All of the claims are immediate from Theorem \ref{th:main-thm} and Proposition \ref{prop:p-eig-evec-point} except for the first uniqueness claim, which we now justify.
   	Suppose (to obtain a contradiction) that the first uniqueness claim does not hold.
   	Then (i) $e^{\mu}$ is not an eigenvalue of $\D_{x_0}\Phi^1$ and (ii) $(e^{\mu},\D_{x_0}\Phi^1)$ is $\infty$-nonresonant. 
   	The first observation together with Proposition \ref{prop:p-eig-evec-point} implies $\psi$ is not a principal eigenfunction, i.e., $\D_{x_0}\psi=0$.
   	From Remark \ref{rem:koopman-spectral-spread} and the uniqueness portion of Theorem~\ref{th:main-thm} (with $A=\mu$ and $B=0$), it follows that $\psi\equiv 0$ is identically zero.
   	However, Koopman eigenfunctions are (by definition) not identically zero, so we have obtained a contradiction.
   	\end{proof}

   \begin{Rem}[Laplace averages]\label{rem:laplace}
   	Given $P\colon Q\to \C$ and $\Grp = \R$, in the Koopman literature the \concept{Laplace average} $$\psi\coloneqq  \lim_{T\to\infty} \frac{1}{T}\int_0^T e^{-\mu t} P\circ\Phi^t\, dt$$ is used to produce a Koopman eigenfunction satisfying \eqref{eq:koopman-efunc} with exponent $\mu$ as long as the limit exists \cite{mauroy2013isostables,mohr2014construction}.
   	(When $\Grp = \Z$, a similar definition can be given with a sum replacing the integral.)
   	Since convergence of the limit \eqref{eq:Koop-main-converge} easily implies convergence of the Laplace average to the same limiting function, the existence portion of Proposition \ref{prop:koopman-cka-fix} gives sufficient conditions under which the Laplace average of $P$ exists and is equal to a unique $\CLKA$ principal eigenfunction $\psi$ satisfying $\D_{x_0}\psi = \D_{x_0}P$.
   \end{Rem}
   	
	\begin{Rem}[Isostables and isostable coordinates]\label{rem:isostable}
		
		It follows from the discussion after \cite[Def.~2]{mauroy2013isostables} that the definition of \concept{isostables} given in that paper---for $\Phi$ having an attracting hyperbolic fixed point $x_0$ with basin of attraction $Q$ and with $\D_{x_0}\Phi^1$ having a unique eigenvalue $e^{\mu_1}$ (or complex conjugate pair of eigenvalues) of largest modulus---is equivalent to the following. 
		Isostables as defined in \cite{mauroy2013isostables} are the level sets of the modulus $|\psi_1|$ of a principal eigenfunction $\psi_1$ defined on $Q$ and satisfying \eqref{eq:koopman-efunc} with exponent $\mu = \mu_1$.
		Because $e^{\mu_1}$ is the ``slowest'' eigenvalue of $\D_{x_0}\Phi^1$, Proposition \ref{prop:koopman-cka-fix} implies that, for any $\alpha > 0$, any such $\psi_1\in \CL^{1,\alpha}(Q,\C)$ satisfying \eqref{eq:koopman-efunc} with exponent $\mu_1$ is unique modulo scalar multiplication for a $C^1$ dynamical system $\Phi$ without any further assumptions (since $|e^{\mu_1}|>|e^{\mu_1}|^{1+\alpha}=\left(\rho\left(\D_{x_0}\Phi^1\right)\right)^{1+\alpha}$).
		Furthermore, such a unique eigenfunction always exists if $\Phi\in C^{1,\alpha}_{\textnormal{loc}}$ and if $w\D_{x_0}\Phi^t = e^{\mu t}w$ for all $t\in \Grp$, where $w$ is a left eigenvector of $\D_{x_0}\Phi^1$ with eigenvalue $e^{\mu}$.\footnote{By Example \ref{ex:intuition}, if $\Grp = \R$ and $\Phi$ is the flow of a $\CL^{1,\alpha}$ vector field $f$, these assumptions are automatically satisfied if $w$ is a left eigenvector of $\D_{x_0}f$ with eigenvalue $\mu$.}
		Since the complex conjugate $\bar{\psi}_1$ is a principal eigenfunction satisfying \eqref{eq:koopman-efunc} with exponent $\mu = \bar{\mu}_1$, it follows that the isostables as defined in \cite{mauroy2013isostables} are unique even if $\mu_1 \in \C \setminus \R$. 
		A uniqueness proof for analytic isostables under the additional assumptions of $(\D_{x_0}\Phi^1, \D_{x_0}\Phi^1)$ $\infty$-nonresonance and of dynamics generated by an analytic vector field was given in \cite[App.~A]{mauroy2013isostables}.
		For the special case that the eigenvalue of largest modulus is real, unique, and algebraically simple, in \cite[p.~23]{mauroy2013isostables} these authors do point out that uniqueness of $C^1$ isostables (if they exist) follows from the fact that they coincide with the unique $C^1$ \concept{global (strong) stable manifolds} \cite[pp.~4208,~4211]{kvalheim2018global} over\footnote{In general, the (strong) stable manifolds are the \concept{leaves} of the unique \concept{global (strong) stable foliation} \cite[p.~4208]{kvalheim2018global} of the \concept{global (center-)stable manifold} \cite[p.~4208]{kvalheim2018global} of an (inflowing) normally hyperbolic invariant manifold \cite{fenichel1971persistence,hirsch1977,eldering2013normally}; these leaves generalize the isochrons \cite{isochrons} of an attracting hyperbolic limit cycle.\label{foot:foliation}}  an attracting, \concept{normally hyperbolic} \cite[p.~4207]{kvalheim2018global}, $1$-dimensional, \concept{inflowing invariant manifold} \cite[p.~4211]{kvalheim2018global}; this argument works even if the dynamical system is only $C^1$ (see \cite{kvalheim2018global} for detailed information on the global stable foliation of an inflowing invariant manifold).
		The $1$-dimensional invariant ``slow'' manifold is itself generally non-unique without further assumptions, but this does not affect the isostable uniqueness argument.
		However, as pointed out in \cite[p.~23]{mauroy2013isostables}, this argument does not work when the eigenvalue of largest modulus is not real, because in this case the isostables can no longer be interpreted as strong stable manifolds (e.g., the relevant slow manifold is now $2$-dimensional, so the dimension of the codimension-1 isostables is too large by $1$).
		
		For the case that $\Grp = \R$ and $\Phi$ has an attracting hyperbolic periodic orbit, several authors have investigated various versions of \concept{isostable coordinates} without restricting attention to the ``slowest'' isostable coordinate.
		The authors in \cite[Eq.~5]{wilson2016isostable} defined a ``finite-time'' approximate version of \concept{isostable coordinates} which provide an approximation of our principal eigenfunctions.
		Subsequently, \cite[Sec.~2]{shirasaka2017phase} defined a version of ``exact'' isostable coordinates (termed \concept{amplitudes} and \concept{phases}) directly in terms of Koopman eigenfunctions, and in particular our Proposition \ref{prop:koopman-cka-per} and Theorem \ref{th:classify-per} can be used to directly infer existence, uniqueness and regularity properties of these coordinates under relatively weak assumptions.
		It appears that \cite{wilson2018greater,monga2019phase} intended to define a different version of ``exact'' isostable coordinates close in spirit to the approximate version in \cite{wilson2016isostable}.
		However, these definitions \cite[Eq.~24,~Eq.~58]{wilson2018greater, monga2019phase} are given in terms of a limit which might not exist for principal eigenfunctions other than the ``slowest'', as we show in Example \ref{ex:koop-converge} below.
		In any case, it appears that principal Koopman eigenfunctions provide a means for defining all of the isostable coordinates for a periodic orbit attractor which does not require such limits.
	\end{Rem}   	
   	
   	\begin{Rem}[Relationship to the principal eigenfunctions, principal algebras, and pullback algebras of \cite{mohr2016koopman}]\label{rem:mohr-mezic}
   	  	Given a nonlinear dynamical system $\Phi\colon Q\times \Grp \to Q$ with $Q$ the basin of an attracting hyperbolic fixed point $x_0$, Mohr and Mezi\'{c} defined (in our notation) the principal eigenfunctions for the associated linearization $\D_{x_0}\Phi\colon \T_{x_0}Q\times \Grp\to \T_{x_0}Q$ to be those of the form $v\mapsto w(v)$, where $w\colon \T_{x_0} Q\to \C$ is a left eigenvector of $\D_{x_0}\Phi^1$ \cite[Def.~2.2]{mohr2016koopman}, and they defined the principal algebra $\A_{\D_{x_0}\Phi^1}$ to be the subalgebra of $C^0(\T_{x_0}Q,\C)$ generated by the principal eigenfunctions \cite[Def.~2.3]{mohr2016koopman}.
   	  	Mohr and Mezi\'{c} do not define principal eigenfunctions or the principal algebra for the nonlinear system itself but, given a topological conjugacy $\tau\colon \T_{x_0}Q \to Q$ between $\Phi$ and $\D_{x_0}\Phi$, they define the \concept{pullback algebra}
   	  	\begin{equation}\label{eq:pullback-algebra}
   	  	\left(\A_{\D_{x_0}\Phi^1}\right) \circ \tau^{-1} \coloneqq \{\varphi \circ \tau^{-1}\colon \varphi \in \A_{\D_{x_0}\Phi^1}\}.
   	  	\end{equation}
   	  	Assuming that $\Phi\in \CLKA$, Proposition \ref{prop:koopman-cka-fix} implies that the relationship between the concepts in our Definition \ref{def:principal-eigenfunction} and those of \cite{mohr2016koopman} is as follows.
   	  	If $(\D_{x_0}\Phi^1,\D_{x_0}\Phi^1)$ is $k$-nonresonant and $\nu(\D_{x_0}\Phi^1,\D_{x_0}\Phi^1)< k + \alpha$ (see Definition \ref{def:nonres-spread}), our principal eigenfunctions for $\D_{x_0}\Phi^1$ coincide precisely with their principal eigenfunctions $w\colon \T_{x_0}Q\to\C$.
   	  	This implies that our principal algebra $\A^{k,\alpha}_{\D_{x_0}\Phi^1}$ coincides with their $\A_{\D_{x_0}\Phi^1}$.
   	  	Next, notice that the pullback algebra \eqref{eq:pullback-algebra} is generated by the functions $w \circ \tau^{-1}$ where $w\colon \T_{x_0}Q\to \C$ is a principal eigenfunction of the linearization. 
   	  	If we further assume that the conjugacy $\tau$ is a $\CLKA$ diffeomorphism, then the chain rule implies that each $w \circ \tau^{-1}$ is a $\CLKA$ principal eigenfunction for $\Phi$, and therefore $\left(\A_{\D_{x_0}\Phi^1}\right)\circ \tau^{-1} = \A^{k,\alpha}_{\Phi}$ by Proposition \ref{prop:koopman-cka-fix}.
   	  	In particular, under the above hypotheses it follows that $\left(\A_{\D_{x_0}\Phi^1}\right)\circ \tau^{-1}$ is independent of $\tau$ and generated by at most $n$ $\CLKA$ principal eigenfunctions for $\Phi$. 
   	  	This is perhaps surprising since \eqref{eq:pullback-algebra} depends on the a priori non-unique conjugacy $\tau$; here the assumption that $\tau$ is a $\CLKA$ diffeomorphism is essential. 
   	\end{Rem}
    
    For an attracting hyperbolic $\tau$-periodic orbit of a $\CLKA$ flow with image $\Gamma$ and basin $Q\supset \Gamma$, stable manifold theory \cite{fenichel1974asymptotic,fenichel1977asymptotic,hirsch1977,isochrons,ruelle1989elements} can be used to show that the global strong stable manifold (isochron) $\Ws_{x_0}$ through $x_0\in \Gamma$ is a $\CLKA$ submanifold of $Q$, and $\Phi^\tau(\Ws_{x_0}) = \Ws_{x_0}$.
    Furthermore, any eigenfunction $\varphi\in \CLKA(\Ws_{x_0},\C)$ of $\Phi^\tau|_{\Ws_{x_0}}$ satisfying $\varphi \circ \Phi^\tau|_{\Ws_{x_0}} = e^{\mu \tau}\varphi$ admits the unique extension to an eigenfunction $\psi\in \CLKA(Q,\C)$ with exponent $\mu$ given by $$\psi|_{\Ws_{\Phi^{t}(x_0)}}\coloneqq e^{\mu t} \varphi \circ \Phi^{-t}|_{\Ws_{\Phi^t(x_0)}}$$
    for all $t\in \R$.
    That $\psi\in \CLKA$ follows from considering locally-defined $\CLKA$ ``time-to-impact $\Ws_{x_0}$'' functions as in the proof of Proposition \ref{prop:floq-norm-form}.
    This observation combined with Propositions \ref{prop:p-eig-evec-cycle} and \ref{prop:koopman-cka-fix} yields the following result.
    (Alternatively, the statement concerning existence and uniqueness of principal eigenfunctions follows from Theorem \ref{th:main-thm-per} and Proposition \ref{prop:p-eig-evec-cycle}.)

   	\begin{Prop}[Existence and uniqueness of $\CLKA$ Koopman eigenvalues and principal eigenfunctions for a limit cycle attractor]\label{prop:koopman-cka-per}
 		Fix $k\in \N_{\geq 1}\cup \{+\infty\}$ and $0 \leq \alpha \leq 1$. 
 		Let $\Phi\colon Q \times \R \to Q$ be a $\CLKA$ flow with $Q$ the basin of an attracting hyperbolic $\tau$-periodic orbit with image $\Gamma \subset Q$, where $Q$ is a smooth manifold with $\dim(Q)\geq 2$.
        Fix $x_0\in \Gamma$ and let $E^s_{x_0}=\T_{x_0}\Ws_{x_0}$ denote the unique $\D_{x_0}\Phi^\tau$-invariant subspace complementary to $\T_{x_0} \Gamma$.   
        Assume that the spectral radius $\rho\left(\D_{x_0}\Phi^\tau|_{E^s_{x_0}}\right)\in (0,1)$ satisfies 
   		$$|e^{\mu\tau}|>\left(\rho\left(\D_{x_0}\Phi^\tau|_{E^s_{x_0}}\right)\right)^{k+\alpha}$$
   		in all of the following statements.	        

   		\emph{\textbf{Uniqueness of Koopman eigenvalues.}} 
   		Let $\psi_1\in \CLKA(Q,\C)$ be any Koopman eigenfunction satisfying \eqref{eq:koopman-efunc} with exponent $\mu \in \C$ and $\Grp = \R$.
   		Then there exists $m = (m_1,\ldots,m_n)\in \N_{\geq 0}^n$ such that
   		$$e^{\mu \tau} =  e^{(m\cdot \lambda)\tau},$$ where  $e^{\lambda_1\tau},\ldots,e^{\lambda_n\tau}$ are the eigenvalues of $\D_{x_0} \Phi^\tau|_{E^s_{x_0}}$ repeated with multiplicities and $\lambda \coloneqq (\lambda_1,\ldots, \lambda_n)$.
   		 
   	\emph{\textbf{Existence and uniqueness of principal eigenfunctions.}} Assume that $(e^{\mu\tau}, \D_{x_0}\Phi^\tau|_{E^s_{x_0}})$ is $k$-nonresonant.
   	Let $w\colon E^s_{x_0} \to \C$ be any left eigenvector of $\D_{x_0}\Phi^\tau|_{E^s_{x_0}}$ with eigenvalue $e^{\mu \tau}$.
   	Then there exists a unique principal eigenfunction $\psi\in \CLKA(Q,\C)$ for $\Phi$ satisfying \eqref{eq:koopman-efunc} with exponent $\mu$ and $\Grp = \R$ and satisfying $\D_{x_0}\psi|_{E^s_{x_0}}= w$. Additionally, if $\mu$ and $w$ are real, then $\psi\colon Q\to \R \subset \C$ is real.

   	\end{Prop}
    A well-known example of Sternberg shows that, even for an analytic diffeomorphism $\Phi^1$ of the plane having the globally attracting fixed point $0$, there need not exist a $C^2$ principal eigenfunction corresponding to $e^{\mu}\in \textnormal{spec}(\D_0\Phi^1)$ if $(e^{\mu},\D_0\Phi^1)$ is not $2$-nonresonant \cite[p.~812]{sternberg1957local}.
    Concentrating now on the issue of uniqueness of principal eigenfunctions, the following example shows that our nonresonance and spectral spread conditions are both necessary for the uniqueness statements of Propositions \ref{prop:koopman-cka-fix} and \ref{prop:koopman-cka-per} (hence also for the uniqueness statements of Theorems \ref{th:main-thm} and \ref{th:main-thm-per}). 
	\begin{Ex}[Uniqueness of principal eigenfunctions]\label{ex:thm-sharp}
		Consider $\Phi = (\Phi_1,\Phi_2)\colon \R^2\times \Grp \to \R^2$ defined by
		\begin{equation}\label{eq:example-theorem-sharp-maps}
		\begin{split}
		\Phi^t_1(x,y) &= e^{-t} x\\
		\Phi^t_2(x,y) &= e^{-(k+\alpha)t}y
		\end{split}
		\end{equation}
		where $k \in \N_{\geq 1}$, $0\leq \alpha \leq 1$, and either $\Grp = \Z$ or $\Grp = \R$.
		$\Phi$ is a diagonal linear dynamical system with $x_0=0$ a globally exponentially stable fixed point, and the eigenvalues of $\D_0 \Phi^1=\Phi^1$ are $e^{-1}$ and $e^{-(k+\alpha)}$.
		Furthermore, for any irrational $\alpha\in [0,1]$, $(e^{-(k+\alpha)},\D_0\Phi^1)$ is $\infty$-nonresonant; $\infty$-nonresonance also holds if, e.g., $k=1$ and $\alpha = 0$ (see Definition \ref{def:nonres}).
		However, if we define $\sigma_\alpha(x)\coloneqq |x|$ for $\alpha > 0$ and $\sigma_{\alpha}(x)\coloneqq x$ for $\alpha = 0$, then for any $k\in\N_{\geq 1}$ and $\alpha \in [0,1]$ both
		\begin{equation}\label{eq:ex-h1}
		h_1(x,y)\coloneqq y
		\end{equation}
		and 
		\begin{equation}\label{eq:ex-h2}
		h_2(x,y)\coloneqq y + \sigma_{\alpha}(x)^{k+\alpha}
		\end{equation}		
		are $C^{k,\alpha}$ principal eigenfunctions satisfying \eqref{eq:koopman-efunc} with the same exponent $$\mu = -(k+\alpha).$$
		In particular, this shows that $\CL^{k,\alpha}$ principal Koopman eigenfunctions are not necessarily unique (modulo scalar multiplication)  even if the  $\infty$-nonresonance condition is satisfied.
		Since here $h_2 \in \CLKA(\R^2,\C)$ and $r=(k+\alpha)$ is the smallest $r\in \R$ satisfying $e^{-(k+\alpha)}\geq\left(\rho\left(\D_{0}\Phi^1\right)\right)^{r}=(e^{-1})^r$, this shows that the spectral spread condition $|e^\mu|>\left(\rho\left(\D_{x_0}\Phi^1\right)\right)^{k+\alpha}$ is both necessary and sharp for the principal eigenfunction uniqueness statement of Proposition \ref{prop:koopman-cka-fix} to hold, at least in the case that $\alpha > 0$.\footnote{In the terminology and notation of Theorem \ref{th:main-thm} with $m=1$, the condition $|e^\mu|>\left(\rho\left(\D_{0}\Phi^1\right)\right)^{k+\alpha}$ is equivalent to the condition $\nu(e^{\mu},\D_0\Phi^1) < k + \alpha$, where the spectral spread $\nu(\slot,\slot)$ is defined in Definition \ref{def:nonres-spread}. See Remark \ref{rem:koopman-spectral-spread}.} 
		(Note that Proposition \ref{prop:koopman-cka-fix} \emph{does} imply that $\CL^{k',\alpha'}$ principal eigenfunctions are unique for any $k'+\alpha' > k+\alpha$.)
		If instead $k = 1$ and $\alpha = 0$, then $h_1$ and $h_2$ are both analytic eigenfunctions satisfying \eqref{eq:koopman-efunc} with the same exponent $\mu = -1,$ while $(e^{-1},\D_0\Phi^1)$ is $\infty$-nonresonant, but now these eigenfunctions are distinguished by their derivatives at the origin; this is consistent with the uniqueness statement of  Proposition \ref{prop:koopman-cka-fix}. 
		On the other hand, if $k = 2$ and $\alpha = 0$ so that $(e^{-2},\D_0\Phi^1)$ is not $2$-nonresonant, \eqref{eq:ex-h1} and \eqref{eq:ex-h2} show that analytic eigenfunctions are not unique despite the fact that the spectral spread condition $|e^{-2}|>\left(\rho\left(\D_{0}\Phi^1\right)\right)^{+\infty}=0$ certainly holds.
		Hence the nonresonance condition is also necessary for the principal eigenfunction uniqueness statement of Proposition \ref{prop:koopman-cka-fix} to hold.
		Finally, by taking $\Grp = \R$, changing the state space $\R^2$ above to $\R^2 \times S^1$, and prescribing the unit circle $S^1\subset \C$ with the decoupled dynamics $\Phi_3^t(x,y,\theta)\coloneqq e^{it}\theta$ (where $i=\sqrt{-1})$ yields an example showing that the spectral spread and nonresonance conditions are both necessary for the uniqueness statement in Proposition \ref{prop:koopman-cka-per} to hold as well.
		
		\begin{Ex}[Existence of the limit \eqref{eq:Koop-main-converge} and isostable coordinates]\label{ex:koop-converge}
		Existence of the limit in \eqref{eq:Koop-main-converge} is not automatic if the ``approximate eigenfunction'' $P$ is not an approximation to sufficiently high order. 
		To demonstrate this, fix $k \in \N_{\geq 1}$, $\alpha \in [0,1]$, $r\in \R_{\geq 1}$, and $\epsilon \in \R_{>0}$.
		Define $\sigma_\alpha(x)\coloneqq |x|$ for $\alpha > 0$ and $\sigma_{\alpha}(x)\coloneqq x$ for $\alpha = 0$, and consider the $\CLKA$ dynamical system $\Phi\colon \R^2\times \Grp \to \R^2$ defined by
		\begin{equation}
		\begin{split}
		\Phi_1^t(x,y)&= e^{-t}x\\
		\Phi_2^t(x,y)&= e^{-rt}(y-\epsilon \sigma_{\alpha}(x)^{k + \alpha}) + \epsilon e^{-(k + \alpha)t} \sigma_{\alpha}(x)^{k+\alpha},
		\end{split}
		\end{equation}
		where either $\Grp = \Z$ or $\Grp = \R$.
		To see that $\Phi$ is indeed a dynamical system (i.e., that $\Phi$ satisfies the group property $\Phi^{t+s}=\Phi^t\circ \Phi^s$), define the diagonal linear system $\widetilde{\Phi}^t(x,y) = (e^{-t} x, e^{-rt}y)$ and the $C^{k,\alpha}$ diffeomorphism $H\colon \R^2\to \R^2$ via $H(x,y) \coloneqq (x, y + \epsilon \sigma_{\alpha}(x)^{k + \alpha})$, and note that $\Phi^t = H \circ \widetilde{\Phi}^t \circ H^{-1}$.
		In other words, $\Phi$ is obtained from a diagonal linear dynamical system via a $C^{k,\alpha}$ change of coordinates; note also that this change of coordinates can be made arbitrarily close to the identity by taking $\epsilon$ arbitrarily small.
		Since $x_0=0$ is a globally exponentially stable fixed point for $\widetilde{\Phi}$, it is also so for $\Phi$.
		We note that $r_0=r\geq 1$ is the smallest $r_0\in \R$ such that the spectral radius $\rho\left(\D_{0}\Phi^1\right)=e^{-1}\in (0,1)$ satisfies $|e^{-r}|\geq\left(\rho\left(\D_{0}\Phi^1\right)\right)^{r_0}$.\footnote{That is, the spectral spread satisfies $\nu(e^{-r},\D_0 \Phi^1) = r$ in the terminology of Definition \ref{def:nonres-spread} and Theorem \ref{th:main-thm}. See Remark \ref{rem:koopman-spectral-spread}.}
		We further note that, for any choice of $\epsilon$, the analytic function $P(x,y)\coloneqq y$ satisfies $$P \circ \Phi^1 = e^{-r} P + R$$
		where $\D^j_{(0,0)}R = 0$ for all integers $0\leq j < k+\alpha$.
		However, 
		\begin{equation}
		\label{eq:ex-diverge}
		\begin{split}
		\lim_{t\to\infty}e^{rt} P \circ \Phi^t(x,y) &= y- \epsilon \sigma_{\alpha}(x)^{k + \alpha}  + \epsilon \sigma_{\alpha}(x)^{k + \alpha} \lim_{t\to\infty} e^{(r-(k + \alpha))t}\\
		&= 
		\begin{cases} 
		y-\epsilon \sigma_{\alpha}(x)^{k + \alpha} & 1\leq  r < k + \alpha \\
		y & r = k + \alpha\\
		+\infty & r > k + \alpha
		\end{cases}
		\end{split}
		\end{equation}
		for any $x \neq 0$ and $\epsilon > 0$.
		We see that the limit \eqref{eq:ex-diverge} diverges when $r > k+\alpha$ (so that $|e^{-r}|<\left(\rho\left(\D_{0}\Phi^1\right)\right)^{k+\alpha}$), but the limit converges when $r\leq k + \alpha$ (so that $|e^{-r}|\geq\left(\rho\left(\D_{0}\Phi^1\right)\right)^{k+\alpha}$). 
		For the case that $r<k+\alpha$ and $r\not \in \N_{\geq 2}$, this is consistent with Proposition \ref{prop:koopman-cka-fix} which guarantees that the limit converges if $\Phi\in \CLKA$, if $|e^{-r}|>\left(\rho\left(\D_{0}\Phi^1\right)\right)^{k+\alpha}$, and if  $(e^{-r},\D_0 \Phi^1)$ is $k$-nonresonant.
		When $r = k + \alpha$ and $r\not \in \N_{\geq 2}$, convergence is also guaranteed by Proposition \ref{prop:koopman-cka-fix} for this specific example, because then (i) $(e^{-r},\D_0 \Phi^1)$ is  $\infty$-nonresonant, (ii) $\Phi$ is linear (the nonlinear terms cancel when $r=k+\alpha$) and hence $C^\infty$, and (iii)  Proposition \ref{prop:koopman-cka-fix} guarantees that this limit always exists if $\Phi\in C^\infty$ and  $(e^{-r},\D_0 \Phi^1)$ is  $\infty$-nonresonant because the spectral spread condition $|e^{-r}|>\left(\rho\left(\D_{0}\Phi^1\right)\right)^{+\infty}=0$ always holds.
		As alluded to in Remark \ref{rem:no-nonres-needed}, the preceding reasoning can actually be applied even without the assumption that $r$ is not an integer if Lemma \ref{lem-make-approx-exact} is used instead of Proposition \ref{prop:koopman-cka-fix} as the tool of inference (i.e., nothing about nonresonance actually needs to be assumed for this example).
		We emphasize that the divergence in \eqref{eq:ex-diverge} is associated purely with the spectral spread condition since, e.g., we can choose $r \geq 1$ so that $(e^{-r},\D_0 \Phi^1)$ is $\infty$-nonresonant and take $\alpha = 0$ so that $\Phi$ is analytic.		
		
		Note that by taking $\Grp = \R$, changing the state space $\R^2$ to $\R^2 \times S^1$, and prescribing the unit circle $S^1\subset \C$ with the decoupled dynamics $\Phi_3^t(x,y,\theta)\coloneqq e^{it}\theta$ (where $i=\sqrt{-1})$ yields a corresponding example with a globally attracting hyperbolic periodic orbit $\{(0,0)\}\times S^1$.
		In this case, for this example \cite[Eq.~24,~Eq.~58]{wilson2018greater, monga2019phase}
		would attempt to \emph{define} the ``faster'' isostable coordinate (principal eigenfunction in our terminology) $\psi_2$ satisfying \eqref{eq:koopman-efunc} with exponent $\mu_2 \coloneqq -r$ via the limit \eqref{eq:ex-diverge}, but \eqref{eq:ex-diverge} shows that this limit does not exist if $r > k + \alpha$.
        This phenomenon should be compared with the explanation in the preceding paragraph based on our general results. 
		\end{Ex}

	\end{Ex}

	\subsection{Classification of all $C^\infty$ Koopman eigenfunctions}\label{sec:app-classify}
    
    \begin{Not}
    To improve the readability of Theorems \ref{th:classify-point} and \ref{th:classify-per} below, we introduce the following multi-index notation.    	
    We define an $n$-dimensional multi-index to be an $n$-tuple $i = (i_1,\ldots, i_n)\in \N^n_{\geq 0}$ of nonnegative integers, and define its sum to be $|i|\coloneqq i_1+\cdots  + i_n.$ 
    For a multi-index $i\in \N^n_{\geq 0}$ and $z = (z_1,\ldots, z_n)\in \C^n$, we define $z^{[i]}\coloneqq z_1^{i_1}\cdots z_n^{i_n}.$
    Given a $\C^n$-valued function $\psi = (\psi_1,\ldots, \psi_n)\colon Q\to \C^n$, we define $\psi^{[i]}\colon Q\to \C$ via $\psi^{[i]}(x)\coloneqq (\psi(x))^{[i]}$ for all $x\in Q$.
    We also define the complex conjugate of $\psi = (\psi_1,\ldots, \psi_n)$ element-wise: $\bar{\psi}\coloneqq (\bar{\psi}_1,\ldots, \bar{\psi}_n).$
    \end{Not}
	
   	For the case $\Grp = \R$, the $\infty$-nonresonance assumption in the following result can be replaced by the slightly less restrictive nonresonance condition in Remark \ref{rem:weaker-nonresonance-real-time}.
   	Applications-oriented readers may find it helpful to consult Example \ref{ex:intuition} with $m=1$ for relevant remarks.

    \begin{Th}[Classification of all $C^\infty$ eigenfunctions for a point attractor]\label{th:classify-point}
   		Let $\Phi\colon Q \times \Grp \to Q$ be a $C^{\infty}$ dynamical system with $Q$ the basin of an attracting hyperbolic fixed point $x_0\in Q$,  where $Q$ is a smooth manifold with $\dim(Q)\geq 1$ and either $\Grp = \Z$ or $\Grp = \R$.    	
    	Assume that $\D_{x_0}\Phi^1$ is semisimple (diagonalizable over $\C$) and that $(\D_{x_0}\Phi^1,\D_{x_0}\Phi^1)$ is $\infty$-nonresonant.
    	
    	Letting $n = \dim(Q)$, it follows that there exists an $n$-tuple $$\psi = (\psi_1,\ldots,\psi_n)$$ of $C^\infty$ principal eigenfunctions such that every $C^\infty$ Koopman eigenfunction $\varphi$ is a finite sum of scalar multiples of products of the $\psi_j$ and their complex conjugates $\bar{\psi}_j$:
    	\begin{equation}\label{eq:th-classify-expansion}
    	\varphi = \sum_{|\ell|+|m| \leq k}c_{\ell,m}\psi^{[\ell]}\bar{\psi}^{[m]}
    	\end{equation}
    	for some $k \in \N_{\geq 0}$ and some coefficients $c_{\ell,m}\in \C$. 
    \end{Th}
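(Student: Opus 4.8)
The plan is to reduce to the linear case via Sternberg linearization (Proposition~\ref{prop:sternberg}) and then classify the $C^\infty$ Koopman eigenfunctions of a semisimple linear contraction by a Taylor-expansion argument. Since $\Phi\in C^\infty$, since $\rho(\D_{x_0}\Phi^1)<1$ (as $x_0$ is a globally attracting hyperbolic fixed point), and since $(\D_{x_0}\Phi^1,\D_{x_0}\Phi^1)$ is $\infty$-nonresonant while $\nu(\D_{x_0}\Phi^1,\D_{x_0}\Phi^1)<\infty$ holds automatically, Proposition~\ref{prop:sternberg} with $k=\infty$, $\alpha=0$ furnishes a $C^\infty$ diffeomorphism $h\colon Q\to V:=\T_{x_0}Q$ with $\D_{x_0}h=\id$ and $h\circ\Phi^t=\D_{x_0}\Phi^t\circ h$ for all $t\in\Grp$. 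Write $L:=\D_{x_0}\Phi^1$; when $\Grp=\R$ also write $\D_{x_0}\Phi^t=e^{tA_0}$ for the generator $A_0$, which is semisimple because $L=e^{A_0}$ is (in the multiplicative Jordan decomposition, $e^{A_0}$ semisimple forces its unipotent part $e^{N}$ to equal $\id$, hence the nilpotent part $N$ of $A_0$ to vanish). Because $h$ conjugates $\Phi$ to its linearization and preserves $C^\infty$ regularity, $\varphi\mapsto f:=\varphi\circ h^{-1}$ is an eigenvalue-preserving bijection from $C^\infty$ Koopman eigenfunctions of $\Phi$ onto those of the linear system $L$ on $V$; and since any $C^\infty$ flow eigenfunction is in particular an eigenfunction of the time-one map, it suffices to classify the $C^\infty$ functions $f\colon V\to\C$ with $f\circ L=c_0 f$, where necessarily $c_0\neq 0$ and (using $L^ky\to 0$ and continuity of $f$) $|c_0|\le 1$.

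Using semisimplicity, choose eigenvectors of $L$ spanning $V_\C$ and let $w_1,\dots,w_n\in V_\C^*$ be the dual basis, so $w_j\circ L=\lambda_j w_j$ with $\lambda_1,\dots,\lambda_n$ the eigenvalues of $L$ repeated with multiplicities and each restriction $w_j\colon V\to\C$ nonzero; when $\Grp=\R$ the $w_j$ may additionally be taken to be eigenvectors of the adjoint of $A_0$. Put $\psi_j:=w_j\circ h$; then $\psi_j(x_0)=0$, $\D_{x_0}\psi_j=w_j\neq 0$, and $\psi_j\circ\Phi^t=e^{\mu_j t}\psi_j$ for all $t\in\Grp$ and a suitable $\mu_j$ with $e^{\mu_j}=\lambda_j$, so $\psi=(\psi_1,\dots,\psi_n)$ is an $n$-tuple of $C^\infty$ principal eigenfunctions and each monomial $\psi^{[i]}$ is a $C^\infty$ Koopman eigenfunction. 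It therefore remains to show that every $C^\infty$ solution of $f\circ L=c_0 f$ is a finite $\C$-linear combination of monomials $w^{[i]}$: pulling back through $h$ then yields $\varphi=\sum_i c_i\psi^{[i]}$, an expression of the form \eqref{eq:th-classify-expansion} with all conjugate exponents zero (conjugate factors $\bar\psi^{[\ell]}$ are permitted by the statement but not needed).

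To prove this, let $p_N$ be the degree-$N$ Taylor polynomial of $f$ at $0\in V$. Because $L$ is linear and fixes $0$, the degree-$N$ Taylor polynomial of $f\circ L=c_0 f$ is $p_N\circ L$, so uniqueness of Taylor polynomials gives $p_N\circ L=c_0 p_N$, and matching homogeneous components gives $p^{(d)}\circ L=c_0 p^{(d)}$ for each homogeneous part $p^{(d)}$ of $p_N$. In the space of homogeneous degree-$d$ $\C$-valued polynomials on $V$, the monomials $\{w^{[i]}:|i|=d\}$ form a basis (here semisimplicity is essential, so that $\{w_j\}$ is a basis of $V_\C^*$), and $w^{[i]}\circ L=\lambda^{[i]}w^{[i]}$ with $\lambda:=(\lambda_1,\dots,\lambda_n)$; hence $p^{(d)}$ is a combination of those $w^{[i]}$ with $|i|=d$ and $\lambda^{[i]}=c_0$. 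Since $|\lambda^{[i]}|\le\rho(L)^{|i|}$ and $\rho(L)<1$, the constraint $\lambda^{[i]}=c_0$ forces $|i|\le N_0$ for a finite $N_0$ depending only on $c_0$ and $\rho(L)$, so $p^{(d)}=0$ for $d>N_0$ and $p_N=p_{N_0}=:p$ for all $N\ge N_0$.

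Finally the remainder $g:=f-p$ vanishes identically: it is $C^\infty$, flat at $0$, and still satisfies $g\circ L=c_0 g$, hence $g(y)=c_0^{-k}g(L^k y)$ for all $k$. Flatness gives, for every $m$, a constant $C_m$ with $|g(z)|\le C_m|z|^m$ for $|z|\le 1$; fixing $\beta'\in(\rho(L),1)$ (so $|L^k y|=O((\beta')^k)$) and then $m$ with $(\beta')^m<|c_0|$ gives $|g(y)|=|c_0|^{-k}|g(L^k y)|\to 0$. Thus $g\equiv 0$ and $f=p=\sum_{|i|\le N_0,\ \lambda^{[i]}=c_0}c_i w^{[i]}$, which completes the argument as described above. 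I expect the step requiring the most care to be the reduction to the linear case — verifying that Proposition~\ref{prop:sternberg} applies under the stated hypotheses, that its linearizing diffeomorphism transports the $C^\infty$ eigenfunction problem faithfully, and that $A_0$ inherits semisimplicity — together with the correct use of semisimplicity in the eigenspace decomposition of the homogeneous polynomials; the Taylor and flatness estimates themselves are routine.
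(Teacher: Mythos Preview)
Your proof is correct and follows essentially the same route as the paper: reduce to the linear case via Proposition~\ref{prop:sternberg}, Taylor-expand, match eigenvalues degree by degree, and kill the flat remainder. The paper invokes Proposition~\ref{prop:uniqueness-without-nonresonance} for that last step where you give the direct flatness estimate, and the paper writes the expansion with both $z^{[i]}$ and $\bar z^{[\ell]}$ factors whereas you observe that the eigenfunctionals $w_j$ already form a basis of $V_\C^*$ so conjugate factors are redundant---a slightly sharper conclusion than the form stated in \eqref{eq:th-classify-expansion}.
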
	    
    \begin{proof}
    	By Proposition \ref{prop:sternberg} and linear algebra, there exists a $C^\infty$ embedding $Q\hookrightarrow \C^n$ which maps $Q$ diffeomorphically onto an $\R$-linear subspace of $\C^n$, maps $x_0$ to $0$, and semiconjugates $\Phi$ to the diagonal $\C$-linear dynamical system $\Theta^t(z_1,\ldots,z_n)=(e^{\lambda_1 t} z_1,\ldots, e^{\lambda_n t} z_n)$.\footnote{The standard definition of $C^\infty$ embedding is recalled preceding Proposition \ref{prop:floq-norm-form} (take $\alpha = 0$ and $k = +\infty$).}
    	Thus, for simplicity, we may (and do) view $Q$ as a $\Theta$-invariant $\R$-linear subspace of $\C^n$ with $\Phi = \Theta|_{Q\times \Grp}$.    	
    	Let $\varphi \in C^\infty(Q,\C)$ be any $C^\infty$ Koopman eigenfunction satisfying \eqref{eq:koopman-efunc} with exponent $\mu\in \C$.  	    

        Write $z = (z_1,\ldots,z_n)\in \C^n$.
    	For any $k\in \N_{\geq 0}$, Taylor's theorem implies the existence of $R_k\in C^\infty(Q,\C)$ satisfying $0=R_k(0) = \D_0 R_k = \cdots = \D_0^k R_k$ and coefficients $c_{\ell,m}$ such that
    	\begin{equation}\label{eq:classify-point-expansion}
    	\begin{split}
    	\varphi(z) &= \sum_{|\ell|+|m|\leq k} c_{\ell,m}z^{[\ell]}\bar{z}^{[m]}+ R_k(z)
    	\end{split}
    	\end{equation}
    	for all $z\in Q$.
    	Defining $\lambda\coloneqq (\lambda_1,\ldots, \lambda_n)$ and writing the eigenfunction equation $\varphi \circ \Theta^1 = e^{\mu} \varphi$ in terms of the expansion \eqref{eq:classify-point-expansion} yields, for all $z\in Q$, 
    	\begin{align*}
    	 \sum_{|\ell| + |m| \leq k} e^{\ell\cdot \lambda + m\cdot \bar{\lambda}}c_{\ell,m}z^{[\ell]}\bar{z}^{[m]} + R_k\circ \Phi^1(z) 
    	 = \sum_{|\ell| + |m| \leq k} e^{\mu} \left(c_{\ell,m}z^{[\ell]}\bar{z}^{[m]} + R_k(z)\right).
    	\end{align*}
    	Upon subtracting terms, we see that $S_k\coloneqq R_k\circ \Phi^1 - e^{\mu} R_k$ is the restriction of a $k$-th order polynomial to $Q$.
    	On the other hand, we infer that $0=S_k(0) = \D_0 S_k = \cdots = \D_0^k S_k$ from the corresponding property of $R_k$.
    	Thus, $S_k$ is the zero function, so $R_k\circ \Phi^1 = e^{\mu}R_k$.
    	Recall Definition \ref{def:nonres-spread} of the (always finite) spectral spread $\nu(\slot,\slot)$.
    	If we choose $k$ sufficiently large so that $k > \nu(e^{\mu},\D_0 \Phi^1)$, Proposition \ref{prop:uniqueness-without-nonresonance} implies that $R_k\equiv 0$.
    	From \eqref{eq:classify-point-expansion}, we then see that $\varphi$ is equal to a sum of products of the principal eigenfunctions $\psi_j(z)\coloneqq z_j$, $\bar{\psi}_j(z) = \bar{z}_j$ as desired.    	
    \end{proof}	
    
    For a an attracting hyperbolic $\tau$-periodic orbit of a $\CLKA$ flow with image $\Gamma$ and basin $Q\supset \Gamma$, let $\Ws_{x_0}$ be the global strong stable manifold (isochron) through the point $x_0\in \Gamma$.
    As discussed in the proof of Proposition \ref{prop:floq-norm-form}, there is a unique (modulo scalar multiplication) continuous eigenfunction satisfying \eqref{eq:koopman-efunc} with exponent $\mu = i\frac{2\pi}{\tau}$ and $\Grp = \R$, where $i=\sqrt{-1}$, and this eigenfunction is in fact $C^\infty$ for a $C^\infty$ flow.  
    Moreover, such an eigenfunction is constant on $\Ws_{x_0}$.
    In the theorem below, let $\psi_\theta$ be the unique such  eigenfunction satisfying $\psi_\theta|_{\Ws_{x_0}}\equiv 1$, where the point $x_0$ is as in the theorem statement.
    Explicitly, $\psi_\theta$ is given by
    $$\psi_\theta|_{\Ws_{\Phi^t(x_0)}} = e^{i\frac{2\pi}{\tau} t}$$
    for all $t\in \R$.
    This defines $\psi_\theta$ on all of $Q$ since $Q = \bigcup_{t\in \R}\Ws_{\Phi^t(x_0)}$, and the definition makes sense since $\Ws_{\Phi^{j\tau}(x_0)} = \Ws_{x_0}$ for all $j\in \Z$.
    \begin{Th}[Classification of all $C^\infty$ eigenfunctions for a limit cycle attractor]\label{th:classify-per}
    	Let $\Phi\colon Q \times \R \to Q$ be a $C^{\infty}$ dynamical system with $Q$ the basin of an attracting hyperbolic $\tau$-periodic orbit with image $\Gamma\subset  Q$, where $Q$ is a smooth manifold with $\dim(Q)\geq 2$.    	
    	Fix $x_0 \in \Gamma$ and denote by $E^s_{x_0}$ the unique $\tau$-invariant subspace complementary to $\T_{x_0}\Gamma$.
    	Assume that $\D_{x_0}\Phi^\tau|_{E^s_{x_0}}$ is semisimple and that $(\D_{x_0}\Phi^\tau|_{E^s_{x_0}},\D_{x_0}\Phi^\tau|_{E^s_{x_0}})$ is $\infty$-nonresonant.
    	
    	Letting $n + 1= \dim(Q)$, it follows that there exists an $n$-tuple $$\psi = (\psi_1,\ldots,\psi_n)$$ of $C^\infty$ principal eigenfunctions such that every $C^\infty$ Koopman eigenfunction $\varphi$ is a finite sum of scalar multiples of products of integer powers of $\psi_\theta$ with products of the $\psi_j$ and their complex conjugates $\bar{\psi}_j$:
    	\begin{equation}
    	\varphi = \sum_{|\ell|+|m|\leq k}c_{\ell,m}\psi^{[\ell]}\bar{\psi}^{[m]} \psi_\theta^{j_{\ell,m}}
    	\end{equation}
    	for some $k\in \N_{\geq 0}$, some coefficients $c_{\ell,m} \in \C$, and $j_{\ell,m} \in \Z$.
    \end{Th}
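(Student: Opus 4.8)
The plan is to reduce the statement to the point--attractor classification (Theorem~\ref{th:classify-point}) applied on a single isochron, and then to propagate the conclusion to all of $Q$ using the isochron foliation. First, since $\D_{x_0}\Phi^\tau|_{E^s_{x_0}}$ is semisimple and $(\D_{x_0}\Phi^\tau|_{E^s_{x_0}},\D_{x_0}\Phi^\tau|_{E^s_{x_0}})$ is $\infty$-nonresonant, I would invoke Proposition~\ref{prop:floq-norm-form} with $k=\infty$ (the spectral spread hypothesis $\nu(\slot,\slot)<\infty$ holds automatically). Writing $\D_{x_0}\Phi^\tau|_{E^s_{x_0}}=e^{\tau A}$ and using semisimplicity to choose an eigenbasis identifying $(E^s_{x_0})_\C$ with $\C^n$ and $A$ with $\textnormal{diag}(\lambda_1,\dots,\lambda_n)$, this yields a proper $C^\infty$ embedding $\psi=(\psi_\theta,\psi_1,\dots,\psi_n)\colon Q\to S^1\times\C^n$ with
\[
\psi_\theta\circ\Phi^t=e^{2\pi i t/\tau}\psi_\theta,\qquad \psi_j\circ\Phi^t=e^{\lambda_j t}\psi_j\quad(1\leq j\leq n)
\]
for all $t\in\R$, with $\psi_\theta|_{\Ws_{x_0}}\equiv 1$, and with $\psi_z\coloneqq(\psi_1,\dots,\psi_n)$ restricting to a $C^\infty$ diffeomorphism $\psi_z|_{\Ws_{x_0}}\colon\Ws_{x_0}\to E^s_{x_0}$. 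Each $\psi_j$ is a $C^\infty$ principal eigenfunction of $\Phi$: it vanishes on $\Gamma$ because $\rho(e^{\tau A})<1$ forces $\psi_z$ to vanish on the $\Phi^\tau$-fixed set $\Gamma$, and $\D_x\psi_j\neq 0$ for $x\in\Gamma$ because each coordinate functional is nonzero on $E^s_{x_0}$ (so $\D_{x_0}\psi_j\neq 0$ by the derivative normalization in Proposition~\ref{prop:floq-norm-form}), and $\psi_j\circ\Phi^t=e^{\lambda_j t}\psi_j$ then propagates this nonvanishing around $\Gamma$.

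\emph{Classification on an isochron.} Let $\varphi\in C^\infty(Q,\C)$ be an arbitrary Koopman eigenfunction, $\varphi\circ\Phi^t=e^{\mu t}\varphi$ for all $t\in\R$. Its restriction $\varphi|_{\Ws_{x_0}}$ is a $C^\infty$ eigenfunction of the $\Grp=\Z$ dynamical system generated by the $C^\infty$ diffeomorphism $\Phi^\tau|_{\Ws_{x_0}}$, which has the globally attracting hyperbolic fixed point $x_0$, with eigenvalue $e^{\mu\tau}$. By uniqueness in Proposition~\ref{prop:sternberg}, $\psi_z|_{\Ws_{x_0}}$ is the Sternberg linearization of $\Phi^\tau|_{\Ws_{x_0}}$, so the Taylor--expansion argument in the proof of Theorem~\ref{th:classify-point} applies verbatim on $\Ws_{x_0}$ with the linearizing coordinates $(\psi_1|_{\Ws_{x_0}},\dots,\psi_n|_{\Ws_{x_0}})$; killing the Taylor remainder via Proposition~\ref{prop:uniqueness-without-nonresonance} (or, in the borderline case $|e^{\mu\tau}|=1$, via the elementary fact that a continuous eigenfunction on $\Ws_{x_0}$ with $|e^{\mu\tau}|=1$ vanishing at $x_0$ is identically zero), one obtains a finite expansion
\[
\varphi|_{\Ws_{x_0}}=\sum_{|\ell|+|m|\leq k}c_{\ell,m}\,\bigl(\psi^{[\ell]}\bar\psi^{[m]}\bigr)\big|_{\Ws_{x_0}}
\]
in which every surviving monomial $\psi^{[\ell]}\bar\psi^{[m]}$ is itself an eigenfunction of $\Phi^\tau|_{\Ws_{x_0}}$ with eigenvalue $e^{\mu\tau}$; that is, $e^{\tau(\ell\cdot\lambda+m\cdot\bar\lambda)}=e^{\mu\tau}$, equivalently $\mu-\ell\cdot\lambda-m\cdot\bar\lambda\in\tfrac{2\pi i}{\tau}\Z$.

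\emph{Propagation to $Q$.} For each surviving pair $(\ell,m)$ pick $j_{\ell,m}\in\Z$ with $\mu=\ell\cdot\lambda+m\cdot\bar\lambda+\tfrac{2\pi i}{\tau}j_{\ell,m}$, and set
\[
\tilde\varphi\coloneqq\sum_{|\ell|+|m|\leq k}c_{\ell,m}\,\psi^{[\ell]}\bar\psi^{[m]}\psi_\theta^{\,j_{\ell,m}}.
\]
Using $\psi^{[\ell]}\circ\Phi^t=e^{(\ell\cdot\lambda)t}\psi^{[\ell]}$, $\bar\psi^{[m]}\circ\Phi^t=e^{(m\cdot\bar\lambda)t}\bar\psi^{[m]}$, and $\psi_\theta^{\,j}\circ\Phi^t=e^{2\pi i j t/\tau}\psi_\theta^{\,j}$, each term transforms under $\Phi^t$ by the scalar $e^{(\ell\cdot\lambda+m\cdot\bar\lambda+2\pi i j_{\ell,m}/\tau)t}=e^{\mu t}$, so $\tilde\varphi$ is a $C^\infty$ Koopman eigenfunction of $\Phi$ of exactly the asserted form. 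Since $\psi_\theta|_{\Ws_{x_0}}\equiv 1$, the last two displays give $\tilde\varphi|_{\Ws_{x_0}}=\varphi|_{\Ws_{x_0}}$, so $\varphi-\tilde\varphi$ is either zero or a $C^\infty$ eigenfunction with eigenvalue $e^\mu$ that vanishes on $\Ws_{x_0}$. Because the isochrons $\Phi^t(\Ws_{x_0})=\Ws_{\Phi^t(x_0)}$, $t\in\R$, cover $Q$, any $q\in Q$ has the form $\Phi^t(y)$ with $y\in\Ws_{x_0}$, whence $(\varphi-\tilde\varphi)(q)=e^{\mu t}(\varphi-\tilde\varphi)(y)=0$. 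Hence $\varphi=\tilde\varphi$ on $Q$, which proves the theorem.

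\emph{Main obstacle.} The delicate points are organizational rather than analytic: (i) verifying that $(\psi_1|_{\Ws_{x_0}},\dots,\psi_n|_{\Ws_{x_0}})$ genuinely is a linearizing coordinate system on the isochron, so that the proof of Theorem~\ref{th:classify-point} transfers without change (this rests on the normalization of $\D_{x_0}\psi_z$ together with the diffeomorphism claim, both furnished by Proposition~\ref{prop:floq-norm-form}, plus uniqueness of the Sternberg linearization in Proposition~\ref{prop:sternberg}), and (ii) the $\tfrac{2\pi i}{\tau}\Z$ bookkeeping that selects the exponents $j_{\ell,m}$ and shows the powers of $\psi_\theta$ exactly restore the correct eigenvalue of each monomial upon passing from $\Ws_{x_0}$ back to $Q$.
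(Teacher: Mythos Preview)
Your proposal is correct and follows essentially the same strategy as the paper: restrict $\varphi$ to the isochron $\Ws_{x_0}$, invoke the point--attractor classification (Theorem~\ref{th:classify-point}) there, record the relation $\mu=\ell\cdot\lambda+m\cdot\bar\lambda+\tfrac{2\pi i}{\tau}j_{\ell,m}$, and then propagate to all of $Q$ via the isochron foliation. The only organizational differences are that you package the construction of the $\psi_j$ through Proposition~\ref{prop:floq-norm-form} rather than through the extension formula \eqref{eq:th-classify-per-extension-formula}, and your propagation step builds $\tilde\varphi$ first and shows $\varphi-\tilde\varphi$ vanishes, whereas the paper computes $\varphi|_{\Ws_{\Phi^{-t}(x_0)}}$ directly; these are equivalent. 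Your explicit handling of the borderline case $|e^{\mu\tau}|=1$ is a small refinement over the paper, which simply cites Theorem~\ref{th:classify-point} without isolating that case.
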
	        
    \begin{proof}
    	Let $\Ws_{x_0}$ be the $C^\infty$ global strong stable manifold through $x_0$.
    	We remind the reader of the facts $Q = \bigcup_{t\in \R}\Ws_{\Phi^t(x_0)}$ and $\Ws_{\Phi^t(x_0)} = \Phi^t(\Ws_{x_0})$ which are implicitly used in the remainder of the proof.
    	
    	First, we note that every eigenfunction $\chi\in C^\infty(\Ws_{x_0},\C)$ of $F^j(x)\coloneqq \Phi^{j\tau}|_{\Ws_{x_0}}(x)$ satisfying \eqref{eq:koopman-efunc} with exponent $\mu\in \C$ and $\Grp = \Z$ admits a unique extension to an eigenfunction $\tilde{\chi}\in C^\infty(Q,\C)$ of $\Phi$ satisfying \eqref{eq:koopman-efunc} with exponent $\mu$ and $\Grp = \R$; this unique extension $\tilde{\chi}$ is defined via 
    	\begin{align}\label{eq:th-classify-per-extension-formula}
        \tilde{\chi}|_{\Ws_{\Phi^{-t}(x_0)}} = e^{-\mu t} \chi \circ \Phi^t|_{\Ws_{\Phi^{-t}(x_0)}}   \end{align}
        for all $t\in \R$.
        That $\tilde{\chi}\in C^\infty$ follows from considering locally-defined $C^\infty$ ``time-to-impact $\Ws_{x_0}$'' functions as in the proof of Proposition \ref{prop:floq-norm-form}, and $\chi$ is a principal eigenfunction if and only if its extension $\tilde{\chi}$ is.
    	
    	Next, let $\varphi\in C^\infty(Q,\C)$ be a eigenfunction satisfying \eqref{eq:koopman-efunc} with exponent $\mu$ and $\Grp = \R$.
    	Theorem \ref{th:classify-point} implies that $\varphi|_{\Ws_{x_0}}$ is equal to a sum of products of principal eigenfunctions $\chi_1,\ldots,\chi_n,\bar{\chi}_1,\ldots, \bar{\chi}_n$ of $\Phi^\tau|_{\Ws_{x_0}}$ of the form:
    	\begin{equation}\label{eq:classify-per-expand-1}
    	\varphi|_{\Ws_{x_0}} = \sum_{|\ell|+|m| \leq k}c_{\ell,m}\chi^{[\ell]}\bar{\chi}^{[m]} 
    	\end{equation}   
    	for some $k\in \N_{\geq 0}$, where $\chi = (\chi_1,\ldots,\chi_n)$. 	
    	Let $\lambda = (\lambda_1, \ldots \lambda_n) \in \C^n$ be such that each $\chi_j$ satisfies $\chi_j \circ \Phi^\tau|_{\Ws_{x_0}} = e^{\lambda_j \tau}\chi_j$. 	
    	Since $\varphi$ satisfies \eqref{eq:koopman-efunc} with exponent $\mu$, it follows that 
    	\begin{equation}\label{eq:th-classify-per-exp-equal}
    	e^{\mu \tau} = e^{(\ell\cdot \lambda + m\cdot \bar{\lambda})\tau}
    	\end{equation}
    	for all $\ell,m\in \N^{n}_{\geq 0}$ such that $c_{\ell,m} \neq 0$, so for such $\ell,m$ we have 
    	\begin{equation}\label{eq:th-classify-per-log-equal}
    	\mu = \ell\cdot \lambda + m\cdot \bar{\lambda}+ i \frac{2\pi}{\tau} j_{\ell,m}
    	\end{equation}  
    	for some $j_{\ell,m} \in \Z$, where $i=\sqrt{-1}$.  	
    	By the previous paragraph, we may uniquely write $\chi = \psi|_{\Ws_{x_0}} = (\psi_1|_{\Ws_{x_0}},\ldots,\psi_n|_{\Ws_{x_0}})$ for principal eigenfunctions $\psi_j$ of $\Phi$ satisfying \eqref{eq:koopman-efunc} with exponent $\lambda_j$ and $\Grp = \R$.
    	
    	Using \eqref{eq:classify-per-expand-1},\eqref{eq:th-classify-per-log-equal}, and the extension formula \eqref{eq:th-classify-per-extension-formula}, we obtain
    	\begin{align*}
    	\varphi|_{\Ws_{\Phi^{-t}(x_0)}}&=  \sum_{|\ell|+|m| \leq k}  c_{\ell,m}e^{-\mu t}\cdot \left(\chi^{[\ell]}\bar{\chi}^{[m]}\right) \circ \Phi^t|_{\Ws_{\Phi^{-t}(x_0)}}\\
    	&= \sum_{|\ell|+ |m|\leq k}  c_{\ell,m}e^{-\mu t}\cdot \left(\psi^{[\ell]}\bar{\psi}^{[m]}\right)|_{\Ws_{x_0}} \circ \Phi^t|_{\Ws_{\Phi^{-t}(x_0)}}\\
    	&= \sum_{|\ell|+ |m|\leq k} c_{\ell,m} e^{-(i\frac{2\pi}{\tau}j_{\ell,m})t}\cdot \left(\psi^{[\ell]}\bar{\psi}^{[m]}\right)|_{\Ws_{\Phi^{-t}(x_0)}}\\
    	&= \sum_{|\ell|+|m|\leq k} c_{\ell,m}  \left(\psi^{[\ell]}\bar{\psi}^{[m]}\psi_\theta^{j_{\ell,m}}\right)|_{\Ws_{\Phi^{-t}(x_0)}}
    	\end{align*} 
    	for all $t\in \R$ as desired.
    	To obtain the last equality we used the fact that $\psi_\theta|_{\Ws_{x_0}}\equiv 1$, so the extension formula \eqref{eq:th-classify-per-extension-formula} implies that $\psi_\theta|_{\Ws_{\Phi^{-t}(x_0)}}\equiv e^{-i\frac{2\pi}{\tau}t}$ and hence also $\left(\psi_\theta^{j_{\ell,m}}\right)|_{\Ws_{\Phi^{-t}(x_0)}}\equiv e^{-(i\frac{2\pi}{\tau}j_{\ell,m})t}.$  
        This completes the proof.
    \end{proof}

	\section{Proofs of the main results}\label{sec:proofs-main-results}
	\subsection{Proof of Theorem \ref{th:main-thm}}
	In this section we prove Theorem \ref{th:main-thm}, which we repeat here for convenience.
	\ThmMain*	
	We prove the uniqueness and existence portions of Theorem \ref{th:main-thm} in the following \S  \ref{sec:main-proof-uniq} and \S \ref{sec:main-proof-exist}, respectively.
	Some of our statements and proofs make use of higher-order derivatives of maps between Euclidean spaces \cite[Sec.~A.5]{smoothInvariant} and the fact that a multilinear map is equivalent to a linear map out of a tensor product (the ``universal property of the tensor product'').
		
	\subsubsection{Proof of uniqueness}\label{sec:main-proof-uniq}
	In this section, we prove the uniqueness portion of Theorem \ref{th:main-thm}.
	The proof of uniqueness consists of an algebraic part and an analytic part.
	The algebraic portion is carried out in Lemmas \ref{lem:nonresonance-implies-invertible} and \ref{lem:jets-zero-maps}, and the analytic portion is carried out in Lemma \ref{lem:psi-identically-0}.
	
	\begin{Lem}\label{lem:nonresonance-implies-invertible}
		Let $k \in \N_{\geq 1}\cup \{+\infty\}$, $m,n\in \N_{\geq 1}$, $X\in \C^{m\times m}$, and $Y\in \R^{n \times n}$ be such that $(X,Y)$ is $k$-nonresonant.
        For all  $1 < i < k + 1$, let $\Ll((\R^n)^{\otimes i},\C^m)$ denote the space of linear maps from the $i$-fold tensor product $(\R^n)^{\otimes i}$ to $\C^m$, and define the linear operator
        \begin{equation}
        T_i \colon \Ll((\R^n)^{\otimes i},\C^m) \to \Ll((\R^n)^{\otimes i},\C^m), \qquad T_i(P)\coloneqq P Y^{\otimes i} - X P.
        \end{equation}
        (By this formula we mean that $T_i(P)$ acts on tensors $\tau \in (\R^n)^{\otimes i}$ via $\tau \mapsto P(Y^{\otimes i}(\tau)) - XP(\tau)$.)
                
        Then for all $1 < i < k + 1$, $T_i$ is a linear isomorphism.
        (The conclusion holds vacuously if $k = 1$.)		
	\end{Lem}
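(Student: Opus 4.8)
The plan is to show that $T_i$ is injective; since $\Ll((\R^n)^{\otimes i},\C^m)$ is a finite-dimensional complex vector space and $T_i$ is $\C$-linear, injectivity is equivalent to $T_i$ being an isomorphism. Concretely, I will exhibit a basis in which $T_i$ is represented by a block-triangular matrix whose diagonal blocks are all invertible.

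First I would complexify. Every $\R$-linear map $(\R^n)^{\otimes i}\to\C^m$ extends uniquely to a $\C$-linear map $(\C^n)^{\otimes i}\to\C^m$, giving a complex-linear isomorphism $\Ll((\R^n)^{\otimes i},\C^m)\cong\Ll_\C((\C^n)^{\otimes i},\C^m)$ under which $T_i(P)=PY^{\otimes i}-XP$ corresponds to $\widetilde P\mapsto\widetilde P\,Y^{\otimes i}-X\widetilde P$ with $Y$ now regarded as acting on $\C^n$; so I may assume $Y\in\C^{n\times n}$. Next I would triangularize: choose a basis $e_1,\dots,e_n$ of $\C^n$ in which $Y$ is upper triangular with diagonal entries $\lambda_1,\dots,\lambda_n$. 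Ordering the induced tensor basis $\{e_{j_1}\otimes\cdots\otimes e_{j_i}\}$, indexed by multi-indices $J=(j_1,\dots,j_i)$, lexicographically makes $Y^{\otimes i}$ upper triangular with $J$-th diagonal entry $\lambda_{j_1}\cdots\lambda_{j_i}$.

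Then, writing a map $\widetilde P\colon(\C^n)^{\otimes i}\to\C^m$ in terms of its ``columns'' $\widetilde P_J\coloneqq\widetilde P(e_J)\in\C^m$ and grouping these as blocks ordered by $J$, upper-triangularity of $Y^{\otimes i}$ gives $(\widetilde P\,Y^{\otimes i})_J=(\lambda_{j_1}\cdots\lambda_{j_i})\widetilde P_J+\sum_{J'<J}(\ast)\widetilde P_{J'}$, so that $T_i$ is block triangular with $J$-th diagonal block $(\lambda_{j_1}\cdots\lambda_{j_i})\,\id_{\C^m}-X$. This block is invertible exactly when $\lambda_{j_1}\cdots\lambda_{j_i}$ is not an eigenvalue of $X$. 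Writing $\lambda_{j_1}\cdots\lambda_{j_i}=\lambda_1^{m_1}\cdots\lambda_n^{m_n}$ with $m_1+\cdots+m_n=i$ and recalling $2\leq i\leq k$, the $k$-nonresonance hypothesis \eqref{eq:nonres-lem} says precisely that $\mu_\ell\neq\lambda_1^{m_1}\cdots\lambda_n^{m_n}$ for every eigenvalue $\mu_\ell$ of $X$; hence every diagonal block is invertible, so the block-triangular operator $T_i$ is invertible. (Conceptually, $T_i=R-L$ where $R(P)=PY^{\otimes i}$ and $L(P)=XP$ commute, whence $\textnormal{spec}(T_i)\subseteq\textnormal{spec}(Y^{\otimes i})-\textnormal{spec}(X)$ and nonresonance excludes $0$; the computation above just makes this explicit without invoking a spectral-mapping lemma.)

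The only place requiring genuine care — the main obstacle, such as it is — is the bookkeeping in the triangularization: checking that the diagonal of $Y^{\otimes i}$ really is $(\lambda_{j_1}\cdots\lambda_{j_i})_J$ for a suitable ordering, and that the exponent range $2\leq m_1+\cdots+m_n=i\leq k$ arising this way matches exactly the range appearing in Definition \ref{def:nonres}. Everything else is routine linear algebra, and the case $k=1$ is vacuous since no $i$ satisfies $1<i\leq1$.
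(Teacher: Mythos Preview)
Your proof is correct and complete. It takes a genuinely different route from the paper's. The paper first assumes both $X$ and $Y$ are semisimple, diagonalizes, and explicitly exhibits the eigenvectors $f_p\otimes e^{\otimes[\ell]}$ of $T_i$ with eigenvalues $\lambda_{j_1}\cdots\lambda_{j_i}-\mu_p$; it then handles general $X,Y$ by a density-and-continuity argument (semisimple matrices are dense, eigenvalues vary continuously). You instead only \emph{triangularize} $Y$, which is always possible, and observe that $T_i$ is block upper triangular with diagonal blocks $\lambda_{j_1}\cdots\lambda_{j_i}\,\id_{\C^m}-X$; nonresonance makes each block invertible. Your approach is more elementary in that it treats the non-semisimple case directly and avoids the limiting argument entirely, at the cost of not identifying the full eigenstructure of $T_i$ (which the paper does, at least in the semisimple case). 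Both are standard; your parenthetical remark that $T_i=R-L$ with $R,L$ commuting is exactly the ``Sylvester equation'' viewpoint underlying both arguments.
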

	\begin{proof}
    Let $\lambda_1,\ldots, \lambda_n$ and $\mu_1,\ldots, \mu_m$ respectively be the eigenvalues of $Y$ and $X$ repeated with multiplicity.
    
    First assume that $Y$ and $X$ are both semisimple, i.e., diagonalizable over $\C$.
    Identifying $Y$ with its complexification, let $e_1,\ldots, e_n \in \C^n$ be a basis of eigenvectors for $Y$ and let $e^1, \ldots, e^n \in (\C^n)^*$ be the associated dual basis. 
    Let $f_1,\ldots, f_m \in \C^m$ be a basis of eigenvectors for $X$.
    Fix any integer $i$ with $1< i < k + 1$, any $p\in \{1,\ldots, m\}$, and any multi-indices $\ell,j\in \N_{\geq 1}^i$; defining $e^{\otimes[\ell]}\coloneqq e^{\ell_1}\otimes \cdots \otimes e^{\ell_i}$ and similarly for $e_{\otimes [j]}$, we compute
    \begin{align*}
    T_i\left(f_p \otimes e^{\otimes [\ell]}\right)\cdot e_{\otimes [j]} &= \lambda_{j_1}\cdots \lambda_{j_n}\cdot  (e^{\otimes[\ell]}\cdot e_{\otimes[j]})f_p
    - \mu_p\cdot (e^{\otimes[\ell]}\cdot e_{\otimes[j]}) f_p 
    \\&= \delta^\ell_j\cdot \left(\lambda_{j_1}\cdots \lambda_{j_i} - \mu_p\right)f_p 
    \end{align*}
    (no summation implied), where the multi-index Kronecker delta is defined by $\delta^{\ell}_\ell = 1$ and $\delta^\ell_j = 0$ if $\ell \neq j$.
    Hence the $f_p \otimes e^{\otimes[\ell]}$ are eigenvectors of $T_i$ with eigenvalues $\left(\lambda_{\ell_1}\cdots \lambda_{\ell_i} - \mu_p\right)$, and dimension counting implies that these are all of the eigenvector/eigenvalue pairs.
    The $k$-nonresonance assumption implies that none of these eigenvalues are zero, so $T_i$ is invertible if $Y$ and $X$ are both semisimple.
    
    Since the operator $T_i$ depends continuously on the matrices $X$ and $Y$, since eigenvalues of a matrix depend continuously on the matrix, and since semisimple matrices are dense, it follows by continuity that the eigenvalues of $T_i$ are all of the form $\left(\lambda_{\ell_1}\cdots \lambda_{\ell_i} - \mu_p\right)$ even if one or both of $X$ and $Y$ are not semisimple (cf. \cite[p.~37]{nelson1970topics}), and these eigenvalues are all nonzero by the asssumption that $(X,Y)$ is $k$-nonresonant.
    Hence $T_i$ is still invertible in the case of general $X$ and $Y$.		
	\end{proof}
	
	\begin{Lem}\label{lem:jets-zero-maps}
		Let $F\in C^1(\R^n,\R^n)$ have the origin as a fixed point, where $n\geq 1$.
		Let $k \in \N_{\geq 1}\cup \{+\infty\}$, $m\in \N_{\geq 1}$, and $X\in \C^{m\times m}$ be such that $(X,\D_0 F)$ is $k$-nonresonant.
		Assume that $\psi\in C^k(\R^n,\C^m)$ satisfies $\D_0 \psi = 0$ and 
		\begin{equation}\label{eq:lem-uniq-psi-conj}
		\psi \circ F = X \psi.
		\end{equation} 
		Then it follows that $\psi(0) = X\psi(0)$ and
		\begin{equation*}
		  \D^i_0 \psi= 0
		\end{equation*}
		for all $1 < i < k + 1$.   (The conclusion holds vacuously if $k = 1$.)
	\end{Lem}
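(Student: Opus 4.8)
The plan is to induct on $i$, using at each stage the operator $T_i$ from Lemma~\ref{lem:nonresonance-implies-invertible}. The base case is the hypothesis $\D_0\psi = 0$, and the inductive hypothesis for a fixed integer $i$ with $2\le i\le k$ is that $\D_0^j\psi = 0$ for all $1\le j < i$. Throughout, write $Y \coloneqq \D_0 F$ and, for $x\in\R^n$, write $\D_0^i\psi(x^{\otimes i})\in\C^m$ for the value of the $i$-th derivative of $\psi$ on the pure tensor $x^{\otimes i}$, so that in the notation of Lemma~\ref{lem:nonresonance-implies-invertible} one has $T_i(\D_0^i\psi) = \D_0^i\psi\, Y^{\otimes i} - X\,\D_0^i\psi$.

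First I would Taylor-expand both sides of \eqref{eq:lem-uniq-psi-conj} to order $i$ about the origin. Since $\psi\in C^k$ and $i\le k$, Taylor's theorem with Peano remainder together with the inductive hypothesis gives $\psi(x) = \psi(0) + \frac{1}{i!}\D_0^i\psi(x^{\otimes i}) + o(\norm{x}^i)$ as $x\to 0$. Since $F\in C^1$ and $F(0)=0$, one has $F(x) = Yx + o(\norm{x})$, hence $\norm{F(x)} = O(\norm{x})$ and $F(x)^{\otimes i} = Y^{\otimes i}(x^{\otimes i}) + o(\norm{x}^i)$, because every cross term in the expansion of $(Yx + o(\norm{x}))^{\otimes i}$ carries a factor $o(\norm{x})$. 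Substituting $x\mapsto F(x)$ into the expansion of $\psi$, using that $\D_0^i\psi$ is a bounded linear map and that $o(\norm{F(x)}^i) = o(\norm{x}^i)$, I obtain $\psi(F(x)) = \psi(0) + \frac{1}{i!}(\D_0^i\psi\, Y^{\otimes i})(x^{\otimes i}) + o(\norm{x}^i)$. Comparing this with $X\psi(x) = X\psi(0) + \frac{1}{i!}(X\,\D_0^i\psi)(x^{\otimes i}) + o(\norm{x}^i)$ and cancelling the constant terms via $\psi(0) = X\psi(0)$ --- which follows by evaluating \eqref{eq:lem-uniq-psi-conj} at $0$ --- yields
\[
T_i(\D_0^i\psi)(x^{\otimes i}) = \bigl(\D_0^i\psi\, Y^{\otimes i} - X\,\D_0^i\psi\bigr)(x^{\otimes i}) = o(\norm{x}^i)\qquad\text{as } x\to 0.
\]

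Each component of the map $x\mapsto T_i(\D_0^i\psi)(x^{\otimes i})$ is a homogeneous polynomial of degree $i$ in the coordinates of $x$; evaluating along rays $x = tv$ and letting $t\to 0^+$ shows that any such polynomial that is $o(\norm{x}^i)$ must vanish identically. Hence $T_i(\D_0^i\psi)$ vanishes on every pure tensor $x^{\otimes i}$, and since $T_i(\D_0^i\psi)$ is a symmetric multilinear map --- both $P\mapsto P\,Y^{\otimes i}$ and $P\mapsto XP$ carry symmetric maps to symmetric maps --- polarization gives $T_i(\D_0^i\psi) = 0$ in $\Ll((\R^n)^{\otimes i},\C^m)$. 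By Lemma~\ref{lem:nonresonance-implies-invertible}, $T_i$ is invertible, so $\D_0^i\psi = 0$, completing the inductive step. Since the argument applies to every finite $i\le k$, the case $k=\infty$ is covered as well.

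The step I expect to require the most care is the very fact that $F$ is assumed only $C^1$: this forbids a direct application of the Fa\`{a} di Bruno chain-rule formula to $\psi\circ F$ at the origin and instead forces one to do the bookkeeping at the level of Peano-type Taylor remainders, as above. This is also precisely why the clean estimate $F(x)^{\otimes i} = Y^{\otimes i}(x^{\otimes i}) + o(\norm{x}^i)$ --- rather than an expansion involving higher derivatives of $F$ --- is available and is the right tool. Once this is set up, the rest is elementary linear algebra given Lemma~\ref{lem:nonresonance-implies-invertible}.
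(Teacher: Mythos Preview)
Your proof is correct and follows the same inductive strategy as the paper: reduce to $T_i(\D_0^i\psi)=0$ and invoke Lemma~\ref{lem:nonresonance-implies-invertible}. The paper reaches \eqref{eq:lem-chain-rule} by ``differentiating $(i+1)$ times using the chain rule,'' appealing in a footnote to Fa\`a di Bruno and noting that the inductive hypothesis kills every term involving higher derivatives of $F$; but since $F$ is assumed only $C^1$, those higher derivatives need not exist, so the formal chain-rule computation is not directly applicable as written. Your route via Taylor expansions with Peano remainder sidesteps this: you use only $F(x)=Yx+o(\norm{x})$, which is exactly what $C^1$ provides, and the identity $F(x)^{\otimes i}=Y^{\otimes i}x^{\otimes i}+o(\norm{x}^i)$ follows from multilinearity alone. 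This is the more careful execution under the stated hypotheses and is essentially the ``easy to deduce directly'' argument the paper's footnote alludes to without spelling out.
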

	
	\begin{Rem}
		We can restate the conclusion of Lemma \ref{lem:jets-zero-maps} in the language of jets \cite{hirsch1976differential,golubitsky1985singularities,smoothInvariant}.
		If $\psi$ is a linearizing factor such that the $1$-jet $j^1_0(\psi - \psi(0)) = 0$, then automatically the $k$-jet $j^k_0(\psi - \psi(0)) = 0$.  	
	\end{Rem}
	
	\begin{proof}
	That $\psi(0) = X\psi(0)$ follows from setting $x = 0$ in \eqref{eq:lem-uniq-psi-conj} and using the assumption that $F(0)=0$.
	We will prove the remaining claim that $\D_0^i\psi = 0$ for $1\leq i< k + 1$ by induction on $i$.
	The base case of the induction, $\D_0^1 \psi = \D_0 \psi = 0$, is one of the hypotheses of the lemma.
	For the inductive step, assume that $\D_0 \psi = \cdots = \D_0^{i}\psi = 0$ for an integer $i$ satisfying $1 \leq i < k$.
	If it were the case that $F\in C^{i+1}$, one way to proceed would be to differentiate \eqref{eq:lem-uniq-psi-conj} $(i+1)$ times and somehow deduce that $\D_{0}^{i+1}\psi=0$.
	However, we are assuming only that $F\in C^1$, so that approach is problematic.
	We instead proceed as follows.
	
	By Taylor's theorem and the inductive hypothesis, we have (here $x^{\otimes \ell}$ denotes the tensor product of $x$ with itself $\ell$ times)
	\begin{equation}\label{eq:F-psi-taylor}
	F(x) = \D_0 F\cdot x + R_F(x), \qquad \psi(x) = \psi(0) + \D_{0}\psi^{i+1}\cdot x^{\otimes (i+1)} + R_\psi(x)
	\end{equation}
	for all $x$, where the remainders satisfy $\lim_{x\to 0}\frac{R_F(x)}{\norm{x}}=0$ and  $\lim_{x\to 0}\frac{R_\psi(x)}{\norm{x}^{i+1}}=0$.
	It follows that 
	\begin{equation}\label{eq:psi-of-F}
	\psi(F(x)) = \psi(0) +  \D_{0}\psi^{i+1}\cdot (\D_0 F \cdot x)^{\otimes (i+1)} + R(x), 
	\end{equation}
	where 
	\begin{equation}\label{eq:R-def}
	R(x) \coloneqq R_\psi(F(x)) + \D_{0}\psi^{i+1}\cdot\sum_{\ell=0}^{i}C_\ell [(\D_0 F\cdot x)^{\otimes \ell}\otimes (R_F(x))^{\otimes (i+1-\ell)}]
	\end{equation}
	for suitable combinatorially determined constants $C_\ell> 0$.
	Rewriting \eqref{eq:lem-uniq-psi-conj} using \eqref{eq:F-psi-taylor} and \eqref{eq:psi-of-F}, we obtain
	\begin{equation}\label{eq:lem-uniq-psi-rewritten}
	 \psi(0) +  \D_{0}\psi^{i+1}\cdot (\D_0 F \cdot x)^{\otimes (i+1)} + R(x) = X\psi(0) + X \D_{0}\psi^{i+1}\cdot x^{\otimes (i+1)} + XR_\psi(x).
	\end{equation}
	In order to deduce the information we need from \eqref{eq:lem-uniq-psi-rewritten}, we now show that $\lim_{x\to 0}\frac{R(x)}{\norm{x}^{i+1}}=0$.
	Using the tensor product property $a^{p+q} v^{\otimes p}\otimes w^{\otimes q} = (av)^{\otimes p}\otimes (aw)^{\otimes q}$ for a scalar $a$ and vectors $v$ and $w$, this follows from \eqref{eq:R-def} and the computation
	\begin{equation}\label{eq:R-is-o-iplus1}
	\begin{split}
	\lim_{x\to 0}\frac{R(x)}{\norm{x}^{i+1}} &=  \lim_{x\to 0}\frac{R_{\psi}(F(x))}{\norm{F(x)}^{i+1}}\frac{\norm{F(x)}^{i+1}}{\norm{x}^{i+1}} + \D_0 \psi^{i+1}\cdot \lim_{x\to 0}\sum_{\ell=1}^{i+1}C_\ell \left[\left(\frac{\D_0 F\cdot x}{\norm{x}}\right)^{\otimes (i+1-\ell)}\otimes \left(\frac{R_F(x)}{\norm{x}}\right)^{\otimes \ell}\right]\\ &= 0.
	\end{split}
	\end{equation}
	The first limit on the right is zero since $F(x)\to 0$ as $x\to 0$ and since $F\in C^1$, so $\norm{F(x)}/\norm{x}\leq \max_{\norm{y}\leq 1}\norm{\D_{y}F}< +\infty$ when $\norm{x}\leq 1$ by the mean value theorem; the second limit on the right is zero since $\norm{\D_0 F\cdot x}/\norm{x}\leq\norm{\D_0 F}< +\infty$.
	
	Let $r> 0$ and $\hat{x}$ be a unit vector.
	Set $x = r \hat{x}$ in  \eqref{eq:lem-uniq-psi-rewritten}.
    By the first sentence of the proof, $\psi(0) = X\psi(0)$.
	Canceling these equal terms from \eqref{eq:lem-uniq-psi-rewritten}, dividing both sides of the resulting equation by $r^{i+1}$, and taking the limit as $r\to 0$ using \eqref{eq:R-is-o-iplus1} yields $\D_0\psi^{i+1} (\D_0 F)^{\otimes (i+1)}\cdot \hat{x}^{\otimes (i+1)} = X\D_0 \psi^{i+1}\cdot \hat{x}^{\otimes (i+1)}$ for all unit vectors $\hat{x}$.
	Since derivatives are symmetric tensors, the latter equation has the form $S\cdot \hat{x}^{\otimes(i+1)} = T\cdot \hat{x}^{\otimes(i+1)}$ with symmetric tensors $S\coloneqq\D_0\psi^{i+1} (\D_0 F)^{\otimes (i+1)}$ and $T\coloneqq X\D_0 \psi^{i+1}$.
	Since symmetric tensors are completely determined by their action on tensors of the form $\hat{x}^{\otimes (i+1)}$ \cite[Thm~1]{thomas2014polarization}, it follows that $0 = S - T$, or 
	\begin{equation}\label{eq:lem-chain-rule}
	0 = \D_0^{i+1} \psi \left(\D_0 F\right)^{\otimes (i+1)} - X\D_0^{i+1} \psi = T_{(i+1)}(\D_0^{i+1}\psi),
	\end{equation}
	where the linear operator $T_{(i+1)} \colon \Ll((\R^n)^{\otimes (i+1)},\C^m)\to \Ll((\R^n)^{\otimes (i+1)},\C^m)$ is as defined in Lemma \ref{lem:nonresonance-implies-invertible} (taking $Y \coloneqq \D_0 F$).	
	Lemma \ref{lem:nonresonance-implies-invertible} implies that $T_{(i+1)}$ is invertible since $(X,\D_0 F)$ is $k$-nonresonant, so \eqref{eq:lem-chain-rule} implies that $\D_0^{i+1}\psi = 0$.
	This completes the inductive step and the proof.
	\end{proof}
	
	\begin{Lem}\label{lem:psi-identically-0}
		Let $F\in C^1(\R^n,\R^n)$ be a diffeomorphism such that the origin is a globally attracting hyperbolic fixed point for the dynamical system defined by iterating $F$, where $n\geq 1$.
		Fix $k\in \N_{\geq 1}\cup \{+\infty\}$ and $0\leq \alpha \leq 1$.
		Let $e^A\in \GL(m,\C)$ have spectral radius $\rho(e^A) < 1$ and satisfy $\nu(e^A,\D_0 F) < k + \alpha$.
		Assume $\psi\in \CLKA(\R^n,\R^m)$ satisfies
		\begin{equation}\label{eq:lem-uniq-remainder-conj}
		\psi \circ F = e^{A} \psi
		\end{equation}
		and
		\begin{equation}\label{eq:psi-ders-vanish}
		 \D^i_0 \psi= 0
		\end{equation}
		for all integers $0\leq i < k+\alpha$ (note the case $\alpha = 0$ which does not require vanishing of the $k$-th derivative).
		Then $\psi \equiv 0$.
	\end{Lem}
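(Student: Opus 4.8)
\emph{Proof plan.}
Since only $C^1$ regularity of $F$ is assumed, the argument is carried out via pointwise estimates obtained by iterating the functional equation, rather than by a contraction mapping argument on $\CKA(B,\C^m)$ as in the existence proof; the essential tool is the use of adapted norms in the sense of \cite[Sec.~A.1]{cabre2003parameterization1}. First I would record that $\psi(0)=0$: evaluating $\psi\circ F=e^A\psi$ at the fixed point gives $(I-e^A)\psi(0)=0$, and $I-e^A$ is invertible because $\rho(e^A)<1$. Combined with the hypothesis $\D_0^i\psi=0$ for $1\le i\le k$, this says the full $k$-jet of $\psi$ at $0$ vanishes, so Taylor's theorem and the $\CLKA$ regularity give a pointwise bound near $0$: there are a neighborhood $V\ni 0$ and $C>0$ with $\norm{\psi(v)}\le C\norm{v}^{k+\alpha}$ on $V$ when $\alpha>0$, while when $\alpha=0$ one only obtains $\norm{\psi(v)}=o(\norm v^k)$. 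Fix a small compact forward-invariant neighborhood $U$ of $0$ (constructed below), set $w(r):=r^{k+\alpha}$ (read $w(r):=r^k$ when $\alpha=0$), and let $\Theta(r):=\sup\{\norm{\psi(v)}/w(\norm v):v\in U\setminus\{0\},\ \norm v\le r\}$; the Taylor bound makes $\Theta$ finite on $U$, and when $\alpha=0$ it also forces $\Theta(r)\to 0$ as $r\to 0^+$.

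Next I would bring in adapted norms: given $\epsilon>0$, choose a norm on $\R^n$ together with $U$ so that $\norm{Fv}\le\gamma\norm v$ on $U$ with $\gamma:=\rho(\D_0F)+\epsilon<1$, and a norm on $\C^m$ with $\norm{e^{-A}}_{\mathrm{op}}\le\rho(e^{-A})+\epsilon$. Rewriting the functional equation as $\psi(v)=e^{-A}\psi(Fv)$ and using $w(\gamma r)=\gamma^{k+\alpha}w(r)$ together with monotonicity of $w$ yields the self-improving inequality $\Theta(r)\le\beta\,\Theta(\gamma r)$ for all small $r$, where $\beta:=\norm{e^{-A}}_{\mathrm{op}}\,\gamma^{k+\alpha}$ (with $\gamma^k$ in place of $\gamma^{k+\alpha}$ when $\alpha=0$); iterating gives $\Theta(r)\le\beta^N\Theta(\gamma^N r)$ for every $N\ge 1$.

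The point is that the spectral spread hypothesis is exactly what places $\beta$ on the correct side of $1$. Unwinding Definition~\ref{def:nonres-spread} (see Figure~\ref{fig:spread}), $\nu(e^A,\D_0F)=\ln\bigl(\min_{\mu\in\textnormal{spec}(e^A)}|\mu|\bigr)/\ln(\rho(\D_0F))$, a ratio of two negative numbers, so as $\epsilon\to 0$ the constant $\beta$ tends to $\rho(\D_0F)^{k+\alpha}/\min_\mu|\mu|$, which is $<1$ precisely when $\nu(e^A,\D_0F)<k+\alpha$. Hence when $\nu(e^A,\D_0F)<k+\alpha$ I would fix $\epsilon$ small so that $\beta<1$, giving $\Theta(r)\le\beta^N\sup_U(\norm{\psi}/w)\to 0$, so $\Theta\equiv 0$ and $\psi$ vanishes on $U$; in the remaining boundary case ($\alpha=0$ and $\nu(e^A,\D_0F)=k$), where one can only make $\beta$ arbitrarily close to $1$, I would instead use $\Theta(r)\to 0$ so that $\Theta(r)\le\Theta(\gamma^N r)\to 0$ once the norms are chosen with $\beta=1$. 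In all cases $\psi|_U\equiv 0$, and the global conclusion follows at once: for any $v\in\R^n$ asymptotic stability gives $F^Nv\to 0$, hence $F^Nv\in U$ for $N$ large, whence $\psi(v)=e^{-NA}\psi(F^Nv)=0$.

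The step I expect to be genuinely delicate is the boundary case $\alpha=0$, $\nu(e^A,\D_0F)=k$ (for instance $k=1$, $\alpha=0$, $\nu=1$): the naive iteration only produces $\beta$ slightly larger than $1$, and $\Theta(r)\to 0$ alone does not close the argument unless $\beta=1$ exactly. Achieving $\beta=1$ requires an \emph{exactly} adapted norm, which exists if and only if the eigenvalues of $\D_0F$ and of $e^A$ of extremal modulus are semisimple; in the presence of a Jordan block at the top one must instead replace the pure power weight $\norm v^k$ by a weight adapted to the Jordan filtration and check that $\Theta$ remains finite and that the $o(\norm v^k)$ estimate survives the change of weight. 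The non-boundary cases are routine once the adapted norms are fixed, the only other care needed being the standard construction of the contracting forward-invariant neighborhood $U$.
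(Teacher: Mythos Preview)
Your argument for the strict case $\nu(e^A,\D_0 F)<k+\alpha$ is correct and takes a genuinely different route from the paper's. You work with a single uniform rate $\norm{Fv}\le\gamma\norm{v}$ obtained from an adapted norm, form the weighted supremum $\Theta$, and iterate the self-improving inequality; the paper instead argues trajectory by trajectory, using that each orbit $x_j=F^j(x_0)$ approaches the origin tangent to a specific generalized eigenspace $E_\lambda$ so that $\norm{x_j}$ behaves like $|\lambda|^j$, and then studies $(e^A/|\lambda|^r)^j\psi(x_0)$ directly. Your route is arguably cleaner in the non-boundary regime because it avoids the invariant-manifold input (the pseudohyperbolic stable and center-stable manifold theorems in the bare $C^1$ setting) that the paper invokes to identify the asymptotic tangent direction and rate of a trajectory.

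The boundary case $\alpha=0$, $\nu(e^A,\D_0 F)=k$ is where the two approaches diverge, and the paper's trajectory-wise viewpoint is exactly what closes the gap you correctly flag. Rather than attempting to arrange $\beta=1$ (which, as you note, is obstructed both by Jordan blocks on either side and by the nonlinear $o(\norm{v})$ remainder in $F$), the paper exploits that the relevant rate along a given orbit is the \emph{specific} $|\lambda|$ for that orbit, not the uniform worst case $\rho(\D_0 F)$. From $e^{jA}\psi(x_0)=\psi(x_j)$ and $\psi(x_j)=o(\norm{x_j}^k)$ one deduces $(e^A/|\lambda|^k)^j\psi(x_0)\to 0$; the hypothesis $\nu\le k$ forces every eigenvalue of $e^A/|\lambda|^k$ to have modulus at least $1$, so in a Jordan basis for $e^A$ the diagonal entries of the upper-triangular matrix $(e^A/|\lambda|^k)^j$ are bounded below by $1$ in modulus, and hence at least one coordinate of $(e^A/|\lambda|^k)^j\psi(x_0)$ stays bounded away from zero unless $\psi(x_0)=0$. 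All of the Jordan-form work is thus carried out on the $e^A$ side, and no weight adapted to the Jordan filtration of $\D_0 F$ is needed.
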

	\begin{proof}
		We first observe that since (i) $0$ is asymptotically stable for the iterated dynamical system defined by $F$, (ii) $\psi$ is continuous, and (iii) $\rho(e^{A})<1$, it follows that $\psi(0) = 0$ since 
		\begin{equation}\label{eq:psi-zero-zero}
		0 = \lim_{n\to \infty}e^{nA}\psi(x_0) = \lim_{n\to\infty}\psi(F^n(x_0)) = \psi(0)
		\end{equation}
		for any $x_0 \in \R^n$.
		The second equality follows from \eqref{eq:lem-uniq-remainder-conj}.
		
		For the remainder of the proof, fix any $r>0$ with $\nu(e^A,\D_0 F) < r< k+\alpha$, fix $x_0\in \R^n\setminus \{0\}$, and define $x_j \coloneqq F^j(x_0)$ for $j\in \N$.
		Let $0\leq k'<r$ be the largest integer smaller than $r$. 
		Taylor's theorem for $\CLKA$ functions \cite[p.~162]{de1999regularity} says that $$\psi(x) = \sum_{i=0}^{k'} \D_0^i \psi\cdot x^{\otimes i} + R(x),$$
		where $\lim_{x\to 0}\frac{R(x)}{\norm{x}^{r}} = 0$.
		Equations \eqref{eq:psi-ders-vanish} and \eqref{eq:psi-zero-zero} imply that all of the terms in the sum above vanish, so we obtain $\psi= R$.
		Using \eqref{eq:lem-uniq-remainder-conj} it follows that $e^{j A}\psi = \psi \circ F^j = R\circ F^j $, and since $x_j = F^j(x_0)$ we obtain
		\begin{equation}\label{eq:main-thm-1st-eqn}
		e^{jA}\psi(x_0) = R(x_j), \qquad \lim_{x\to 0}\frac{R(x)}{\norm{x}^{r}} = 0.
		\end{equation}
		Denote by $M\coloneqq \rho(\D_0 F)<1$ the spectral radius of $\D_0 F$.
		Since $\nu(e^A,\D_0 F)< r$, \eqref{eq:spread-def} implies that all eigenvalues $\mu$ of $e^A$ satisfy $|\mu|> M^r$.
		Since this inequality is strict, by continuity there is $\epsilon > 0$ such that $0 < (M+\epsilon) < 1$ and 
		\begin{equation}\label{eq:M-eps-mu-bound}
		\forall \mu \in \textnormal{spec}(e^A)\colon |\mu|> (M+\epsilon)^r.
		\end{equation}
		By replacing $\norm{\slot}$ with an \concept{adapted norm}, we may assume that $\norm{\D_0 F}<(M+\epsilon/2)$.\footnote{Such an adapted norm always exists. It can be constructed as the Euclidean norm with respect to a choice of basis placing $\D_0 F$ in ``$\epsilon$-Jordan form'' \cite[pp.~279--280]{hirsch1974differential}, wherein the off-diagonal unity entries of the usual Jordan normal form are replaced by $\epsilon$. An alternative construction of an adapted norm proceeds by suitably averaging a given norm along the dynamics linearized at the fixed point \cite[Sec.~A.1]{cabre2003parameterization1}; an analogous technique also works in more general situations.}
		Since $\norm{F(x)-\D_0F\cdot x}/\norm{x}\to 0$ as $\norm{x} \to 0$, there exists $b > 0$ such that $\norm{F(x)}< (M+\epsilon)\norm{x}$ if $\norm{x} < b$ (cf. \cite[p.~281]{hirsch1974differential}).
		Since the origin is globally asymptotically stable and since $\{\norm{x}<b\}$ is positively invariant by the preceding sentence (recall that $(M+\epsilon)<1$), there exists $j_0\in \N_{\geq 1}$ such that $\norm{x_j}< b$ for all $j\geq j_0$.
		Hence for all $j\geq j_0$:  
		\begin{equation}\label{eq:norm-xj-bound}
		\norm{x_j}<(M+\epsilon)^{j-j_0}\norm{x_{j_0}} = C\cdot (M+\epsilon)^j\norm{x_0},
		\end{equation}
		where $C\coloneqq (M+\epsilon)^{-j_0}\norm{x_{j_0}}\norm{x_0}^{-1}$.
		
		Dividing both sides of \eqref{eq:main-thm-1st-eqn} by $\norm{x_j}^{r}$, multiplying by $1=\frac{(M+\epsilon)^{jr}}{(M+\epsilon)^{jr}}$ and taking the limit as $j\to \infty$ yields 
		\begin{equation}\label{eq:main-thm-2nd-eqn}
		0 = \lim_{j\to\infty} e^{jA} \frac{\psi(x_0)}{\norm{x_j}^{r}} = \lim_{j\to \infty} \left(\frac{(M+\epsilon)^j}{\norm{x_j}}\right)^r \left(\frac{e^{A}}{(M+\epsilon)^{r}}\right)^j\psi(x_0).
		\end{equation}
		Since \eqref{eq:M-eps-mu-bound} implies that all eigenvalues of $\frac{e^A}{(M+\epsilon)^r}$ have modulus strictly larger than $1$, the moduli of all nonzero entries in the (upper triangular, complex) Jordan normal form of $(\frac{e^A}{(M+\epsilon)^r})^j$ approach $\infty$ as $j\to \infty$.\footnote{The desire for this conclusion was part of what motivated our definition of the spectral spread $\nu(\slot,\slot)$.}
		If $\psi(x_0)\neq 0$, it follows that the absolute value of at least one component of $(\frac{e^A}{(M+\epsilon)^r})^j\psi(x_0)$ with respect to the Jordan basis approaches $\infty$ as $j\to \infty$. 
		Moreover, \eqref{eq:norm-xj-bound} implies that $\left(\frac{(M+\epsilon)^j}{\norm{x_j}}\right)^r>C^{-r}\norm{x_0}^{-r}>0$ for all $j$, so the product of this quantity with the diverging quantity $(\frac{e^A}{(M+\epsilon)^r})^j\psi(x_0)$ also diverges as $j\to\infty$.
		It follows that \eqref{eq:main-thm-2nd-eqn} holds if and only if $\psi(x_0) = 0$.
		Since $x_0 \in \R^n\setminus\{0\}$ was arbitrary, and since we already obtained $\psi(0) = 0$ in \eqref{eq:psi-zero-zero}, it follows that $\psi \equiv 0$ on $\R^n$. 
		This completes the proof. 
	\end{proof}
	Using Lemmas \ref{lem:jets-zero-maps} and \ref{lem:psi-identically-0}, we now prove the uniqueness portion of Theorem \ref{th:main-thm}.
	\begin{proof}[Proof of the uniqueness portion of Theorem \ref{th:main-thm}]
	Since $x_0$ is globally asymptotically stable, the Brown-Stallings theorem \cite[Lem~2.1]{wilson1967structure} implies that there is a diffeomorphism $Q\approx \R^n$ sending $x_0$ to $0$, where $n = \dim(Q)$, so we may assume that $Q = \R^n$ and $x_0 = 0$.\footnote{For example, Wilson states in \cite[Thm~2.2]{wilson1967structure} this result for the special case of a flow generated by a $C^1$ vector field, but his argument based on the Brown-Stallings theorem \cite[Lem~2.1]{wilson1967structure} works equally well for any $C^1$ flow or diffeomorphism having a globally asymptotically stable fixed point.}
	Define the diffeomorphism $F\coloneqq \Phi^1$ to be the time-$1$ map.
	Let $\psi_1$ and $\psi_2$ be two functions satisfying $\D_0 \psi_i = B$ and $\psi_i \circ F = e^{A} \psi_i$ for $i = 1,2$.
	Then $\psi\coloneqq \psi_1-\psi_2$ satisfies $\D_0 \psi = 0$ and $\psi \circ F = e^{A} \psi$. 
	Lemma \ref{lem:jets-zero-maps} implies that $\D_0^i \psi =  0$ for all $1\leq i < k + 1$, and Lemma \ref{lem:psi-identically-0} then implies that $\psi_1 - \psi_2 = \psi \equiv 0$.
	If $e^{A}$ and $B$ are real, then we can define $\psi_2\coloneqq \bar{\psi}_1$ to be the complex conjugate of $\psi_1$, and so the preceding implies that $\psi_1 = \bar{\psi}_1$; hence $\psi_1$ is real if $e^{A}$ and $B$ are real.
	This completes the proof of the uniqueness statement of Theorem \ref{th:main-thm}.
	\end{proof}
	
	\subsubsection{Proof of existence}\label{sec:main-proof-exist}
	In this section, we prove the existence portion of Theorem \ref{th:main-thm}.
	As with the proof of the uniqueness portion, the proof consists of an algebraic part and an analytic part.
	The techniques we use in the existence proof are similar to those used in \cite{sternberg1957local,cabre2003parameterization1}.
	The algebraic portion of our proof is carried out in Lemma \ref{lem:existence-approx-conj}, and the analytic portion is carried out in Lemma \ref{lem-make-approx-exact}.
		
	\begin{Lem}[Existence and uniqueness of approximate polynomial linearizing factors for diffeomorphisms]\label{lem:existence-approx-conj}
		Fix $k \in \N_{\geq 1}$ and let $F\in C^k(\R^n,\R^n)$ have the origin as a fixed point, where $n\geq 1$. 
		Let $m\in \N_{\geq 1}$ and $X \in \C^{m\times m}$ be such that $(X,\D_0 F)$ is $k$-nonresonant, and assume $B\in \C^{m\times n}$ satisfies $$B \D_0 F = X B.$$
		Then there exists a unique degree-$k$ symmetric polynomial $P\colon \R^n\to \C^m$  vanishing at $0$ such that $\D_0 P = B$ and such that 
		\begin{equation}\label{eq:poly-conjugacy}
		P \circ F = X P + R,
		\end{equation}
		where $R$ satisfies $\D_0^i R = 0$ for all $0\leq i \leq k$.
		Furthermore, if $X\in \R^{m\times m}$ and $B\in \R^{m\times n}$ are real, then this unique polynomial $P\colon \R^n \to \R^m\subset \C^m$ is real.
	\end{Lem}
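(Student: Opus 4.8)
The plan is to build $P$ one homogeneous degree at a time. Write $P = P_1 + P_2 + \cdots + P_k$, where $P_j\colon \R^n \to \C^m$ is homogeneous of degree $j$ and is thus determined by the symmetric tensor $\D_0^j P_j \in \Ll((\R^n)^{\otimes j}, \C^m)$; the conditions that $P$ vanish at the origin and that $\D_0 P = B$ force $P_0 = 0$ and $P_1 = B$, and we will determine $P_2,\ldots,P_k$ recursively. First I would record the elementary observation that, with $P_1 = B$, the hypothesis $B \D_0 F = X B$ makes the $0$-th and $1$-st order Taylor coefficients of $R \coloneqq P\circ F - XP$ at the origin vanish automatically, independently of the higher $P_j$: indeed $R(0) = P(0) - XP(0) = 0$ since $F(0) = 0$ and $P(0) = 0$, and $\D_0 R = (\D_0 P)(\D_0 F) - X(\D_0 P) = B\D_0 F - XB = 0$.

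For the inductive step, I would assume that $2 \leq j \leq k$ and that $P_1,\ldots,P_{j-1}$ have already been chosen so that $\D_0^i R = 0$ for $0 \leq i \leq j-1$, and then differentiate equation \eqref{eq:poly-conjugacy} $j$ times at the origin. Using the higher-order chain rule (Fa\`{a} di Bruno's formula --- or, as in the proof of Lemma \ref{lem:jets-zero-maps}, simply tracking which terms carry the highest-order derivative of $P$), the $j$-th Taylor coefficient of $P\circ F$ takes the form
\[
\D_0^j(P\circ F) = \left(\D_0^j P_j\right)(\D_0 F)^{\otimes j} + C_j,
\]
where the term $\left(\D_0^j P_j\right)(\D_0 F)^{\otimes j}$ comes from the partition of $\{1,\ldots,j\}$ into singletons, and $C_j \in \Ll((\R^n)^{\otimes j}, \C^m)$ is a symmetric tensor built only from the derivatives $\D_0^i F$ with $i \leq j$ and the already-determined components $\D_0^i P = \D_0^i P_i$ with $i < j$. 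Hence
\[
\D_0^j R = \left(\D_0^j P_j\right)(\D_0 F)^{\otimes j} - X\,\D_0^j P_j + C_j = T_j\!\left(\D_0^j P_j\right) + C_j,
\]
with $T_j$ the operator from Lemma \ref{lem:nonresonance-implies-invertible} (taking $Y \coloneqq \D_0 F$). Since $(X,\D_0 F)$ is $k$-nonresonant, Lemma \ref{lem:nonresonance-implies-invertible} gives that $T_j$ is a linear isomorphism; it clearly preserves the subspace of symmetric tensors, so $\D_0^j P_j \coloneqq -T_j^{-1}(C_j)$ is the unique symmetric tensor --- equivalently $P_j$ is the unique homogeneous degree-$j$ polynomial --- making $\D_0^j R = 0$. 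Running the induction from $j = 2$ to $k$ produces $P$, and uniqueness of $P$ follows because every $P_j$ was the forced choice at its stage.

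For the reality claim, if $X$ and $B$ are real then $P_1 = B$ is real, and inductively $C_j$ is assembled from the real data $X$, $P_1,\ldots,P_{j-1}$, and the (real) derivatives of $F$, hence is a real tensor; since $T_j$ restricts to an $\R$-linear automorphism of the real symmetric tensors, $\D_0^j P_j = -T_j^{-1}(C_j)$ is real, so $P$ is real.

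The only genuinely nontrivial step is the Fa\`{a} di Bruno bookkeeping: one must verify that differentiating \eqref{eq:poly-conjugacy} $j$ times isolates exactly the term $(\D_0^j P_j)(\D_0 F)^{\otimes j}$ containing the unknown $P_j$, with the remainder $C_j$ depending only on lower-order components. Once that is in place, $k$-nonresonance together with Lemma \ref{lem:nonresonance-implies-invertible} immediately yields both existence and uniqueness of $P_j$ at each stage.
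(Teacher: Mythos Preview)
Your proposal is correct and follows essentially the same approach as the paper: both build $P$ one homogeneous degree at a time, use the higher-order chain rule to isolate the term $T_j(\D_0^j P_j)$ involving the unknown $P_j$, and invoke Lemma~\ref{lem:nonresonance-implies-invertible} to solve uniquely for $P_j$. The paper phrases the computation as matching Taylor coefficients of $P\circ F$ against those of $XP$ (see \eqref{eq:poly-induct}), whereas you phrase it as differentiating \eqref{eq:poly-conjugacy} $j$ times at the origin, but these are the same calculation; the reality argument is likewise the same in substance (the paper argues $\bar P_i$ also solves \eqref{eq:poly-induct} and invokes uniqueness, which is exactly what justifies your claim that $T_j$ restricts to an automorphism of the real symmetric tensors).
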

	
   \begin{Rem}
    We state and prove Lemma \ref{lem:existence-approx-conj} for the case of finite $k$ only and rely on a bootstrapping method to prove the existence portion of Theorem \ref{th:main-thm} for the case $k = +\infty$ at the end of this section.
    We believe it is possible to prove a $C^\infty$ version of the existence portion of Lemma \ref{lem:existence-approx-conj} (loosely speaking) using the fact that for every formal power series there exists a $C^\infty$ function with matching derivatives \cite[p.~34]{nelson1970topics}, but we did not attempt to do this.   	
   \end{Rem}	
	
	\begin{proof}
		By Lemma \ref{lem:nonresonance-implies-invertible}, the linear operator
        \begin{equation}\label{eq:t-penult-lemma}
        T_i \colon \Ll((\R^n)^{\otimes i},\C^m) \to \Ll((\R^n)^{\otimes i},\C^m), \qquad T_i(P_i)\coloneqq P_i (\D_0 F)^{\otimes i} - X P_i
        \end{equation}	
        is invertible for all $1 < i \leq k$.
        Denoting by $\Sym^i((\R^n)^{\otimes i},\C^m)\subset \Ll((\R^n)^{\otimes i},\C^m)$ the linear subspace corresponding to \emph{symmetric} multilinear maps $(\R^n)^{i}\to \C^m$ via the universal property of the tensor product, we see from \eqref{eq:t-penult-lemma} that $T(\Sym^i((\R^n)^{\otimes i},\C^m))\subset \Sym^i((\R^n)^{\otimes i},\C^m)$.
        Invertibility of $T_i$ and dimension counting imply the opposite inclusion, so $T_i$ restricts to a well-defined linear automorphism of $\Sym^i((\R^n)^{\otimes i},\C^m)$.
        
		By Taylor's theorem we may uniquely write $F$ as a degree-$k$ symmetric polynomial plus remainder, $F(x) = \sum_{i=1}^k F_i\cdot x^{\otimes i} + R_1$, where $F_1 = \D_0 F$ and $\D_0^i R_1 = 0$ for all $0\leq i \leq k$.
		Defining $F_{\otimes [j]}\coloneqq F_{j_1}\otimes \cdots \otimes F_{j_\ell}$  for any multi-index $j\in \N_{\ell \geq 1}$ and using the notation $|j|\coloneqq j_1+\cdots + j_\ell$,  \eqref{eq:poly-conjugacy} is equivalent to
		\begin{equation}\label{eq:poly-replace-with-evaluation}
		\sum_{\ell=1}^k P_\ell \cdot\sum_{\substack{j \in \N_{\geq 1}^\ell\\|j|\leq k}}  F_{\otimes[j]} \cdot x^{\otimes |j|}  = X \sum_{\ell=1}^k P_\ell \cdot x^{\otimes \ell},
		\end{equation}
		where $j=|(j_1,\ldots,j_\ell)| = \sum_{i=1}^\ell j_i$, $P(x) = \sum_{\ell=1}^k P_\ell \cdot x^{\otimes \ell}$, and $P_1 = B$.
		It follows from an inductive argument (equating coefficients of $x^{\otimes i}$) that \eqref{eq:poly-conjugacy} is equivalent to
		\begin{equation}\label{eq:poly-replace-with-evaluation-rewrite}
		\forall i \in \{1,\ldots, k\}\colon \left(\sum_{\ell=1}^i P_\ell \sum_{\substack{j \in \N_{\geq 1}^\ell\\|j|=i}}  F_{\otimes[j]}\right) \cdot x^{\otimes i}  = X P_i \cdot x^{\otimes i}.
		\end{equation}		
		If we require that all tensors $P_\ell$ are symmetric then, for each fixed $i$, the two tensors acting on $x^{\otimes i}$ in \eqref{eq:poly-replace-with-evaluation-rewrite} are symmetric.
		Since symmetric tensors are completely determined by their action on all tensors of the form $x^{\otimes i}$ \cite[Thm~1]{thomas2014polarization}, it follows that \eqref{eq:poly-conjugacy} is equivalent to
		\begin{equation}\label{eq:poly-replace}
		\begin{split}
		\forall i \in \{1,\ldots, k\}\colon   \sum_{\ell=1}^i P_\ell \sum_{\substack{j \in \N_{\geq 1}^\ell\\|j|=i}}  F_{\otimes[j]}&= X P_i	
		\end{split}
		\end{equation}
		or, after rearranging terms, 
		\begin{equation}\label{eq:poly-induct}
		\begin{split}
		\forall i \in \{1,\ldots, k\}\colon   \sum_{\ell=1}^{i-1}P_\ell \sum_{\substack{j \in \N_{\geq 1}^\ell\\|j|=i}}  F_{\otimes[j]}&=  \underbrace{X P_i - P_i (\D_0 F)^{\otimes i}}_{-T_i(P_i)}	
		\end{split}
		\end{equation}
		since $\D_0 F = F_1$.
		By our assumptions, $X B- B\D_0 F = 0$ and $P_1 = \D_0 P = B$.
        Moreover, as discussed above, $T_i|_{\Sym^i((\R^n)^{\otimes i},\C^m)}$ is a well-defined linear automorphism of the subspace $\Sym^i((\R^n)^{\otimes i},\C^m)$.	
        Thus, \eqref{eq:poly-induct} can be inductively solved for the tensors $P_1,\ldots, P_k$, and the preceding sentence implies that these solutions are unique and symmetric.
        Thus, so is $P$.		
		
		Finally, assume that $X\in \R^{m\times m}$ and $B\in \R^{m\times n}$ are real, and assume by induction that $B = P_1, P_2,\ldots, P_{i-1}$ are real.
		Taking the complex conjugate of \eqref{eq:poly-induct}, we see that $P_i$ solves \eqref{eq:poly-induct} if and only if its complex conjugate $\bar{P}_i$ solves \eqref{eq:poly-induct}.
		Invertibility of $T_i$ thus implies that $P_i = \bar{P}_i$, so $P_i$ is real.
		By induction, $P_1,\ldots, P_k$ are real.
		Thus, so is $P$.
		This completes the proof. 
	\end{proof}	
	
	\begin{Lem}[Making approximate linearizing factors exact]\label{lem-make-approx-exact}
		Fix $k \in \N_{\geq 1} \cup \{+\infty\}$, $0 \leq \alpha \leq 1$, and let $F\colon \R^n \to \R^n$ be a $\CLKA$ diffeomorphism such that the origin is a globally attracting hyperbolic fixed point for the dynamical system defined by iterating $F$, where $n\geq 1$.
		Let $m\in \N_{\geq 1}$ and $e^{A} \in \GL(m,\C)$ satisfy $\nu(e^{A},\D_0 F) < k + \alpha$,
		and assume that there exists $P\in \CLKA(\R^n,\C^m)$  such that $$P\circ F = e^{A} P + R,$$
		where $R\in \CLKA(\R^n,\C^m)$ satisfies  $\D_0^i R = 0$ for all integers $0\leq i < k+\alpha$ (note the case $\alpha = 0$ which does not require vanishing of the $k$-th derivative).

		Then there exists a unique $\varphi \in \CLKA(\R^n,\C^m)$ such that $\D_0^i \varphi=0$ for all integers $0 \leq i < k + \alpha$ and such that $\psi\coloneqq P + \varphi$ satisfies $$\psi \circ F = e^{A} \psi.$$
		In fact, $$\psi = \lim_{j\to \infty}e^{-jA}P\circ F^j$$ in the topology of $C^{k,\alpha}$-uniform convergence on compact subsets of $\R^n$  if $k < +\infty$, and in the topology of $C^{k'}$-uniform convergence on compact subsets of $Q$ for any $k'\in \N_{\geq 1}$ if $k=+\infty$.
		Furthermore, if $e^{A}\in \GL(m,\R)$ is real and $P\in \CLKA(\R^n,\R^m)$ is real, then $\varphi,\psi \colon \R^n \to \R^m \subset \C^m$ are real.				
	\end{Lem}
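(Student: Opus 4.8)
The plan is to reduce the statement to a contraction‑mapping argument on a small ball around the origin and then globalize, following the template of Sternberg's linearization proof \cite{sternberg1957local} and the parameterization method \cite{cabre2003parameterization1}.

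\textbf{Localization.} Substituting $\psi = P+\varphi$ into $\psi\circ F = e^{A}\psi$ and using $P\circ F = e^{A}P+R$ shows that $\varphi$ must be a fixed point of the affine map $S(\varphi)\coloneqq e^{-A}R + e^{-A}(\varphi\circ F)$, with linear part $L\varphi\coloneqq e^{-A}(\varphi\circ F)$. Since $\rho(\D_0 F)<1$, I would fix adapted norms on $\R^n$ and $\C^m$ (making $\norm{\D_0 F}$ and $\norm{e^{-A}}$ as close as desired to $\rho(\D_0 F)$ and $\rho(e^{-A})$) and then a small closed ball $B\ni 0$ with $\sup_{x\in B}\norm{\D_x F}\le a$ for some $a<1$ near $\rho(\D_0 F)$; the mean value theorem gives $\norm{F(x)}\le a\norm{x}$ and $\Lip(F|_B)\le a$, so $F(B)\subseteq \interior B$. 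Let $X\subset C^{k,\alpha}(B,\C^m)$ be the closed subspace of functions whose $i$-th derivative at $0$ vanishes for every integer $0\le i<k+\alpha$; this is a Banach space, $R|_B\in X$ by hypothesis, and the higher‑order chain rule shows $\varphi\circ F\in X$ whenever $\varphi\in X$, so $S\colon X\to X$ is a well‑defined affine self‑map.

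\textbf{The contraction estimate --- the main obstacle.} The crux is to show $\norm{L}_{C^{k,\alpha}(B)}<1$ after shrinking $B$. Expanding $\D^j(L\varphi)$ via Fa\`{a} di Bruno, there is exactly one ``top‑order'' term, namely the contribution of $e^{-A}\big((\D^k\varphi)\circ F\big)(\D F)^{\otimes k}$ to the $C^{k,\alpha}$ norm; using the adapted norms together with $\sup_B\norm{\D F}\le a$ and $\Lip(F|_B)\le a$, this contribution is at most $\norm{e^{-A}}\,a^{k+\alpha}\norm{\varphi}_{C^{k,\alpha}(B)}$ (when $\alpha>0$ the factor $a^\alpha$ comes from the H\"older seminorm of $(\D^k\varphi)\circ F$; when $\alpha=0$ it is simply $a^k=a^{k+\alpha}$). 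Crucially, $\nu(e^{A},\D_0 F)<k+\alpha$ unwinds to $\min_{\mu\in\textnormal{spec}(e^A)}|\mu|>\rho(\D_0 F)^{k+\alpha}$, i.e.\ $\rho(e^{-A})\,\rho(\D_0 F)^{k+\alpha}<1$, so with adapted norms $\norm{e^{-A}}\,a^{k+\alpha}<1$. Every remaining Fa\`{a} di Bruno term involves a derivative $\D^i\varphi$ of order strictly below that to which the jet of $\varphi$ vanishes at $0$ (possibly paired with a higher‑order derivative of $F$); since $\varphi$ and enough of its derivatives vanish at $0$, Taylor's theorem bounds each such term by $o(1)\norm{\varphi}_{C^{k,\alpha}(B)}$ as the radius of $B$ shrinks, while the derivatives of $F$ on $B$ stay bounded. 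Shrinking $B$ thus yields $\norm{L}<1$; this is the only place the adapted norm is essential.

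\textbf{Picard iteration, globalization, uniqueness, and $k=\infty$.} The contraction mapping theorem gives a unique fixed point $\varphi_B\in X$, and a one‑line induction shows the iterates $\varphi^{(0)}\coloneqq 0$, $\varphi^{(j+1)}\coloneqq S(\varphi^{(j)})$ satisfy $P+\varphi^{(j)}=e^{-jA}P\circ F^j$, so $e^{-jA}P\circ F^j\to P+\varphi_B$ in $C^{k,\alpha}(B)$. Since $0$ is globally attracting, $\R^n=\bigcup_{N\ge 0}F^{-N}(\interior B)$, and I would define $\psi$ on $F^{-N}(\interior B)$ by $\psi\coloneqq e^{-NA}(P+\varphi_B)\circ F^N$; positive invariance of $B$ and the local identity $(P+\varphi_B)\circ F=e^{A}(P+\varphi_B)$ on $B$ make this independent of $N$, hence a well‑defined $\CLKA$ map (each piece is a composition and product of $\CLKA$ maps, as in \cite{lan2013linearization,kvalheim2018global}) solving $\psi\circ F=e^{A}\psi$, and $\varphi\coloneqq\psi-P$ has the required vanishing jet because $\varphi=\varphi_B$ on $\interior B$. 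For compact $K$, choosing $N$ with $F^N(K)\subset\interior B$ and factoring $e^{-jA}P\circ F^j|_K=e^{-NA}\big(e^{-(j-N)A}P\circ F^{j-N}\big)\circ F^N|_K$ yields the claimed $C^{k,\alpha}$‑convergence on $K$, using continuity of precomposition with the fixed $\CLKA$ map $F^N$. Uniqueness: if $\psi'=P+\varphi'$ satisfies the hypotheses, then $\varphi'|_B\in X$ is a fixed point of $S$, so $\varphi'|_B=\varphi_B$ and hence $\psi'=e^{-NA}\psi'\circ F^N=\psi$ on each $F^{-N}(\interior B)$. If $e^{A}$ and $P$ are real then $R$, $S$ and all the iterates are real, so $\varphi,\psi$ are real (equivalently, $\bar\psi$ solves the same problem). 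Finally, for $k=\infty$ the bound $\nu(e^A,\D_0 F)<\infty$ is automatic and $R$ vanishes to infinite order; applying the finite‑order case with exponents $(k',0)$ for every integer $k'>\nu(e^A,\D_0 F)$ and invoking uniqueness produces a single function that is $C^{k'}$ for all such $k'$, hence $C^\infty$, with convergence in each $C^{k'}$ norm and therefore in $C^\infty$ uniformly on compact sets.
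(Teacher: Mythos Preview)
Your proposal is correct and follows essentially the same route as the paper: adapted norms to force $\norm{e^{-A}}\norm{\D_0 F}^{k+\alpha}<1$, a contraction on the closed subspace of $\CKA(B,\C^m)$ consisting of functions with vanishing $(k{+}\alpha)$-jet at $0$ (your $X$ is the paper's $\F$, your $L$ is the paper's $T$), Fa\`a di Bruno with the small-ball estimates \eqref{eq:norm-k-minus-one-bound}--\eqref{eq:norm-k-bound} to kill the lower-order terms, globalization by pulling back along $F^N$, and bootstrapping for $k=\infty$. The only cosmetic difference is that you phrase the local step as Picard iteration of the affine map $S$ and verify directly that the iterates are $e^{-jA}P\circ F^j-P$, whereas the paper solves $(\id-T|_\F)\tilde\varphi = T(P|_B)-P|_B$ by Neumann series and then telescopes the partial sums to the same expression; these are the same computation.
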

		
	\begin{proof}
		We first assume that $k<+\infty$ and delay consideration of the case $k = +\infty$ until the end of the proof.
		
		\textit{Adapted norms.}
		Later in the proof we will require that the following bound on operator norms holds (needed following \eqref{eq:contract-holder-coeff}):
		\begin{equation}\label{eq:existence-spread-rewritten}
        \norm{e^{-A}}\norm{\D_0 F}^{k+\alpha}< 1.
        \end{equation}
		Due to our assumption that $\nu(e^A,\D_0 F) < k +\alpha$, this bound can always be made to hold by using an appropriate choice of ``adapted'' norms (which induce the operator norms) on the underlying vector spaces $\R^n$ and $\C^m$, and so we may (and do) assume that \eqref{eq:existence-spread-rewritten} holds in the remainder of the proof.	
		
		But first we argue that such norms can indeed be chosen. 
		Let $\lambda \in \textnormal{spec}(\D_0 F)$ and $\mu\in \textnormal{spec}(e^{A})$ be the eigenvalues of $\D_0 F$ and $e^{A}$ with largest and smallest modulus, respectively.
		For any $\kappa > 0$, there exist adapted norms  (both denoted by $\norm{\cdot}$) on $\R^n$ and $\C^m$ having the property that the induced operator norms $\norm{e^{-A}}$ and $\norm{\D_0 F}$ satisfy   \cite[pp.~279--280]{hirsch1974differential}, \cite[Sec.~A.1]{cabre2003parameterization1}: 
		\begin{equation}\label{eq:adapted-norm}
		|\norm{\D_0 F}  - |\lambda|| \leq \kappa, \qquad \left|\norm{e^{-A}} - |\mu|^{-1}\right| \leq \kappa.
		\end{equation}
        Now since $\nu(e^{A},\D_0 F) < k+\alpha$ and since $|\lambda|<1$, it follows from \eqref{eq:spread-def} that $|\mu|^{-1}|\lambda|^{k+\alpha}<1$.
        The inequalities \eqref{eq:adapted-norm} imply that $\norm{e^{-A}}\norm{\D_0 F}^{k+\alpha}$ can be made arbitrarily close to $|\mu|^{-1}|\lambda|^{k+\alpha}$ by making $\kappa$ small, so choosing $\kappa$ sufficiently small yields \eqref{eq:existence-spread-rewritten} as claimed.
        For later use we also note  \eqref{eq:adapted-norm} implies that $\D_0 F$ is a strict contraction if $\kappa$ is small enough since $|\lambda| < 1$. 
        This in turn implies that
        \begin{equation}\label{eq:B-pos-invariant}
        F(B)\subset B
        \end{equation}
        if $B\subset \R^n$ is a sufficiently small (adapted norm) ball centered at the origin \cite[p.~281]{hirsch1974differential}. 
        We henceforth assume this is the case.

		\textit{Definition of function spaces.}
        Let $U \subset \R^n$ be a precompact open set and denote by $\bar{U} = \textnormal{cl}(U)$ its compact closure.		
		Given any Banach space $X$ and $k\in \N_{\geq 0}$, let $C^k(\bar{U},X)$ be the space of continuous functions $G\colon \bar{U}\to X$ having partial derivatives of order less than or equal to $k$ which are uniformly continuous on $U$, in which case they extend to continuous functions on $\bar{U}$.
		We equip $C^k(\bar{U},X)$ with the standard norm
		\begin{equation*}
		\norm{G}_k\coloneqq \sum_{i=0}^k \sup_{x\in U}\norm{\D_x^i G}
		\end{equation*}
		making $C^k(\bar{U},X)$ into a Banach space  \cite{de1999regularity}.
		For a Banach space $Y$ and $0< \alpha \leq 1$, we define the $\alpha$-H\"{o}lder constant $[H]_\alpha$ of a map $H\colon \bar{U}\to Y$ via
		\begin{equation*}
		[H]_\alpha \coloneqq \sup_{\substack{x,y\in U\\x \neq y}}\frac{\norm{H(x) - H(y)}}{\norm{x-y}^\alpha}.
		\end{equation*}
		For $0 < \alpha \leq 1$ we let
		$\CKA(\bar{U},X)$ be the subset of functions $G\in C^k(\bar{U},X)$ for which $[\D^k G]_\alpha<+\infty$, and we equip $\CKA(\bar{U},X)$ with the standard norm 
		\begin{equation}\label{eq:cka-norm-def}
		\norm{G}_{k,\alpha}\coloneqq \norm{G}_k + [\D^k G]_\alpha 
		\end{equation}
		making $\CKA(\bar{U},X)$ into a Banach space \cite{de1999regularity}.\footnote{Different $C^k$ and $\CKA$ norms are actually used in \cite[Def.~2.1,~Def.~2.5]{de1999regularity}, namely, $\max_{0\leq i\leq k} \norm{\D^i G}_0$ and $\max(\max_{0\leq i \leq k} \norm{\D^i G}_0, [\D^k G]_\alpha)$, but these two norms are \concept{equivalent} (in the sense of norms) to the corresponding norms we have chosen.}
		For $\alpha = 0$, we identify $C^{k,0}(\bar{U},X)$ with $C^k(\bar{U},X)$ and make the special definition $$\norm{\slot}_{k,0}\coloneqq \norm{\slot}_k.$$
		
		In what follows, let $B \subset \R^n$ be a closed ball centered at the origin and let $\F\subset  \CKA(B,\C^m)$ denote the subspace of functions $\varphi$ such that $\D_0^i\varphi = 0$ for all integers $0\leq i < k+\alpha$;
		$\F$ is a closed linear subspace of $\CKA(B,\C^m)$, hence also a Banach space.

		\textit{Preliminary estimates.}
		For any $\varphi\in \F$, $x\in B$, and all integers $1\leq i< k+\alpha$, we have that $\norm{\D^{i-1}_x\varphi}\leq \norm{x}\cdot \int_0^1 \norm{\D^{i}_{tx}\varphi}\, dt$ by the fundamental theorem of calculus, the chain rule, and the definition of $\F$.
		The derivatives $\D^i\varphi$ vanish at $x=0$ for all integers $0\leq i < k+\alpha$ by the definition of $\F$, so the preceding sentence and an induction argument imply that, for any $\epsilon > 0$, if the radius of $B$ is sufficiently small then for any $\varphi \in \F$:
		\begin{equation}\label{eq:norm-k-minus-one-bound}
		\begin{split}
		\norm{\varphi}_{k-1} &\leq \epsilon\norm{\D^k\varphi}_0\\
		\norm{\varphi}_k &\leq (1+\epsilon)\norm{\D^k\varphi}_0.
		\end{split}
		\end{equation}
		If $\alpha > 0$, the additional fact that $\norm{\D^k_x \varphi}\leq \norm{x} [\D^k\varphi]_\alpha$ further implies that
		\begin{equation}\label{eq:norm-k-bound}
		\begin{split}
		\norm{\varphi}_{k} &\leq \epsilon [\D^k \varphi]_{\alpha}\\
		\norm{\varphi}_{k,\alpha}&\leq (1+\epsilon) [\D^k \varphi]_{\alpha}
		\end{split}
		\end{equation}
		if the radius of $B$ is sufficiently small.

		\textit{Defining a linear contraction mapping on $\F$.}
		Recall that $F\colon \R^n\to \R^n$ is the diffeomorphism from the statement of the lemma.
		By \eqref{eq:B-pos-invariant}, all sufficiently small closed (adapted norm) balls $B\subset \R^n$ centered at the origin satisfy $F(B)\subset B$.
		Additionally, since $F\in \CLKA(\R^n,\R^n)$ and $B$ is compact, $F|_B \in \CKA(B,\R^n)$.
		It follows that there is a well-defined linear operator $T\colon \CKA(B,\C^m)\to \CKA(B,\C^m)$ given by\footnote{That $\D^k T(\varphi)$ is $\alpha$-H\"older follows from the chain rule, the fact that the first $k-1$ derivatives of $F$ and of $\varphi$ are $C^1$ and hence Lipschitz (hence also $\alpha$-H\"{o}lder), the fact that the composition of an $\alpha$-H\"older function with a Lipschitz function is again $\alpha$-H\"older, and the fact that the product of bounded $\alpha$-H\"older functions is again $\alpha$-H\"{o}lder (see, e.g., \cite[Lem~1.19]{eldering2013normally}).\label{foot:holder-function-properties}} 
		\begin{equation}
		T(\varphi)\coloneqq e^{-A}\varphi\circ F|_B.
		\end{equation} 
		Note that $T(\F)\subset \F$, so that $\F$ is an invariant subspace for $T$.
		We claim that there is a choice of $B$ so that $T|_\F\colon \F \to \F$ is a (strict) contraction with constant $\beta < 1$:
		\begin{equation}\label{eq:T-contract}
		\forall \varphi\in \F\colon \norm{T(\varphi)}_{k,\alpha}\leq \beta \norm{\varphi}_{k,\alpha}.
		\end{equation} 
		To show this we give an argument essentially due to Sternberg, but which generalizes the proof of \cite[Thm~2]{sternberg1957local} to the case of linearizing semiconjugacies and to the $\CKA$ setting (allowing $\alpha > 0$).
		Using the notation $\D^{\otimes[j]}_x F\coloneqq \D^{j_1}_x F \otimes \cdots \otimes \D^{j_i}_x F$ for a multi-index $j\in \N^i_{\geq 1}$, we compute 
		\begin{equation}\label{eq:existence-k-der-estimate}
		\D_x^k (T(\varphi)) =  e^{-A} \D^k_{F(x)} \varphi \cdot (\D_x F)^{\otimes k} + e^{-A} \sum_{i=1}^{k-1}\sum_{\substack{j \in \N^i_{\geq 1}\\|j|= k}} C_{i,j} \D^i_{F(x)}\varphi \cdot \D^{\otimes[j]}_x F,
		\end{equation}
		where the integer coefficients $C_{i,j}\in \N_{\geq 1}$ are combinatorially determined by Fa\`{a} di Bruno's formula for the ``higher-order chain rule'' and are therefore independent of $B$.\footnote{The ``higher-order chain rule,'' also known as Fa\`{a} di Bruno's formula, gives a general expression for higher-order derivatives of the composition of two functions (see \cite{jacobs2014stare} for an exposition).} 
		We choose $B$ sufficiently small that its diameter is less than $1$ and note that, by continuity and compactness of $\{\norm{x}\leq 1\}$ and the fact that $F\in \CLKA$, there exists a constant $N_0 > 0$ such that\footnote{Since we have not yet chosen $B$ (other than stipulating that its diameter be smaller than $1$), to avoid circular reasoning we are using $\{\norm{x}\leq 1\}$ in place of $B$ in \eqref{eq:fa-di-sum-bound} to make clear that the estimate holds for \emph{all} closed balls $B\subset \{\norm{x}\leq 1\}$ centered at $0$.} 
		\begin{equation}\label{eq:fa-di-sum-bound}
		\sum_{i=1}^{k-1}\sum_{\substack{j \in \N^\ell_{\geq 1}\\|j|= k}} C_{i,j}\cdot  \left[ \left(\sup_{\norm{x} \leq  1} \norm{\D^{\otimes[j]}_x F}\right)\left( 1 + \sup_{\norm{x} \leq  1}\norm{\D_x F}^\alpha\right) + \sup_{\substack{\norm{x},\norm{y}\leq 1\\x \neq y}}\frac{\norm{\D^{\otimes[j]}_x F - \D^{\otimes[j]}_y F}}{\norm{x-y}^\alpha}\right] < N_0.
		\end{equation}
		Using \eqref{eq:norm-k-minus-one-bound} and \eqref{eq:fa-di-sum-bound} to bound the sum in \eqref{eq:existence-k-der-estimate},  it follows that
		\begin{equation}\label{eq:contract-k-deriv}
		\norm{\D^k T(\varphi)}_0 \leq \norm{e^{-A}}(\norm{\D F}^k_0 + \epsilon N_0)\norm{\D^k \varphi}_0.
		\end{equation}
		
		For the case that $\alpha > 0$, we will now use \eqref{eq:norm-k-bound} to obtain a bound on $[\D_x^k T(\varphi)]_\alpha$ analogous to \eqref{eq:contract-k-deriv}.
		In order to do this, we use the estimate $[x\mapsto \D^k_{F(x)}\varphi]_\alpha\leq [\D^k \varphi]_\alpha\norm{\D F}_0^\alpha$ and the product rule $[fg]_\alpha \leq \norm{f}_0 [g]_\alpha + [f]_\alpha \norm{g}_0$ for H\"older constants (see, e.g., \cite[Lem~1.19]{eldering2013normally}) to bound the first term of \eqref{eq:existence-k-der-estimate} by $$\norm{e^{-A}}\left([\D^k \varphi]_\alpha\norm{\D F}_0^{k+\alpha} + \norm{\D^k \varphi}_0 [(\D F)^{\otimes k}]_\alpha\right) \leq \norm{e^{-A}}\left(\norm{\D F}_0^{k+\alpha} + \epsilon [(\D F)^{\otimes k}]_\alpha\right)[\D^k\varphi]_{\alpha},$$ where we have used \eqref{eq:norm-k-bound} to bound the second term in parentheses on the left side.
		Next, we use \eqref{eq:norm-k-bound}, \eqref{eq:fa-di-sum-bound}, the product rule for H\"older constants again, and for $1\leq i \leq k-1$ the estimates $[x\mapsto \D^i_{F(x)}\varphi]_\alpha\leq [\D^i \varphi]_\alpha\norm{\D F}_0^\alpha \leq \epsilon [\D^k \varphi]_\alpha\norm{\D F}_0^\alpha$ to bound the second term of \eqref{eq:existence-k-der-estimate} by $\epsilon N_0 \norm{e^{-A}}[\D^k \varphi]_\alpha$.
		This last estimate we used follows from \eqref{eq:norm-k-bound} and the fact that we are requiring $B$ to have diameter less than $1$, so that $[\D^i \varphi]_\alpha \leq \norm{\D^{i+1}\varphi}_0\leq \epsilon [\D^k \varphi]_\alpha$.
		We finally obtain
		\begin{align}\label{eq:contract-holder-coeff}
		\left[\D_x^k T(\varphi)\right]_\alpha 
		&\leq \norm{e^{-A}}\left(\norm{\D F}_0^{k+\alpha} + \epsilon [(\D F)^{\otimes k}]_\alpha + \epsilon N_0\right)[\D^k\varphi]_\alpha.
		\end{align}
		
		The estimate		 \eqref{eq:existence-spread-rewritten} and continuity imply that $\norm{e^{-A}}\norm{\D F}_0^{k+\alpha} < 1$ if $B$ is sufficiently small.
		Hence if $\epsilon$ is sufficiently small, the quantities respectively multiplying $\norm{\D^k\varphi}_0$ and $[\D^k \varphi]_\alpha$ in \eqref{eq:contract-k-deriv} and \eqref{eq:contract-holder-coeff} will be bounded above by some positive constant $\beta' < 1$.
        The discussion preceding \eqref{eq:norm-k-minus-one-bound} and \eqref{eq:norm-k-bound} implies that we can indeed take $\epsilon$ this small after possibly further shrinking $B$, so it follows that  $\norm{\D^k T(\varphi)}_0 < \beta' \norm{\D^k \varphi}_0$ and, if $\alpha > 0$, also $[D^k T(\varphi)]_\alpha < \beta' [\D^k \varphi]_\alpha$.
        We therefore obtain a contraction estimate on the highest derivative and its H\"{o}lder constant (if $\alpha > 0$) \emph{only}.
		However, we can combine this observation with the second inequalities from each of the two displays \eqref{eq:norm-k-minus-one-bound} and \eqref{eq:norm-k-bound}, together with the fact that $T(\F)\subset \F$, to obtain in both cases ($\alpha = 0$ and $\alpha > 0$) the following estimate involving \emph{all} of the derivatives:
		\begin{equation}
		\norm{T(\varphi)}_{k,\alpha}  \leq (1+\epsilon)\beta' \norm{\varphi}_{k,\alpha}.
		\end{equation}	
		(This technique for the case $\alpha = 0$ was also used in the proof of \cite[Thm 2]{sternberg1957local}.)
		Define $\beta \coloneqq (1+\epsilon)\beta'$.
		Since $\beta' < 1$, if necessary we may shrink $B$ further to ensure that $\epsilon$ may be taken sufficiently small that $\beta < 1$.
		It then follows that $T|_\F$ is a (strict) contraction; this completes the proof of \eqref{eq:T-contract}.
		
		\textit{Existence and uniqueness of a linearizing factor defined on $B$.}
		We will now find a locally-defined (i.e., defined on $B$ rather than on all of $\R^n$) linearizing factor $\tilde{\psi}\in \CKA(B,\C^m)$ of the form $\tilde{\psi} = P|_B + \tilde{\varphi}$, where $\tilde{\varphi}\in \F$ and $P\colon \R^n \to \C^m$ is as in the statement of the lemma. 	    
	    By definition, $\tilde{\psi}$ is linearizing if and only if $\tilde{\psi} = e^{-A} \tilde{\psi}\circ F|_B = T(\tilde{\psi})$, so we need to solve the equation $
	    P|_B + \tilde{\varphi} = T(P|_B + \tilde{\varphi})$ for $\tilde \varphi\in \F$.
		(We are writing $P|_B$ rather than $P$ because $\tilde{\varphi}$ is a function with domain $B$ rather than $\R^n$, and also because $T$ is a linear operator defined on functions with domain $B$.)	    
	    Since $T$ is linear, after rearranging we see that this amounts to solving
	    \begin{equation}\label{eq:tilde-phi-1}
	    \left(\id_{\CKA(B,\C^m)} - T\right)\tilde{\varphi} = T(P|_B) -P|_B.
	    \end{equation}
	    One of the assumptions of the lemma directly implies that $\left(P\circ F - e^A P\right)|_B \in \F$, and this implies that the right hand side of \eqref{eq:tilde-phi-1} belongs to $\F$ since $e^{-A} \cdot \F \subset \F$.
	    Since $T(\F)\subset \F$, it follows that we may rewrite \eqref{eq:tilde-phi-1} as
	    \begin{equation}\label{eq:tilde-phi-2}
        \left(\id_{\F} - T|_\F\right)\tilde{\varphi} = T(P|_B) -P|_B.
        \end{equation}	    
        We showed earlier that $T|_\F$ is a strict contraction, i.e., its operator norm satisfies $\norm{T|_\F}_{k,\alpha} < 1$.
        It follows that $(\id_\F - T|_\F)\colon \F \to \F$ has a bounded inverse given by the corresponding Neumann series, so that \eqref{eq:tilde-phi-2} has a unique solution $\tilde{\varphi}$ given by
        \begin{equation}\label{eq:tilde-phi-neumann}
        \tilde{\varphi} = \left(\id_{\F} - T|_\F\right)^{-1} \cdot \left(T(P|_B) -P|_B\right) = \sum_{n=0}^\infty (T|_\F)^n \cdot \left(T(P|_B) -P|_B\right),
        \end{equation}
        and $\tilde{\psi}\coloneqq P|_B + \tilde{\varphi}$ satisfies $\tilde{\psi} = e^{-A} \tilde{\psi}\circ F|_B = T(\tilde{\psi})$ as discussed above.
		
		\textit{Extension to a unique global linearizing factor.}
		Since $x_0$ is globally asymptotically stable and since $B$ is positively invariant, for every $x\in B$ there exists $j(x)\in \N_{\geq 0}$ such that, for all $j > j(x)$, $F^j(x)\in \interior(B)$.
		If $j$ is large enough that $F^j(x)\in \interior(B)$ and $\ell > j$, then $$e^{-\ell A}\tilde{\psi}( F^\ell(x)) = e^{-jA}\left( e^{(j-\ell)A} \tilde{\psi} \circ F^{(\ell-j)}|_B\right) (F^{j}(x))= e^{-jA} \left(T^{(\ell-j)}(\tilde{\psi})\right) (F^{j}(x)) = e^{-jA} \tilde{\psi} (F^j(x)),$$ so there is a well-defined map $\psi\colon \R^n\to \C^m$ given by
		\begin{equation}\label{eq:psi-const}
		\psi(x)\coloneqq e^{-j A}\tilde{\psi}( F^{j}(x)),
		\end{equation}
		where $j\in \N_{\geq 0}$ is any nonnegative integer sufficiently large that $F^j(x)\in \interior(B)$.
		Since $\tilde{\psi}\circ F|_B = e^A \tilde{\psi}$, it follows from \eqref{eq:psi-const} that $\psi \circ F = e^{A} \psi$.
		If $x \in \R^n$ and $F^j(x)\in \interior(B)$, then $x$ has a neighborhood $U$ with $F^j(U)\subset \interior(B)$ by continuity, so $\psi|_U$ is given by \eqref{eq:psi-const} with $j$ constant on $U$.
		By the chain rule and standard properties of locally $\alpha$-H\"{o}lder functions (see Footnote \ref{foot:holder-function-properties}), this shows that $\psi \in \CLKA(\R^n,\C^m)$.
		From \eqref{eq:psi-const} we see that $\psi$ and hence also $\varphi\coloneqq \psi - P$ are uniquely determined by $\psi|_B = P|_B + \varphi|_B = P|_B + \tilde{\varphi}$, which is in turn uniquely determined by $\tilde{\varphi}$, and since $\tilde{\varphi}$ is unique it follows that $\varphi$ and $\psi$ are also unique.
		If $e^{A}\in \GL(m,\R)$ and $P\in \CLKA(\R^n,\R^m)$ are real, then the complex conjugate $\bar{\psi} = P + \bar{\varphi}$ also satisfies $\bar{\psi} \circ F = e^{A} \bar{\psi}$ with $\bar{\varphi}\in \F$, so uniqueness implies that $\bar{\psi} = \psi$ and hence $\psi, \varphi\colon \R^n\to \R^m\subset \C^m$ are real.
		
		\textit{Convergence to the global linearizing factor.}
        We now complete the proof of the lemma by proving the sole remaining claim that $e^{-jA}P\circ F^j \to \psi$  in the topology of $\CKA$-uniform convergence on compact subsets of $\R^n$.	
        To do this, we first  inspect the finite truncations of the infinite series in \eqref{eq:tilde-phi-neumann}. 
        We see that, since $$\sum_{n=0}^j (T|_\F)^n \cdot \left(T(P|_B) -P|_B\right) = \sum_{n=0}^j T^{n+1}(P|_B) - T^n(P|_B) = T^{j+1}(P|_B) - P|_B$$ for each $j \in \N_{\geq 1}$, taking the limit $j\to\infty$ shows that the series in \eqref{eq:tilde-phi-neumann} is equal to $-P|_B + \lim_{j\to\infty} T^j(P|_B)$, with convergence in the Banach space $C^{k,\alpha}(B,\C^m)$.
		In other words,
		\begin{equation}\label{eq:tilde-psi-converge}
		\tilde{\psi} = \lim_{j\to\infty} e^{-j A} P \circ F^j|_B
		\end{equation}
		with convergence in $C^{k,\alpha}(B,\C^m)$.

       	Next, let $K\subset \R^n$ be the closure of any precompact open set (so that the Banach space $\CKA(K,\C^m)$ is defined as in the text containing \eqref{eq:cka-norm-def}). 
		Since $0$ is globally asymptotically stable and since $B$ contains a neighborhood of $0$, there exists $j_0 > 0$ such that $F^j(K)\subset B$ for all $j \geq j_0$.
		We justify the following computation below:
		\begin{align*}
		\lim_{j\to\infty}e^{-j A}P\circ F^j|_K &= \lim_{j\to \infty}e^{-j_0 A} \left(e^{-j A} P\circ F^j|_B \right)\circ F^{j_0}|_K\\
		&= e^{-j_0 A} \left(\lim_{j\to \infty} e^{-j A} P\circ F^j|_B \right)\circ F^{j_0}|_K\\
		&= e^{-j_0 A} \psi|_B \circ F^{j_0}|_K\\
		&= \psi|_K,
		\end{align*}
		with convergence in $C^{k,\alpha}(K,\C^m)$. 
		Since we are considering convergence in the space $C^{k,\alpha}(K,\C^m)$---rather than mere pointwise convergence---it is not obvious that we can move the limit inside the parentheses to obtain the second equality.
		The reason this is valid is that composition maps $\CKA(K,\C^m)\to \CKA(K,\C^m)$ of the form  $g\mapsto f\circ g \circ h$, where $f\in C^\infty$ and $h\in \CKA(K,\C^m)$, are continuous with respect to the $C^{k,\alpha}$ normed topologies \cite[Prop.~6.1,~Prop.~6.2~(iii)]{de1999regularity}.\footnote{We remark that, for maps between finite-dimensional spaces, there are somewhat weaker assumptions ensuring continuity of such composition maps \cite[Rem.~6.5]{de1999regularity}, but our present situation does not require this.}
        This completes the proof for the case $k < +\infty$.

		\textit{Consideration of the case $k = +\infty$.}
		For the case $k = +\infty$, repeating the proof above for any $1\leq k' < +\infty$ such that $\nu(e^{A},\D_0 F) < k'$ yields  unique $C^{k'}$ functions $\varphi\colon \R^n\to \C^m$ and $\psi\coloneqq P + \varphi$  such that $\D_0^i \varphi = 0$ for all $0\leq i \leq k'-1$.
		By the uniqueness statement already proved for the case $k< +\infty$, these functions $\varphi, \psi$ are independent of $k' > \nu(e^{A},\D_0 F)$, and since $k'$ is arbitrary it follows that $\varphi,\psi \in C^\infty(\R^n,\C^m)$.
		Finally, for the closure $K$ of any precompact open set, we have already shown that $e^{-j A}\circ P \circ F^j|_K \to \psi|_K$ in the Banach space $C^{k'}(K,\C^m)$ for every $k'\in \N_{\geq 1}$, as desired.
		This completes the proof.
	\end{proof}	
	
	Using Lemmas \ref{lem:existence-approx-conj} and \ref{lem-make-approx-exact}, we now complete the proof of Theorem \ref{th:main-thm} by proving the existence portion of its statement.
	
	\begin{proof}[Proof of the existence portion of Theorem \ref{th:main-thm}]
    As in the proof of the uniqueness portion of Theorem \ref{th:main-thm} at the end of \S \ref{sec:main-proof-uniq}, we may assume that $Q  = \R^n$ and $x_0 = 0$.
    We first consider the case that $\Grp = \Z$, and define the time-$1$ map $F\coloneqq \Phi^1$.
    
	First suppose that $k < +\infty$.
	Lemma \ref{lem:existence-approx-conj} implies that there exists a polynomial $P$ such that $\D_0 P = B$ and $P\circ F = e^{A} P + R$, where $R\in \CLKA(\R^n,\C^m)$ satisfies $\D_0^i R = 0$ for all integers $0\leq i < k + \alpha$.\footnote{Actually, Lemma \ref{lem:existence-approx-conj} implies that we can find $P$ such that $\D^i_0 R = 0$ for all integers $0\leq i \leq k$, with the only difference arising when $\alpha = 0$. However, we do not need this in the following.}
	Furthermore, $P$ and $R$ are real if $e^{A}$ and $B$ are real. 
	Lemma \ref{lem-make-approx-exact} then implies that there exists $\varphi\in \CLKA(\R^n,\C^m)$ such that $\psi = P + \varphi\in \CLKA(\R^n,\C^m)$ satisfies $\D_0 \psi = \D_0 P =  B$, $\psi \circ F = e^{A} \psi$, $e^{-jA}\tilde{P} \circ \Phi^j \to \psi$ $C^{k,\alpha}$-uniformly on compact subsets of $\R^n$ for any ``approximate linearizing factor''  $\tilde{P}$ satisfying the hypotheses of Theorem \ref{th:main-thm} (such as $P$), and that $\psi$ is real if $e^{A}$ and $B$ are real.
	This completes the proof for the case $k < +\infty$.
	
	Now suppose that $k = +\infty$.
	Repeating the proof above for finite $k' >  \nu(e^{A},\D_0)$ yields $\psi \in C^{k'}(\R^n,\C^m)$ satisfying $\D_0 \psi = B$ and $\psi \circ F = e^{A} \psi$.
	The uniqueness statement of Theorem \ref{th:main-thm} proved in \S \ref{sec:main-proof-uniq} implies that $\psi$ is independent of $k' > \nu(e^{A},\D_0 F)$, so since $k'$ is arbitrary it follows that $\psi \in C^\infty$.
	Additionally, we have already argued above that $e^{-jA}\tilde{P} \circ \Phi^j \to \psi$ $C^{k'}$-uniformly on compact subsets of $\R^n$ for any $k'\in \N_{\geq 1}$ and any ``approximate linearizing factor''   $\tilde{P}$ satisfying the hypotheses of Theorem \ref{th:main-thm}.
	This completes the proof for the case that $\Grp = \Z$.
	
	It remains only to consider the case that $\Grp = \R$, i.e., the case that $\Phi$ is a flow.
    By the proof of the case $\Grp = \Z$, there exists $\tilde{\psi}\in \CLKA(\R^n,\C^m)$ satisfying $\D_0 \tilde \psi = B$ and $\tilde \psi \circ \Phi^j = e^{jA} \tilde \psi$ for all $j \in \Z$.
    By adapting a technique of Sternberg \cite[Lem~4]{sternberg1957local}, from $\tilde{\psi}$ we will construct a map $\psi\in \CLKA(\R^n,\C^m)$ satisfying $\D_0 \psi = B$ and $\psi \circ \Phi^t = e^{tA} \psi$ for all $t\in \R$.
    In fact, we will show that 
    \begin{equation}
    \psi\coloneqq \int_0^1 e^{-sA}\tilde{\psi}\circ \Phi^s\, ds
    \end{equation}
    has these properties.
    By Leibniz's rule for differentiating under the integral sign and standard properties of locally $\alpha$-H\"{o}lder functions (see Footnote \ref{foot:holder-function-properties}), $\psi \in \CLKA(\R^n,\C^m)$, and using the assumption \eqref{eq:main-th-1}  we have that$$\D_0\psi = \int_0^1 e^{-sA}B \D_0 \Phi^s \, ds = \int_0^1 B\,ds = B.$$
    To prove that $\psi \circ \Phi^t = e^{tA} \psi $ for all $t\in \R$, we compute
    \begin{align*}
    \psi \circ \Phi^t &= \int_0^1 e^{-sA} \tilde{\psi} \circ \Phi^{s+t}\,ds
    = \int_t^{1+t} e^{(t-s)A} \tilde{\psi} \circ \Phi^s \, ds\\
    &= e^{tA} \int_t^1 e^{-sA} \tilde{\psi} \circ \Phi^s \, ds + e^{tA} \int_1^{1+t} e^{-sA} \tilde{\psi} \circ \Phi^s \, ds\\
    &= e^{tA} \int_t^1 e^{-sA} \tilde{\psi} \circ \Phi^s \, ds + e^{tA} \int_1^{1+t} e^{-sA}  \left(e^{A}\tilde{\psi} \circ \Phi^{-1} \right) \circ \Phi^s \, ds\\
    &= e^{tA} \int_t^1 e^{-sA}  \tilde{\psi} \circ \Phi^s \, ds + e^{tA} \int_0^{t} e^{-sA} \tilde{\psi} \circ \Phi^s \, ds\\
    &= e^{tA} \psi
    \end{align*}
    as desired.
    We remark that, since $\psi$ satisfies $\psi \circ \Phi^1 = e^{A} \psi$ and $\D_0 \psi = B$, the uniqueness result for the case $\Grp = \Z$ actually implies the (non-obvious) fact that $\psi = \tilde{\psi}$.
    
    Suppose now that $k < +\infty$.
    Letting $K\subset \R^n$ be the closure of any precompact open set (so that the Banach space $\CKA(K,\C^m)$ is defined as in the text containing \eqref{eq:cka-norm-def}) which is also positively invariant, the map $G\colon [0,1] \times C^{k,\alpha}(K,\C^m)\to C^{k,\alpha}(K,\C^m)$ given by $G(r, f) \coloneqq e^{-r A}f \circ \Phi^r|_K$ is continuous \cite[Thm~6.10]{de1999regularity} and satisfies $G(r,\psi|_K)= \psi|_K$ for all $r\in [0,1]$.
    Thus, compactness of $[0,1]$ implies that, for every neighborhood $V\subset C^{k,\alpha}(K,\C^m)$ of $\psi|_K$, there is a smaller neighborhood $U \subset V$ of $\psi|_K$ such that $G([0,1]\times U) \subset V$, i.e., $e^{-rA} \varphi \circ \Phi^r|_K \subset V$ for every $\varphi \in U$ and $r\in [0,1]$.
    Fix any such neighborhoods $V,U$ and fix  any ``approximate linearizing factor'' $P\in \CLKA(\R^n,\C^m)$ satisfying the hypotheses  of Theorem \ref{th:main-thm}. 
    By the proof for the case $\Grp = \Z$ there exists $N \in \N_{\geq 0}$ such that, for all $j > N$, $e^{-jA}P\circ \Phi^j|_K \in U$.
    By the definition of $U$ it follows that $e^{-tA}P\circ \Phi^t|_K \subset V$ for all $t > N+1$. 
    Since the neighborhood $V \ni \psi|_K$ was arbitrary, this implies that
    \begin{equation}\label{eq:psi-continuous-time-cka-converge}
    \psi|_K = \lim_{t\to \infty} e^{-tA}P\circ \Phi^t|_K
    \end{equation}
    with convergence in the Banach space $\CKA(K,\C^m)$.
    If instead $k = +\infty$, the same argument shows that \eqref{eq:psi-continuous-time-cka-converge} converges in the Banach space $C^{k'}(K,\C^m)$ for every $k'\in \N_{\geq 1}$.
    Since every compact subset of $\R^n$ is contained in the closure $K$ of some positively invariant precompact open set (e.g., a sublevel set of a smooth Lyapunov function \cite{wilson1969smooth,fathi2019smoothing}), this proves that $e^{-tA}P\circ \Phi^t \to \psi$ in the topology of $C^{k,\alpha}$-uniform convergence on compact sets if $k < +\infty$, and in the topology of $C^{k'}$-uniform convergence on compact sets for any $k'\in \N_{\geq 1}$ if $k=+\infty$. 
    This completes the proof of Theorem \ref{th:main-thm}.    
	\end{proof}	
	
	\subsection{Proof of Theorem \ref{th:main-thm-per}}
	In this section we prove Theorem \ref{th:main-thm-per}, which we repeat here for convenience.
	This proof invokes Theorem \ref{th:main-thm} and is much shorter because of this.
	
    \ThmMainPer*	
	
	\begin{proof}
		Let $\Ws_{x_0}$ be the global strong stable manifold (isochron) through $x_0$ \cite[Sec.~2.1]{kvalheim2018global}.
		Since $\Ws_{x_0}$ is the stable manifold for the fixed point $x_0$ of the $\CLKA$ diffeomorphism $\Phi^\tau$, it follows that $\Ws_{x_0}$ is a $\CLKA$ submanifold \cite[pp.~2, 27; Thm~6.1]{ruelle1989elements} which is properly embedded (rather than merely immersed) in $Q$ because $\Gamma$ is stable \cite[pp.~4208--4209]{kvalheim2018global}.
		
		After identifying $E^s_{x_0}$ with $\R^n$, the uniqueness portion of Theorem \ref{th:main-thm} applied to $\psi|_{\Ws_{x_0}}$ implies that $\psi|_{\Ws_{x_0}}$ is unique for any $\psi$ satisfying the uniqueness hypotheses, and furthermore $\psi|_{\Ws_{x_0}}$ is real if $A$ and $B$ are real.
	    Since $\psi$ is uniquely determined by $\psi|_{\Ws_{x_0}}$ and \eqref{eq:main-th-per-2} (which is true because $Q = \bigcup_{t\in \R}\Phi^t(\Ws_{x_0})$), this implies that $\psi$ is unique and that $\psi$ is real if $A$ and $B$ are real.
	    This completes the proof of the uniqueness statement of Theorem \ref{th:main-thm-per}.
		
		Under the existence hypotheses, the existence portion of Theorem \ref{th:main-thm} similarly implies that there exists a unique $\varphi\in \CLKA(\Ws_{x_0},\C^m)$ 
        satisfying $\D_{x_0}\varphi = B$ and 
        \begin{equation}\label{eq:main-th-per-proof-1}
        \forall j \in \Z\colon \varphi \circ \Phi^{j\tau}|_{\Ws_{x_0}} = e^{j\tau A} \varphi.
        \end{equation}		
		The unique extension of $\varphi$ to a $\CLKA$ function $\psi \colon Q\to \C^m$ satisfying \eqref{eq:main-th-per-2} is given by
        \begin{equation}\label{eq:main-th-per-proof-2}
        \forall t\in \R\colon \psi|_{\Ws_{\Phi^{-t}(x_0)}}\coloneqq e^{-tA} \varphi \circ \Phi^{t}|_{\Ws_{\Phi^{-t}(x_0)}}.
        \end{equation}
        $\psi$ is well-defined because $\Phi^\tau(\Ws_{x_0})= \Ws_{x_0}$ and $e^{\tau A} \varphi \circ \Phi^{-\tau}|_{\Ws_{x_0}} = \varphi$ by \eqref{eq:main-th-per-proof-1}.
        That $\psi\in \CLKA$ follows from considering locally-defined $\CLKA$ ``time-to-impact $\Ws_{x_0}$'' functions as in the proof of Proposition \ref{prop:floq-norm-form}.
        This completes the proof.		 
	\end{proof}

	\section*{Acknowledgments}
	The majority of this work was performed while Kvalheim was a postoctoral researcher at the University of Michigan.
	Both authors were supported by ARO award W911NF-14-1-0573 to Revzen and by the ARO under the Multidisciplinary University Research Initiatives (MURI) Program, award W911NF-17-1-0306 to Revzen.
	Kvalheim was also supported by the ARO under the SLICE
	MURI Program, award W911NF-18-1-0327.
	We thank George Haller, David Hong, Igor Mezi\'{c}, Jeff Moehlis, Corbinian Schlosser, and Dan Wilson for helpful discussions and comments.
	We owe special gratitude to Alexandre Mauroy and Ryan Mohr for their careful reading of the manuscript and valuable feedback; in particular, one of Mauroy's comments led to a sharpening of the uniqueness statements of Theorems \ref{th:main-thm} and \ref{th:main-thm-per} and Propositions \ref{prop:koopman-cka-fix} and \ref{prop:koopman-cka-per}, and Mohr found an error in Definition \ref{def:nonres}.

	\bibliographystyle{amsalpha}
	\bibliography{ref}
	
\end{document}